\newtheorem{mypro}{Proposition}[section]
\newtheorem{myth}[mypro]{Theorem}
\newtheorem{mylem}[mypro]{Lemma}
\newtheorem{mydef}[mypro]{Definition}
\newtheorem{myrem}[mypro]{Remark}
\newtheorem{mycor}[mypro]{Corollary}
\newcommand{\R}{\mathbb{R}}
\newcommand{\N}{\mathbb{N}}
\newcommand{\C}{\mathbb{C}}
\newcommand{\norm}[2]{\|
\hspace{.01in}#1\hspace{.01in}\|
_{#2}}
\DeclareMathOperator*{\esssup}{ess\,sup}
\begin{document}
	%
	%
	%
	%
	\begin{center}
		\sc\large\textbf{}\\
		\vspace{-1em}
		\noindent \hrulefill\\	
		\vspace{-1.1em}
		\noindent \hrulefill\\
	\end{center}
	%
	%
	%
	%
	\begin{center}
		\Large{Estimates for $n$-widths of sets of smooth functions on complex spheres}
	\end{center}
	%
	%
	%
	%
	%
	\noindent {\small 
	Deimer J. J. Aleans\footnote{\texttt{ra162238@ime.unicamp.br}},
	%
	Sergio A. Tozoni\footnote{\texttt{tozoni@ime.unicamp.br\\ The first author was financially supported by Conselho Nacional de Desenvolvimento Científico e Tecnológico (CNPq, Brazil) and  by the Coordenação de Aperfeiçoamento de Pessoal de Nível Superior (CAPES, Brazil).}}	\\
	%
	{\footnotesize
	$^{1}$Instituto de Matem\'atica, Universidade Estadual de Campinas, SP, Brasil\\ 
	%
	$^{2}$Instituto de Matem\'atica, Universidade Estadual de Campinas, SP, Brasil\\
	%
	%
	%
	%
	%
	\begin{abstract}
	In this work we investigate $n$-widths of multiplier operators $\Lambda_*$ and $\Lambda$, defined for functions on the complex sphere 
	$\Omega_d$ of $\C^d$, associated with sequences of multipliers of the type $\{\lambda_{m,n}^*\}_{m,n\in \mathbb{N}}$, $\lambda_{m,n}^*=\lambda(m+n)$ and $\{\lambda_{m,n}\}_{m,n\in \mathbb{N}}$, $\lambda_{m,n}=\lambda(\max\{m,n\})$, respectively,  for a bounded function $\lambda$ defined on $[0,\infty)$. If the operators $\Lambda_{*}$ and $\Lambda$ are bounded from $L^p(\Omega_d)$ into $L^q(\Omega_d)$, $1\leq p,q\leq\infty$, and $U_p$ is the closed unit ball of $L^p(\Omega_d)$, we study lower and upper estimates for the $n$-widths of Kolmogorov, linear, of Gelfand and of Bernstein, of the sets $\Lambda_{*}U_p$ and $\Lambda U_p$ in $L^q(\Omega_d)$. As application we obtain, in particular, estimates for the Kolmogorov $n$-width of classes of Sobolev, of finitely differentiable, infinitely differentiable and analytic functions on the complex sphere, in $L^q(\Omega_d)$, which are order sharp in various important situations.\\

	\noindent {\bf keywords}: Complex sphere, Width, Multiplier, Smooth function.\\
	{\bf MSC 2010 classification numbers}: 41A46, 42B25, 42B15.\\
	\end{abstract}
	\section{Introduction}   
	 In \cite{TozoniI,TozoniII,KushpelI,KushpelII,KushpelIII,KushpelIV,KushpelV,Cuminato,Kushpel&Tozoni,Kushpel&Tozoni8,Tozoni} techniques were developed to obtain estimates for $n$-widths of multiplier
operators defined for functions on torus and  on two-points homogeneous spaces $\mathbb{M}^d$: $\mathbb{S}^d$, $\mathbb{P}^d(\R)$, $\mathbb{P}^d(\C)$, $\mathbb{P}^d(\mathbb{H})$, $\mathbb{P}^{16}(Cay)$. In this work we continue the development of methods of estimating $n$-widths of multiplier operators.

\indent The studies on asymptotic estimates for Kolmogorov $n$-widths of Sobolev classes on the circle were performed
by several important mathematicians such as Rudin, Stechkin, Gluskin, Ismagilov, Maiorov, Makovoz and Scholz
in \cite{Ismagilov, Maiorov, Makovoz, RudinI, Scholz, Stechkin},  they were initiated by Kolmogorov \cite{Kolmogorov} in 1936 and completed by Kashin \cite{Kashin, KashinI} in 1977. Several techniques were applied in these different cases, among them we highlight a technique of
discretization due to Maiorov and the Borsuk theorem. Observing the historical evolution of the study of $n$-widths,
it is possible to note that it has been an usual practice to use different techniques in proofs of lower and upper
bounds and in estimates for classes of finitely and infinitely differentiable functions (see, e.g. \cite{Pinkus}). Among the mathematicians who worked with n-widths of sets of analytic functions, we highlight Babenko, Fisher, Micchelli, Taikov and Tichomirov, who obtained estimates for sets of analytic functions in Hardy spaces in \cite{Babenko, Fisher, Taikov, Tichomirov}. Among the different tools used, we
can emphasize the properties of the class of Blaschke products of degree $m$ or less. In 1980, Melkman in \cite{Melkman} obtained estimates for a set of entire functions in $C[-T, T ]$, using other technique. One of the goals of this work, is to give an unified treatment in the study of $n$-widths of sets of functions defined on complex spheres, determined by multiplier operators. 

Let $\lambda:[0,\infty)\rightarrow\C$ be a bounded function and consider the sequences $\Lambda_{*}=(\lambda_{m,n}^{*})_{m,n\in\N}$, $\lambda_{m,n}^{*}=\lambda(m+n)$ and $\Lambda=(\lambda_{m,n})_{m,n\in\N}$, $\lambda_{m,n}=\lambda(\max\{m,n\})$. Let $\Omega_d$ be the unit sphere of $\C^d$ and for $m,n\in\N$, let $\mathcal{H}_{m,n}$ be the subspace of $L^2(\Omega_d)$ of all complex spherical harmonics of bi-degree $(m,n)$. Given $f\in L^2(\Omega_d)$, for each $m,n\in\N$, there is an unique $Y^{(m,n)}\in\mathcal{H}_{m,n}$ such that $f=\sum_{m,n}Y^{(m,n)}$ with convergence in the norm of $L^2(\Omega_d)$ (see Section 2 for definitions and properties). For $f\in L^2(\Omega_d)$ we define $\Lambda_*f=\sum_{m,n}\lambda_{m,n}^*Y^{(m,n)}$ and $\Lambda f=\sum_{m,n}\lambda_{m,n}Y^{(m,n)}$ where the convergence occurs in the norm of $L^2(\Omega_d)$. Let $U_p$ be the closed unit ball of $L^p(\Omega_d)$, $1\leq p\leq\infty$. In this paper we study asymptotic estimates for the $n$-widths of Kolmogorov, linear, of Gelfand and of Bernstein, of the sets $\Lambda_*U_p$ and $\Lambda U_p$ in $L^q(\Omega_d)$, for $1\leq p,q\leq\infty$.\\
\indent Consider two Banach spaces X and Y. The norm of X will be denoted by $\norm{\cdot}{}$ or $\norm{\cdot}{X}$
 and the closed unit ball $\{x \in X : \norm{x}{}\leq 1\}$ by $B_X$. Let A be a compact, convex and centrally symmetric subset of X. The Kolmogorov $n$-width of A in X is 
 defined by
 \begin{equation*}
  d_n(A,X):=\displaystyle\inf_{ X_{n}} \displaystyle \sup_{x \in A} \hspace{0.1cm} \displaystyle \inf_{y\in X_{n}}  \norm{x-y}{X},
 \end{equation*}
 where $X_{n}$ runs over all subspaces of X of dimension $n$. The linear $n$-width $\delta_n(A,X)$ is defined by
 \begin{equation*}
  \delta_n(A,X):=\displaystyle\inf_{ P_{n}} \displaystyle \sup_{x \in A}\norm{x-P_n(x)}{X}, 
 \end{equation*}
 where $P_n$ runs over all bounded linear operators $P_n:X\longrightarrow X$ whose range has dimension $n$. The Gelfand
 $n$-width of A in X is defined by
 \begin{equation*}
   d^n(A,X)= \displaystyle \inf_{ L^n} \hspace{0.2cm}\displaystyle \sup_{x \in A\cap L^n}  \norm{x}{X},
 \end{equation*}
 where $L^n$ runs over  all subspaces of X of codimension $n$. The Bernstein $n$-width of A in X is defined by
 \begin{equation*} 
 b_{n}(A,X)= \displaystyle \sup_{ X_{n+1}} \hspace{0.2cm}\displaystyle \sup\{\lambda:\lambda B_X \cap X_ {n+1}\subseteq A\}, 
 \end{equation*}
 where $X_{n+1}$ is any $(n+1)$-dimensional subspace of X. The following inequality is always valid:
\begin{equation*}
 b_{n}(A,X)\leq\min\{d_n(A,X),d^n(A,X)\}.
\end{equation*}

We define the $n$-widths of Kolmogorov, linear, Gelfand and Bernstein of a operator $T\in\mathcal{L}(X,Y)$, respectively, by

\begin{equation*}
 d_n(T)=d_n(T(B_X),Y),\hspace{0.2cm} \delta_n(T)=\delta_n(T(B_X),Y),\hspace{0.2cm} d^n(T)=d^n(T(B_X),Y),\hspace{0.2cm} b_n(T)=b_n(T(B_X),Y).
\end{equation*}
Consider $T\in\mathcal{L}(X,Y)$ and let $T^*$ be its adjoint operator. If $T$ is compact or $Y$ is reflexive, then (see \cite{Pinkus}, p. 34)
\begin{equation}\label{(1.2)}
 d^n(T^*)=d_n(T).
\end{equation}

\indent In Section 2 we make a brief study about real and complex spherical harmonics. In Section 3 we study estimates for $n$-widths of sets of functions on the complex sphere $\Omega_d$, associated with the multiplier operators of the type  $\Lambda_{*}$. We prove that the $n$-widths of the sets $\Lambda_{*}U_p$ in $L^q(\Omega_d)$ can be obtained from $n$-widths of sets of functions associated with multiplier operators on the real sphere $S^{2d-1}$. As application we obtain estimates for the Kolmogorov $n$-width of Sobolev classes and of finitely differentiable, infinitely differentiable and analytic functions on the complex sphere.
\indent In Section 4 and in the following sections we study $n$-widths of sets of functions on the complex sphere $\Omega_d$ associated with the multiplier operators of the type  $\Lambda$. We note that in this case the results can not be transferred from results already obtained to the real sphere $S^{2d-1}$. In Section 4 we study estimates for Levy means of special norms on finite-dimensional Banach spaces which are associated with the multiplier sequences $(\lambda_{m,n})_{m,n\in\N}$.\\
\indent In Section 5 we prove two theorems which provide us with lower and upper estimates for Kolmogorov $n$-widths of the sets $\Lambda U_p$ in $L^q(\Omega_d)$, for general multiplier operators of the type $\Lambda$. The main tool used in these proofs are the results from Section 4.\\
\indent In Section 6 we apply the results of Section 5 in the study of the Kolmogorov $n$-widths of the multiplier operators $\Lambda^{(1)}$ and $\Lambda^{(2)}$ associated with the functions $\lambda^{(1)}(t)=t^{-\gamma}(\ln t)^{-\xi}$ for $t>1$, $\lambda^{(1)}(t)=0$ for $0\leq t\leq1$, and $\lambda^{(2)}(t)=e^{-\gamma t^r}$ for $t\geq 0$, where $\gamma,\xi,r\in\R$, $\gamma, r>0$ and $\xi\geq 0$, respectively. We have that the sets $\Lambda^{(1)}U_p$ and $\Lambda_*^{(1)}U_p$ are sets of finitely differentiable functions on $\Omega_d$, and $\Lambda^{(2)}U_p$ and $\Lambda_*^{(2)}U_p$ are sets of infinitely differentiable functions if $0<r<1$ and of analytic functions if $r\geq 1$ (see \cite{MORIMOTOI,MORIMOTOII}). We prove that estimates for the Kolmogorov $n$-widths of the sets $\Lambda^{(1)}U_p$ and $\Lambda^{(2)}U_p$ in $L^q(\Omega_d)$ are sharp when $2\leq p, q < \infty$.\\
\indent We note that the order of decrease of the $n$-widths $d_n(\Lambda^{(1)}U_p,L^q)$ and $d_n(\Lambda_{\ast}^{(1)}U_p,L^q)$ is the same, but it is different for the $n$-widths $d_n(\Lambda^{(2)}U_p,L^q)$ and $d_n(\Lambda_{\ast}^{(2)}U_p,L^q)$.




\indent In this work there are several universal constants which enter into the estimates. These positive constants 
are mostly denoted by the letters $C, C_1,C_2, \ldots$. We did not carefully distinguish between the different constants, neither did we try to get good estimates for them.
The same letter will be used to denote different universal constants in different parts of the paper. For ease of 
notation we will write $a_n\gg b_n$ for two sequences if $a_n\geq Cb_n$ for $n\in\N$, $a_n\ll b_n$ if $a_n\leq Cb_n$ for $n\in\N$,
and  $a_n\asymp b_n$ if  $a_n\ll b_n$  and $a_n\gg b_n$. Also, we will write $(a)_+=a$ if $a>0$ and $(a)_+=0$ if $a\leq 0$.
\section{Analysis on the spheres}
We write $\N=\{0,1,2,\ldots\}$. For $x,y\in\R^d$ we denote by $\langle x,y\rangle$ the usual inner product $\langle x,y\rangle = x_1y_1+\cdots+x_dy_d$. Let 
 \begin{center}
  $S^{d-1}=\{x\in\R^d : \langle x,x\rangle=1\}$
 \end{center}
         be the unit sphere in $\R^d$. Let $\tau_d$ be the Lebesgue measure on $S^{d-1}$, consider the usual Banach spaces $L^p(S^{d-1})$, $1\leq p\leq\infty$ of $\tau_d$-measurable complex functions on $S^{d-1}$ and let $\widetilde{U_p}=\widetilde{U_p}(S^{d-1})=\{\phi \in L^p(S^{d-1}):\norm{\phi}{p}\leq1 \}$. We denote by $\mathcal{H}_k$ the subspace of $L^2(S^{d-1})$ of all spherical harmonics of degree $k$, that is, the space of the restrictions to $S^{d-1}$ of all polynomials $P(x_1,\ldots,x_d)$ which are homogeneous of degree $k$ and harmonic. Given $f\in L^2(S^{d-1})$, for each $k\in\N$, there exists an unique $Y^{(k)}\in \mathcal{H}_k$ such that
 \begin{equation*}
     f=\displaystyle\sum_{k=0}^\infty Y^{(k)},
 \end{equation*}
 where the series converges to $f$ in $L^2(S^{d-1})$. Let $\pi_k:L^2(S^{d-1})\longrightarrow\mathcal{H}_k$ be the harmonic projection operator, $\pi_k(f)=Y^{(k)}$. For each $x\in S^{d-1}$ and $k\in \N$, there exists an unique function $Z^{(k)}_x\in \mathcal{H}_k$, called the zonal harmonic of degree $k$ with pole $x$, such that
 \begin{equation*}
     \pi_k f(x)=\displaystyle\int_{S^{d-1}}f(y)Z_x^{(k)}(y)d_{\tau_d}(y),\hspace{0.5cm} f\in L^2(S^{d-1}).
 \end{equation*}
 We have that $\mathcal{H}_{k}\perp \mathcal{H}_{l}$, for $k\neq l$, with respect to the usual inner product in $L^2(S^{d-1})$ and for $k\geq 2$,
 \begin{equation*}
 d_{k}=\dim \mathcal{H}_{k}= \binom{d+k-1}{k}-\binom{d+k-3}{k-2}.
\end{equation*}
Let $\{Y_j\}_{j=1}^{d_{k}}$ be an orthonormal basis 
 of $\mathcal{H}_{k}$. The following addition formula is known:
 \begin{equation*}
\displaystyle\sum_{j=1}^{d_{k}}\overline{Y_j^{(k)}(x)}Y_j^{(k)}(y)= Z^{(k)}_x(y)=\frac{d+2k-2}{\widetilde{\omega_d}(d-2)} P^{(d-2)/2}_{k}(\langle x,y \rangle),\hspace{1cm}x,y \in S^{d-1},
\end{equation*}
where $\widetilde{\omega_d}$ denotes the surface area of  $S^{d-1}$ and $P^{(d-2)/2}_{k}$
denotes the Gegenbauer polynomial of degree $k$. Let us denote by $\widetilde{Z}^{(k)}_d$  the function 
  \begin{equation*}
  \widetilde{Z}^{(k)}(t)=\widetilde{Z}^{(k)}_d(t)=\frac{d+2k-2}{\widetilde{\omega_d}(d-2)} P^{(d-2)/2}_{k}( t ),\hspace{0.4cm} t\in [-1,1].
 \end{equation*}
 
 Let $\overline{\Upsilon}=\overline{\Upsilon}_d:\C^d\rightarrow \R^{2d}$ be given by 
\begin{equation*}
 \overline{\Upsilon}(z)=(x_1,y_1,\ldots,x_d,y_d),\hspace{0.5cm} z_k=x_k+iy_k,\hspace{0.5cm} k=1,\ldots,d.
\end{equation*}
For $z,w\in\C^d$, we denote by $\langle z,w\rangle$ the usual inner product $\langle z,w\rangle=z_1\overline{w}_1+\cdots+z_d\overline{w}_d$. Let 
 \begin{center}
  $\Omega_d=\{z\in\C^d : \langle z,z\rangle=1\}$
 \end{center}
 be the unit sphere in $\C^d$. We denote by $\Upsilon$ or $\Upsilon_d$, the restriction of $\overline{\Upsilon}$ to 
$\Omega_d$. Since $\Upsilon(\Omega_d)=S^{2d-1}$ we can define the measure $\sigma_d$ on the 
Borel $\sigma$-algebra $\mathcal{B}(\Omega_d)$ of $\Omega_d$ by $\sigma_d(E)=\tau_{2d}(\Upsilon(E))$, $E\in\mathcal{B}(\Omega_d)$. A measurable function $f:\Omega_d\longrightarrow \C$ is $\sigma_d$-integrable if
\begin{equation*}
 \displaystyle\int_{\Omega_d}fd\sigma_d=\displaystyle\int_{S^{2d-1}}(f\circ\Upsilon^{-1})d\tau_{2d}
\end{equation*}
 is well defined. We denote by $L^p=L^p(\Omega_d)$, $1\leq p\leq\infty$, the vector space of all measurable functions defined on $\Omega_d$ and with values in $\C$, satisfying 
\begin{equation*}
 \norm{f}{p}:=\left(\int_{\Omega_d} |f(z)|^p d\sigma_d(z) \right)^{1/p}<\infty, \hspace{0.4cm}1\leq p<\infty,
 \end{equation*}
 \begin{equation*}
  \norm{f}{\infty}:=\esssup_{z\in \Omega_d} |f(z)|<\infty.
 \end{equation*} 
  The space  $L^2(\Omega_d)$ is a Hilbert space with the inner product given by 
 \begin{equation*}
 (f,g)=\displaystyle\int_{\Omega_d}f(z)\overline{g(z)}d\sigma_d(z),\hspace{0.4cm}f,g\in L^2(\Omega_d).
\end{equation*}
Let $U_p=U_p(\Omega_d)=\{\phi \in L^p(\Omega_d):\norm{\phi}{p}\leq1 \}$, $\mathbb{D}=\{z\in\C : |z|\leq1\}$ and let $\nu_\alpha$ be the Borel measure on $\mathbb{D}$ given by
\begin{center}
$\nu_\alpha(E)=\displaystyle\int_{E}(1-|z|^2)^\alpha d\mu_1(z), \hspace{1cm}E\in\mathcal{B}(\mathbb{D}).$
\end{center}
where $\alpha>-1$ and $\mu_1\circ \overline{\Upsilon}^{-1}_1$ is the Lebesgue measure on $\R^2$. For $f\in L^1(\Omega_d)$ and $K\in L^1(\mathbb{D},\nu_\alpha)$, the convolution function 
$f\ast K$ is defined by 
 \begin{center}
 $(f\ast K)(w)=\displaystyle\int_{\Omega_d}f(z)\overline{K(\langle z,w\rangle)}d\sigma_d(z), \hspace{1,5cm}w\in\Omega_d.$
\end{center}

Let  $\alpha=(\alpha_1,\ldots,\alpha_d)\in\N^d$ and $z=(z_1,\ldots,z_d)\in\C^d$. We write $|\alpha|=\alpha_1+\cdots+\alpha_d$, $z^\alpha=z_1^{\alpha_1}\cdots z_d^{\alpha_d}$ and $\overline{z}=(\overline{z}_1,\ldots,\overline{z}_d)$. We denote by $\mathbb{P}(\C^d)$ the vector space of all polynomials in the independent variables $z$ and $\overline{z}$ . If $p\in\mathbb{P}(\C^d)$ then there are $m,n\in\N$ such that  
\begin{equation*}
 p(z):=p(z,\overline{z})=\displaystyle\sum_{|\alpha|\leq m,|\beta|\leq n} p_{\alpha,\beta}z^\alpha\overline{z}^\beta,\hspace{0.5cm} z\in \C^d,\hspace{0.3cm} p_{\alpha,\beta}\in\C,\hspace{0.3cm} \alpha,\beta\in\N^d.
\end{equation*}
 The vector subspace of $\mathbb{P}(\C^d)$ formed by the polynomials that are homogeneous of degree $m$ in the variable $z$ and of
degree $n$ in the variable $\overline{z}$, will be denoted by $\mathbb{P}_{m,n}(\C^d)$. The subspace of $\mathbb{P}_{m,n}(\C^d)$  of all polynomials
that are in the kernel of the complex laplacian
\begin{equation*}
  \Delta_{(2d)}:=4\displaystyle\sum_{j=1}^d\frac{\partial^2}{\partial z_j\partial \overline{z}_j}
 \end{equation*}
 will be denoted by $\mathbb{H}_{m,n}(\C^d)$.\\
 \indent The space of the restrictions of the polynomials in $\mathbb{P}_{m,n}(\C^d)$ to the complex sphere $\Omega_d$ will be denoted by $\mathcal{P}_{m,n}=\mathcal{P}_{m,n}(\Omega_d)$ and of the restrictions of the polynomials in $\mathbb{H}_{m,n}(\C^d)$ to $\Omega_d$ by $\mathcal{H}_{m,n}=\mathcal{H}_{m,n}(\Omega_d)$. We have that $\mathcal{H}_{m,n}\perp \mathcal{H}_{r,s}$, for $(m,n)\neq(r,s)$, with respect to the usual inner product in $L^2(\Omega_d)$. The elements of $\mathcal{H}_{m,n}$ are called complex spherical harmonics of degree $m$ in $z$ and degree $n$ in $\overline{z}$. Let $m,n\in\N$ and $m+n=k$. Since $\mathcal{H}_{m,n}\subset \{Y\circ\Upsilon:Y\in\mathcal{H}_{k}\}$ and there is an orthonormal basis of $\mathcal{H}_{k}$ consisting of only real functions, then also there is an orthonormal basis of $\mathcal{H}_{m,n}$ consisting of only real functions. The dimension of $\mathcal{H}_{m,n}$ is given by 
 \begin{equation*}
 d_{m,n}=\dim \mathcal{H}_{m,n}= \binom{m+d-1}{m}\binom{n+d-1}{n}-\binom{m+d-2}{m-1}\binom{n+d-2}{n-1}.
\end{equation*}
We observe that there is an absolute constant $C>0$ such that 
\begin{equation}\label{eq:(2.2.2)}
 \frac{(m+n)(mn)^{d-2}}{(d-1)!(d-2)!}\leq d_{m,n}\leq \frac{(m+n)(mn)^{d-2}}{(d-1)!(d-2)!}+C(m+n)m^{d-2}n^{d-3},\hspace{0,4cm}(m,n)\in\N^2.
\end{equation}
It is known (see \cite{Rudin}, Chapter 12) that if $f\in L^2(\Omega_d)$ then there exists an unique decomposition
\begin{equation*}
f= \displaystyle\sum_{m,n\in\N} Y^{(m,n)},     
\end{equation*}
where $Y^{(m,n)}\in\mathcal{H}_{m,n}$ for $m,n\in \N$ and the above series converges to $f$ in $L^2(\Omega_d)$. The projection operator $\pi_{m,n}$ from $L^2(\Omega_d)$ on $\mathcal{H}_{m,n}$ is defined by $\pi_{m,n}f=Y^{(m,n)}$. For every $w\in\Omega_d$ there exists an unique $Z_w^{(m,n)}\in\mathcal{H}_{m,n}$ that satisfies
$(\pi_{m,n}f)(w)=(f,Z_w^{(m,n)})$, $f\in L^2(\Omega_d)$.
 The function $Z_w^{(m,n)}$ is called the zonal harmonic of degree $(m,n)$ with pole $w$.\\
 \indent Next, we introduce some classes of disk polynomials. The disk polynomial of degree $m$ in $z$ and degree $n$ in $\overline{z}$ 
 associated to the integer $d-2$ is the polynomial $R_{m,n}^{d-2}$ given by 
 \begin{equation*}
  R_{m,n}^{d-2}(z)=\left \{ \begin{matrix} \frac{1}{P_n^{(d-2,m-n)}(1)}z^{m-n}P_n^{(d-2,m-n)}(2|z|^2-1), &\mbox{if }m\geq n,
\\ \frac{1}{P_m^{(d-2,n-m)}(1)}\overline{z}^{n-m}P_m^{(d-2,n-m)}(2|z|^2-1), & \mbox{if }m<n  ,\end{matrix}\right. 
 \end{equation*}
 where $P_k^{(d-2,m-n)}$ is the usual Jacobi polynomial of degree $k$ associated to the pair of numbers $(d-2,m-n)$. Let $\{Y_j\}_{j=1}^{d_{m,n}}$ be an orthonormal basis 
 of $\mathcal{H}_{m,n}$. The following addition formula is known: 
 \begin{equation*}
 Z^{(m,n)}_w(z)=\displaystyle\sum_{k=1}^{d_{m,n}}\overline{Y_k(w)}Y_k(z)=\frac{d_{m,n}}{\omega_d} R^{d-2}_{m,n}(\langle z,w \rangle),\hspace{1cm}w,z \in \Omega_d,
\end{equation*}
where $\omega_d=\sigma_d(\Omega_d)=\tau_{2d}(S^{2d-1})$. In particular 
\begin{equation*}
    Z^{(m,n)}_w(w)=\displaystyle\sum_{k=1}^{d_{m,n}}|Y_k(z)|^2=\frac{d_{m,n}}{\omega_d}.
\end{equation*}
 Let us denote by $\widetilde{Z}^{(m,n)}$  the function 
  \begin{equation*}
  \widetilde{Z}^{(m,n)}(t)=\frac{d_{m,n}}{\omega_d}R^{d-2}_{m,n}(t),\hspace{0.4cm} t\in \mathbb{D},
 \end{equation*}
 and let $e=e_d=(0,0,...,0,1)$, $f\in L^2(\Omega_d)$. The convolution product $f\ast Z^{(m,n)}_e$ is given by
\begin{eqnarray*}
 f\ast Z^{(m,n)}_e(w)=f\ast \widetilde{Z}^{(m,n)}(w)&=&\displaystyle\int_{\Omega_d} f(z)\overline{Z^{(m,n)}_w(z)}  d\sigma_d(z)=(\pi_{m,n}f)(w).
\end{eqnarray*}
\indent It follow a relationship between the real zonal harmonics on $S^{2d-1}$ and the complex zonal harmonics on $\Omega_d$, as a consequence, we obtain an 
expression relating polynomials on the disk and Gegenbauer polynomials.
\begin{mypro}
 For any $k\in \N$ and $x\in S^{2d-1}$, we have that
 \begin{equation*}
  Z^{(k)}_x\circ \Upsilon = \displaystyle\sum_{m+n=k} Z_w^{(m,n)},\hspace{0.4cm}w=\Upsilon^{-1}(x).
 \end{equation*}
\end{mypro}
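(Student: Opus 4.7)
The plan is to recognize both sides of the claimed identity as reproducing kernels of the space $V_k := \{Y\circ\Upsilon : Y\in\mathcal{H}_k\}\subset L^2(\Omega_d)$ at the point $w=\Upsilon^{-1}(x)$, and to verify the identity by pairing each side against an arbitrary element of every subspace $\mathcal{H}_{m',n'}$. The inclusion $\mathcal{H}_{m,n}\subset V_{m+n}$ stated in the excerpt, combined with the mutual orthogonality of the $\mathcal{H}_{m,n}$ in $L^2(\Omega_d)$ and the dimension identity $\sum_{m+n=k}d_{m,n}=\dim\mathcal{H}_k$ (readily checked from the explicit formulas for $d_{m,n}$ and for $d_k$ on $S^{2d-1}$), yields the orthogonal decomposition $V_k=\bigoplus_{m+n=k}\mathcal{H}_{m,n}$. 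In particular, both $Z^{(k)}_x\circ\Upsilon$ and $\sum_{m+n=k}Z^{(m,n)}_w$ belong to $V_k$.

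For the left side I would exploit the fact that $Z^{(k)}_x$ takes only real values (it is a Gegenbauer polynomial evaluated at the real inner product $\langle x,y\rangle$), so the complex conjugate in the $L^2(\Omega_d)$ inner product may be dropped, and the change of variables $\Upsilon\colon\Omega_d\to S^{2d-1}$ yields
\begin{equation*}
(f, Z^{(k)}_x\circ\Upsilon)_{L^2(\Omega_d)} = \int_{S^{2d-1}}(f\circ\Upsilon^{-1})(y)\,Z^{(k)}_x(y)\,d\tau_{2d}(y) = \pi_k(f\circ\Upsilon^{-1})(x).
\end{equation*}
Taking $f\in\mathcal{H}_{m',n'}$ gives $f\circ\Upsilon^{-1}\in\mathcal{H}_{m'+n'}$, so this equals $f(w)$ when $m'+n'=k$ and vanishes otherwise. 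For the right side, the reproducing identity $(f,Z^{(m,n)}_w)=\pi_{m,n}(f)(w)$ produces $\bigl(f,\sum_{m+n=k}Z^{(m,n)}_w\bigr)=\sum_{m+n=k}\pi_{m,n}(f)(w)$, which likewise equals $f(w)$ if $m'+n'=k$ (only the term $(m,n)=(m',n')$ survives) and vanishes otherwise.

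Thus the inner products $\bigl(f,Z^{(k)}_x\circ\Upsilon\bigr)$ and $\bigl(f,\sum_{m+n=k}Z^{(m,n)}_w\bigr)$ agree for every $f\in\mathcal{H}_{m',n'}$, hence by linearity on the algebraic direct sum $\bigoplus_{m',n'}\mathcal{H}_{m',n'}$, which is dense in $L^2(\Omega_d)$. Continuity of the inner product then forces equality of $Z^{(k)}_x\circ\Upsilon$ and $\sum_{m+n=k}Z^{(m,n)}_w$ as elements of $L^2(\Omega_d)$, and since both sides are continuous (in fact polynomial) functions on $\Omega_d$, they coincide pointwise. The only mildly delicate point is justifying the real-valuedness of $Z^{(k)}_x$ so that one can pass the integration from $\Omega_d$ to $S^{2d-1}$ without a conjugate; once that is set down, the argument is the standard \emph{test against each harmonic subspace} technique for reproducing kernels and I do not anticipate any substantial obstacle.
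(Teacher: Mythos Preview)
Your proof is correct. The paper states this proposition without proof (it is one of the preliminary facts collected in Section~2, with a blanket reference to \cite{Dai, MENEGATTO, Rudin} for details), so there is no direct comparison to make. Your reproducing-kernel argument---testing both sides against each $\mathcal{H}_{m',n'}$ and invoking density---is the natural way to establish the identity, and all the ingredients you use (the inclusion $\mathcal{H}_{m,n}\subset\{Y\circ\Upsilon:Y\in\mathcal{H}_{m+n}\}$, the real-valuedness of $Z^{(k)}_x$ via the Gegenbauer formula, and the change of variables $\sigma_d\leftrightarrow\tau_{2d}$) are available from the setup in Section~2. The dimension count $\sum_{m+n=k}d_{m,n}=\dim\mathcal{H}_k$ that you mention is pleasant context but not actually needed for the argument, since the equality of inner products against a dense set already forces the identity.
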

\begin{mycor}\label{cor2.2}
 Given $z,w\in \Omega_d$,
 \begin{eqnarray*}
  \frac{2d+2k-2}{\omega_{d}(2d-2)}P_k^{(2d-2)/2}(\langle \Upsilon(z),\Upsilon(w)\rangle)&=&\displaystyle\sum_{m+n=k}\widetilde{Z}^{(m,n)}(\langle z,w \rangle).
 \end{eqnarray*}
\end{mycor}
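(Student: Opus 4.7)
The plan is to derive the corollary essentially by unpacking both sides of Proposition~2.1 at a pair of points and reading off the closed form. There is no deep work to do; the identity just translates the pointwise statement about zonal harmonics on $S^{2d-1}$ and $\Omega_d$ into one between Gegenbauer and disk polynomials.

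First, I would recall that the real zonal harmonic on $S^{2d-1}$ has the closed form
\begin{equation*}
Z^{(k)}_x(y) = \frac{2d+2k-2}{\widetilde{\omega}_{2d}(2d-2)}\, P^{(2d-2)/2}_{k}(\langle x,y\rangle),\qquad x,y\in S^{2d-1},
\end{equation*}
which is the earlier addition formula applied in dimension $2d$. Since $\omega_d=\sigma_d(\Omega_d)=\tau_{2d}(S^{2d-1})=\widetilde{\omega}_{2d}$, the leading constant coincides with the one appearing on the left of the corollary.

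Next, I would apply Proposition~2.1 with $x=\Upsilon(w)$ for $w\in\Omega_d$, and evaluate the identity $Z^{(k)}_x\circ\Upsilon=\sum_{m+n=k}Z^{(m,n)}_w$ at a point $z\in\Omega_d$. On the left, using the closed form above and $\Upsilon(z)\in S^{2d-1}$, I get
\begin{equation*}
Z^{(k)}_{\Upsilon(w)}(\Upsilon(z)) = \frac{2d+2k-2}{\omega_d(2d-2)}\, P^{(2d-2)/2}_{k}(\langle \Upsilon(z),\Upsilon(w)\rangle).
\end{equation*}
On the right, I apply the addition formula on $\Omega_d$ term by term to rewrite each $Z^{(m,n)}_w(z) = (d_{m,n}/\omega_d)R^{d-2}_{m,n}(\langle z,w\rangle) = \widetilde{Z}^{(m,n)}(\langle z,w\rangle)$. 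Equating the two expressions yields exactly the claimed identity.

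The only mild obstacle is a bookkeeping one: making sure that the surface-area constant $\widetilde{\omega}_{2d}$ in the Gegenbauer formula is identified with $\omega_d$ so the prefactors match, and that the dimension index is $2d$ (not $d$) when specializing the real-sphere formula to $S^{2d-1}$. Once those conventions are straightened out, the corollary is immediate from Proposition~2.1.
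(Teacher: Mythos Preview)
Your proposal is correct and follows precisely the approach the paper implicitly intends: the corollary is stated immediately after Proposition~2.1 without an explicit proof, and the derivation consists exactly of evaluating both sides of that proposition at $\Upsilon(z)$ and rewriting via the two addition formulas, together with the identification $\omega_d=\widetilde{\omega}_{2d}$. There is nothing to add or correct.
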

We denote $\hat{ Z}^{(k)}_d=\sum_{m+n=k}\widetilde{Z}^{(m,n)}$ and we have from the above corollary that 
$\widetilde{Z}^{(k)}_{2d}(\langle \Upsilon(z),\Upsilon(w)\rangle)=\hat{Z}^{(k)}_d(\langle z,w \rangle)$, $z,w\in \Omega_d $. For a detailed study about real and complex spherical harmonics see \cite{Dai, MENEGATTO, Rudin}.
\section{Multiplier operators of type $\Lambda_*$} 

We  denote by $\mathcal{H}$ the vector space generated by the union $\bigcup_{m,n} \mathcal{H}_{m,n}$. The elements of $\mathcal{H}$ are finite linear combinations of elements 
of the subspaces $\mathcal{H}_{m,n}$ and $\mathcal{H}$  is dense in $L^p(\Omega_d)$ for $1\leq p< \infty$. Also we  denote by $\widetilde{\mathcal{H}}$ the vector space generated by the union $\bigcup_{k} \mathcal{H}_{k}$ which is dense in $L^p(S^{2d-1})$ for $1\leq p< \infty$.\\
\indent Let $\Lambda=\{ \lambda_{m,n}\}_{m,n\in\N}$ be a sequence of complex numbers and $1\leq p,q\leq \infty$. If for any $\varphi \in  L^p(\Omega_d)$ there is a function  
$f=\Lambda\varphi\in  L^q(\Omega_d)$ with formal expansion in spherical harmonic 
\begin{equation*}
   f\sim \displaystyle\sum_{m,n\in\N}\lambda_{m,n}\varphi\ast \widetilde{Z}^{(m,n)} ,
  \end{equation*}
 such that $\norm{\Lambda}{p,q}=\sup \{\norm{\Lambda\varphi}{q}:\varphi \in U_p  \}<\infty$, we say that the multiplier operator $\Lambda$ is bounded from $L^p$ into $L^q$
 with norm $\norm{\Lambda}{p,q}$. Now let $\widetilde{\Lambda}=\{ \lambda_{k}\}_{k\in\N}$ be a sequence of complex numbers and $1\leq p,q\leq \infty$. If for any $\varphi \in  L^p(S^{2d-1})$ there is a function  
$f=\widetilde{\Lambda}\varphi\in  L^q(S^{2d-1})$ with formal expansion in spherical harmonic 
\begin{equation*}
   f\sim \displaystyle\sum_{k\in\N}\lambda_{k}\varphi\ast \widetilde{Z}^{(k)} ,
  \end{equation*}
 such that $\norm{\widetilde{\Lambda}}{p,q}=\sup \{\norm{\widetilde{\Lambda}\varphi}{q}:\varphi \in \widetilde{U}_p  \}<\infty$, we say that the multiplier operator $\widetilde{\Lambda}$ is bounded from $L^p$ into $L^q$
 with norm $\norm{\widetilde{\Lambda}}{p,q}$.
 \begin{mylem}\label{lem2.1}
  Let $\lambda:[0,\infty)\longrightarrow \C$ be a bounded function, $\lambda_{m,n}^{\ast}=\lambda(m+n)$ and $\lambda_k=\lambda(k)$ for $m,n,k \in \N$. Consider the multiplier operators associated with the sequences $\Lambda_{\ast}=\{ \lambda_{m,n}^{\ast}\}_{m,n\in \N}$ and $\widetilde{\Lambda}=\{\lambda_k\}_{k\in\N}$ defined  on $\mathcal{H}$ and on $\widetilde{\mathcal{H}}$ respectively. Let $1\leq p,q\leq\infty$  and suppose that $\Lambda_{\ast}$ is bounded from $L^p$ to $L^q$. Then
  \begin{equation}\label{eq2.6}
  \Lambda_{\ast}\varphi=\left(\widetilde{\Lambda}(\varphi\circ \Upsilon^{-1})\right)\circ\Upsilon, \hspace{0.6cm}\varphi \in L^p(\Omega_d)
 \end{equation}
 and
 \begin{equation}\label{eq2.7}
 \Lambda_{\ast} U_p=(\widetilde{\Lambda}\widetilde{U}_p)\circ\Upsilon =\{\varphi \circ\Upsilon : \varphi\in \widetilde{\Lambda}\widetilde{U}_p\}.
 \end{equation}
 \end{mylem}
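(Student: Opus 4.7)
The plan is to establish (\ref{eq2.6}) first on the dense subspace $\mathcal{H}\subset L^p(\Omega_d)$, then extend by continuity, and finally derive (\ref{eq2.7}) as a direct consequence. The decisive structural ingredient is the identity
\[
\mathcal{H}_k\circ\Upsilon \;=\; \bigoplus_{m+n=k}\mathcal{H}_{m,n}
\]
as subspaces of $L^2(\Omega_d)$. The inclusion $\supset$ is indicated just after the definition of $\mathcal{H}_{m,n}$ and rests on the fact that $\Delta_{(2d)}=4\sum_j\partial_{z_j}\partial_{\overline{z}_j}$ coincides with the real Laplacian on $\mathbb{R}^{2d}$, so a bi-degree $(m,n)$ harmonic polynomial is automatically real harmonic of total degree $k=m+n$ when viewed through $\Upsilon$. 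For the reverse inclusion, the Proposition preceding Corollary~\ref{cor2.2} presents $Z^{(k)}_x\circ\Upsilon$ as $\sum_{m+n=k}Z^{(m,n)}_w\in\bigoplus_{m+n=k}\mathcal{H}_{m,n}$; since the zonal harmonics $\{Z^{(k)}_x:x\in S^{2d-1}\}$ span $\mathcal{H}_k$, the whole pull-back $\mathcal{H}_k\circ\Upsilon$ lies in $\bigoplus_{m+n=k}\mathcal{H}_{m,n}$.

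Because $\sigma_d=\tau_{2d}\circ\Upsilon$, the map $\psi\mapsto\psi\circ\Upsilon$ is a surjective isometry $L^p(S^{2d-1})\to L^p(\Omega_d)$ for every $1\leq p\leq\infty$, and at $p=2$ it is unitary. The subspace identification then says that, under this unitary isomorphism, the orthogonal projection onto $\mathcal{H}_k$ transfers exactly to $\sum_{m+n=k}\pi_{m,n}$; equivalently,
\[
\pi_k(\varphi\circ\Upsilon^{-1})\circ\Upsilon \;=\; \sum_{m+n=k}\pi_{m,n}\varphi,\qquad \varphi\in L^2(\Omega_d).
\]
For $\varphi\in\mathcal{H}$ this is a finite sum; multiplying by $\lambda(k)$, summing over $k$, and regrouping $\lambda(k)=\lambda_{m,n}^{\ast}$ whenever $k=m+n$ yields
\[
\bigl(\widetilde{\Lambda}(\varphi\circ\Upsilon^{-1})\bigr)\circ\Upsilon \;=\; \sum_k\lambda(k)\sum_{m+n=k}\pi_{m,n}\varphi \;=\; \sum_{m,n}\lambda_{m,n}^{\ast}\pi_{m,n}\varphi \;=\; \Lambda_{\ast}\varphi,
\]
which proves (\ref{eq2.6}) on $\mathcal{H}$.

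To pass from $\mathcal{H}$ to $L^p(\Omega_d)$, I combine the isometry property with the boundedness hypothesis: for $\psi\in\widetilde{\mathcal{H}}$ and $\varphi=\psi\circ\Upsilon\in\mathcal{H}$ the identity just proved gives
\[
\|\widetilde{\Lambda}\psi\|_{L^q(S^{2d-1})} \;=\; \|\Lambda_{\ast}\varphi\|_{L^q(\Omega_d)} \;\leq\; \|\Lambda_{\ast}\|_{p,q}\,\|\psi\|_{L^p(S^{2d-1})},
\]
so $\widetilde{\Lambda}$ extends to a bounded operator on $L^p(S^{2d-1})$ (by density of $\widetilde{\mathcal{H}}$ when $p<\infty$; for $p=\infty$ one uses a weak-$\ast$ limit or an adjoint argument), and (\ref{eq2.6}) persists on all of $L^p(\Omega_d)$ by continuity. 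Finally, (\ref{eq2.7}) is immediate from (\ref{eq2.6}) because $\varphi\in U_p\Leftrightarrow\varphi\circ\Upsilon^{-1}\in\widetilde{U}_p$. The main obstacle I expect is the subspace identification $\mathcal{H}_k\circ\Upsilon=\bigoplus_{m+n=k}\mathcal{H}_{m,n}$, obtained by combining the Proposition with the spanning property of zonal harmonics; after that, everything reduces to formal manipulation of finite expansions plus a routine density argument.
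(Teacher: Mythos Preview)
Your argument is correct and rests on the same key ingredient as the paper's proof, namely the zonal-harmonic relation from the Proposition preceding Corollary~\ref{cor2.2}. The organizational difference is that you package this relation as a subspace identity $\mathcal{H}_k\circ\Upsilon=\bigoplus_{m+n=k}\mathcal{H}_{m,n}$ and transfer the projections $\pi_k\leftrightarrow\sum_{m+n=k}\pi_{m,n}$, whereas the paper uses Corollary~\ref{cor2.2} directly at the level of convolution kernels to compute $\varphi\ast\hat{Z}^{(k)}_d(w)=(\widetilde{Z}^{(k)}_{2d}\ast(\varphi\circ\Upsilon^{-1}))(\Upsilon(w))$ for all $\varphi\in L^2(\Omega_d)$ at once. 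Because the paper obtains (\ref{eq2.6}) on all of $L^2$ immediately, the case $p=\infty$ follows from the inclusion $L^\infty\subset L^2$ without any weak-$\ast$ or adjoint detour; your route through $\mathcal{H}$ is slightly less convenient here, and you should note that since $\lambda$ is bounded the projection identity you established already yields (\ref{eq2.6}) on $L^2\supset L^\infty$, making the hand-wavy parenthetical unnecessary.
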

 \begin{proof}
  For $\varphi \in L^2(\Omega_d)$ we get $\norm{\Lambda_{\ast}\varphi}{2}\leq\left(\sup_{m,n\in \N}|\lambda_{m,n}^{\ast}|\right)\norm{\varphi}{2} $, then $\Lambda_{\ast}$ is a bounded operator on $L^2(\Omega_d)$. Thus
  \begin{eqnarray*}
 \Lambda_{\ast} \varphi&=&\displaystyle\sum_{k=0}^\infty\displaystyle\sum_{m+n=k}\lambda(m+n)\varphi\ast \widetilde{Z}^{(m,n)}\\
 &=&\displaystyle\sum_{k=0}^\infty\lambda(k)\varphi\ast \left(\displaystyle\sum_{m+n=k}\widetilde{Z}^{(m,n)}\right)\\
 &=&\displaystyle\sum_{k=0}^\infty\lambda(k)\varphi\ast \hat{Z}^{(k)}_d,
\end{eqnarray*}
 where the above series converges in $L^2$. On the other hand, from Corollary \ref{cor2.2}, for $w\in \Omega_d$ we have
 \begin{eqnarray*}
 \varphi\ast \hat{Z}^{(k)}_d(w)&=&\displaystyle\int_{\Omega_d} \varphi(z)\overline{\hat{Z}^{(k)}_d(\langle z,w\rangle)}  d\sigma_d(z)\\
 &=&\displaystyle\int_{\Omega_d} \varphi(z)\overline{Z^{(k)}_{\Upsilon(z)}(\Upsilon(w))}  d\sigma_d(z)\\
 &=&\displaystyle\int_{S^{2d-1}} Z^{(k)}_{\Upsilon(w)}(x)(\varphi\circ \Upsilon^{-1})(x)  d\tau_{2d}(x)\\
 &=&(\widetilde{Z}_{2d}^{(k)}\ast(\varphi\circ \Upsilon^{-1}))(\Upsilon(w)).
\end{eqnarray*}
Thus
\begin{equation*}
\Lambda_{\ast}\varphi=\left(\displaystyle\sum_{k=0}^\infty\lambda(k)\widetilde{Z}_{2d}^{(k)}\ast(\varphi\circ \Upsilon^{-1})\right)\circ \Upsilon
=(\widetilde{\Lambda}(\varphi\circ \Upsilon^{-1}))\circ\Upsilon, \hspace{0.6cm}\varphi \in L^2(\Omega_d).
\end{equation*}
For $2\leq p \leq \infty$, we have that $L^\infty(\Omega_d)\subseteq L^p(\Omega_d) \subseteq L^2(\Omega_d)$, then $\Lambda_{\ast}\varphi=(\widetilde{\Lambda}(\varphi\circ \Upsilon^{-1}))\circ\Upsilon$ for all $\varphi\in L^p(\Omega_d)$ with $2\leq p \leq \infty$, in particular for $\varphi \in \mathcal{H}$. It follows for $1\leq p<\infty$  that 
\begin{eqnarray*}
 \norm{\Lambda_{\ast}}{p,q}&=&\sup \{\norm{\Lambda_{\ast}\varphi}{q}:\varphi \in U_p \cap\mathcal{H} \}\\
 &=&\sup \{\norm{(\widetilde{\Lambda}(\varphi\circ \Upsilon^{-1}))\circ\Upsilon}{q}:\varphi\circ \Upsilon^{-1} \in \widetilde{U}_p\cap\widetilde{\mathcal{H}}  \}\\
 &=&\sup \{\norm{\widetilde{\Lambda}h}{q}:h \in \widetilde{U}_p\cap\widetilde{\mathcal{H}}  \}\\
 &=&\norm{\widetilde{\Lambda}}{p,q}.
\end{eqnarray*}
Thus $\widetilde{\Lambda}$ is a bounded operator from $L^p(S^{2d-1})$ to $L^q(S^{2d-1})$.\\
\indent Now consider $1\leq p\leq 2$. Given $\varphi\in L^p(\Omega_d)$ and for each $n\in\N$, let $\varphi_n\in \mathcal{H}$ be such that $\varphi_n \rightarrow\varphi$ in $L^p(\Omega_d)$. Then $\Lambda_{\ast}\varphi_n\longrightarrow\Lambda_{\ast}\varphi$ in $L^q(\Omega_d)$, $\Lambda_{\ast}\varphi_n=(\widetilde{\Lambda}(\varphi_n\circ \Upsilon^{-1}))\circ\Upsilon$ and $\widetilde{\Lambda}(\varphi_n\circ\Upsilon^{-1})\longrightarrow\widetilde{\Lambda}(\varphi\circ\Upsilon^{-1})$ in $L^q(S^{2d-1})$. Therefore $\widetilde{\Lambda}(\varphi_n\circ\Upsilon^{-1})\circ \Upsilon \longrightarrow\widetilde{\Lambda}(\varphi\circ\Upsilon^{-1})\circ \Upsilon$ in $L^q(\Omega_d)$, then $\Lambda_{\ast}\varphi=\widetilde{\Lambda}(\varphi\circ \Upsilon^{-1}))\circ\Upsilon$ q.t.p. . Thus we proved (\ref{eq2.6}) and the proof of (\ref{eq2.7}) follows easily from (\ref{eq2.6}).
 \end{proof}
\begin{myth}\label{lem2.2}
 Let $\Lambda_{\ast}$ and $\widetilde{\Lambda}$ as in Lemma \ref{lem2.1}. Then
 \begin{equation*}
   d_n(\Lambda_{\ast} U_p(\Omega_d),L^q(\Omega_d))= d_n(\widetilde{\Lambda}\widetilde{U}_p(S^{2d-1}),L^q(S^{2d-1})),
 \end{equation*}
 \begin{equation*}
   d^n(\Lambda_{\ast} U_p(\Omega_d),L^q(\Omega_d))= d^n(\widetilde{\Lambda}\widetilde{U}_p(S^{2d-1}),L^q(S^{2d-1})),
 \end{equation*}
 \begin{equation*}
   \delta_n(\Lambda_{\ast} U_p(\Omega_d),L^q(\Omega_d))= \delta_n(\widetilde{\Lambda}\widetilde{U}_p(S^{2d-1}),L^q(S^{2d-1})),
 \end{equation*}
 \begin{equation*}
 b_n(\Lambda_{\ast} U_p(\Omega_d),L^q(\Omega_d))= b_n(\widetilde{\Lambda}\widetilde{U}_p(S^{2d-1}),L^q(S^{2d-1})).
 \end{equation*}

\end{myth}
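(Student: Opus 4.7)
The plan is to reduce all four equalities to a single structural fact: the composition map $T\varphi = \varphi \circ \Upsilon$ is a surjective linear isometry from $L^r(S^{2d-1})$ onto $L^r(\Omega_d)$ for every $1 \leq r \leq \infty$. This is immediate from the definition $\sigma_d(E) = \tau_{2d}(\Upsilon(E))$, which yields $\norm{T\varphi}{L^r(\Omega_d)} = \norm{\varphi}{L^r(S^{2d-1})}$; in particular $T(\widetilde{U}_p) = U_p$ and $T(B_{L^q(S^{2d-1})}) = B_{L^q(\Omega_d)}$. Combining this with identity (\ref{eq2.7}) of Lemma \ref{lem2.1} gives $T(\widetilde{\Lambda}\widetilde{U}_p(S^{2d-1})) = \Lambda_{\ast} U_p(\Omega_d)$, so the target set on one sphere is carried onto the target set on the other by an isometry of the ambient $L^q$-spaces.

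With this setup in hand, the four equalities follow from the general invariance of each width under surjective linear isometries. For $d_n$, the map $T$ is a bijection between $n$-dimensional subspaces of the two $L^q$-spaces, and $\norm{Tx - Ty}{L^q(\Omega_d)} = \norm{x-y}{L^q(S^{2d-1})}$ preserves the inner infimum; taking the outer supremum over $\widetilde{\Lambda}\widetilde{U}_p$ and then the infimum over $n$-dimensional subspaces yields the first equality. The same argument, with \textbf{codimension} replacing dimension and using $T(A \cap L^n) = T(A) \cap T(L^n)$, handles $d^n$. For $\delta_n$ the correspondence $P_n \leftrightarrow T^{-1} P_n T$ is a bijection between bounded linear operators of rank $n$ on the two $L^q$-spaces, and $\norm{Tx - P_n Tx}{L^q(\Omega_d)} = \norm{x - T^{-1}P_n T x}{L^q(S^{2d-1})}$. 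For $b_n$, the bijective correspondence $Y_{n+1} \leftrightarrow T(Y_{n+1})$ between $(n+1)$-dimensional subspaces, combined with $T(B_{L^q(S^{2d-1})}) = B_{L^q(\Omega_d)}$, makes the inclusions $\lambda B_{L^q(S^{2d-1})} \cap Y_{n+1} \subseteq \widetilde{\Lambda}\widetilde{U}_p$ and $\lambda B_{L^q(\Omega_d)} \cap T(Y_{n+1}) \subseteq \Lambda_{\ast} U_p$ equivalent, so the suprema agree.

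There is no real obstacle: once the identification via Lemma \ref{lem2.1} is in place, the whole content of the theorem is the routine fact that $\Upsilon$ is a measure-preserving homeomorphism and that each of the four widths is defined only in terms of the Banach-space geometry that an isometry preserves. The only minor point worth keeping track of is that $T$ intertwines two different layers of spaces — the pre-image $L^p$ layer (where the unit balls live) and the target $L^q$ layer (where the approximation error is measured) — but $T$ acts isometrically on both layers simultaneously, so the two layers match consistently in each of the four width computations.
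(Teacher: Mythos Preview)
Your proposal is correct and follows essentially the same route as the paper: both use Lemma \ref{lem2.1} to identify $\Lambda_{\ast} U_p$ with $(\widetilde{\Lambda}\widetilde{U}_p)\circ\Upsilon$ and then exploit that composition with $\Upsilon$ is a surjective isometry between the relevant $L^q$-spaces, which sets up the obvious bijections between subspaces, rank-$n$ operators, and inclusions needed for each width. The paper simply unwinds these bijections explicitly for each of the four definitions, whereas you state the underlying invariance principle once and apply it; the content is the same.
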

\begin{proof}
 By the Lemma \ref{lem2.1}
 \begin{eqnarray*}
 d_n(\Lambda_{\ast} U_p(\Omega_d),L^q(\Omega_d))&=&\displaystyle \inf_{ X_{n}} \hspace{0.2cm}\displaystyle \sup_{f \in \Lambda_{\ast} U_p} \hspace{0.2cm} \displaystyle \inf_{g\in X_{n}}  \norm{f-g}{L^q(\Omega_d)}\\
 &=&\displaystyle \inf_{ X_{n}} \hspace{0.2cm}\displaystyle \sup_{f \in \Lambda_{\ast} U_p} \hspace{0.2cm} \displaystyle \inf_{g\in X_{n}}  \norm{f\circ\Upsilon^{-1}-g\circ\Upsilon^{-1}}{L^q(S^{2d-1})}\\
 &=&\displaystyle \inf_{\widetilde{X}_{n}\circ\Upsilon} \hspace{0.2cm}\displaystyle \sup_{f \in (\widetilde{\Lambda}\widetilde{U}_p)\circ\Upsilon} \hspace{0.2cm} \displaystyle \inf_{g\in \widetilde{X}_{n}\circ\Upsilon}  \norm{f\circ\Upsilon^{-1}-g\circ\Upsilon^{-1}}{L^q(S^{2d-1})}\\
 &=&\displaystyle \inf_{\widetilde{X}_{n}} \hspace{0.2cm}\displaystyle \sup_{f \circ\Upsilon^{-1}\in \widetilde{\Lambda}\widetilde{U}_p} \hspace{0.2cm} \displaystyle \inf_{g \circ\Upsilon^{-1}\in \widetilde{X}_{n}}  \norm{f\circ\Upsilon^{-1}-g\circ\Upsilon^{-1}}{L^q(S^{2d-1})}\\
 &=&\displaystyle \inf_{\widetilde{X}_{n}} \hspace{0.2cm}\displaystyle \sup_{\widetilde{f} \in \widetilde{\Lambda}\widetilde{U}_p} \hspace{0.2cm} \displaystyle \inf_{\widetilde{g} \in \widetilde{X}_{n}}  \norm{\widetilde{f}-\widetilde{g}}{L^q(S^{2d-1})}\\
 &=&d_n(\widetilde{\Lambda}\widetilde{U}_p(S^{2d-1}),L^q(S^{2d-1})),
 \end{eqnarray*}
 where $X_n$ runs over all subspaces of $L^q(\Omega_d)$ of dimension $n$ and $\widetilde{X}_n$ over all subspaces of $L^q(S^{2d-1})$ of dimension $n$;
 \begin{eqnarray*}
d^n(\Lambda_{\ast} U_p(\Omega_d),L^q(\Omega_d))&=&\displaystyle \inf_{ L^{n}} \hspace{0.2cm}\displaystyle \sup_{f \in (\Lambda_{\ast} U_p)\cap L^n}  \norm{f}{L^q(\Omega_d)}\\
&=&\displaystyle \inf_{\widetilde{L}^{n}\circ\Upsilon} \hspace{0.2cm}\displaystyle \sup_{f \in ((\widetilde{\Lambda}\widetilde{U}_p)\circ\Upsilon)\cap(\widetilde{L}^n\circ\Upsilon)}\norm{f\circ\Upsilon^{-1}}{L^q(S^{2d-1})}\\
&=&\displaystyle \inf_{\widetilde{L}^{n}} \hspace{0.2cm}\displaystyle \sup_{f\circ\Upsilon^{-1} \in (\widetilde{\Lambda}\widetilde{U}_p)\cap\widetilde{L}^n}\norm{f\circ\Upsilon^{-1}}{L^q(S^{2d-1})}\\
&=&\displaystyle \inf_{\widetilde{L}^{n}} \hspace{0.2cm}\displaystyle \sup_{\widetilde{f} \in (\widetilde{\Lambda}\widetilde{U}_p)\cap\widetilde{L}^n}\norm{\widetilde{f}}{L^q(S^{2d-1})}\\
 &=&d^n(\widetilde{\Lambda}\widetilde{U}_p(S^{2d-1}),L^q(S^{2d-1})),
 \end{eqnarray*}
 where $L^n$ runs over all subspaces of $L^q(\Omega_d)$ of codimension $n$ and $\widetilde{L}^n$ over all subspaces of $L^q(S^{2d-1})$ of codimension $n$;
 \begin{eqnarray*}
 \delta_n(\Lambda_{\ast} U_p(\Omega_d),L^q(\Omega_d))&=&\displaystyle \inf_{ P_{n}} \hspace{0.2cm}\displaystyle \sup_{f \in \Lambda_{\ast} U_p}  \norm{f-P_{n}(f)}{L^q(\Omega_d)}\\
 &=&\displaystyle \inf_{\widetilde{P}_{n}\circ\Upsilon} \hspace{0.2cm}\displaystyle \sup_{f \in (\widetilde{\Lambda}\widetilde{U}_p)\circ\Upsilon}\norm{f\circ\Upsilon^{-1}-\widetilde{P}_{n}(f\circ\Upsilon^{-1})}{L^q(S^{2d-1})}\\
  &=&\displaystyle \inf_{\widetilde{P}_{n}} \hspace{0.2cm}\displaystyle \sup_{f \circ\Upsilon^{-1}\in \widetilde{\Lambda}\widetilde{U}_p} \norm{f\circ\Upsilon^{-1}-\widetilde{P}_{n}(f\circ\Upsilon^{-1})}{L^q(S^{2d-1})}\\
 &=&\displaystyle \inf_{\widetilde{P}_{n}} \hspace{0.2cm}\displaystyle \sup_{\widetilde{f} \in \widetilde{\Lambda}\widetilde{U}_p} \norm{\widetilde{f}-\widetilde{P}_{n}(\widetilde{f})}{L^q(S^{2d-1})}\\
 &=&\delta_n(\widetilde{\Lambda}\widetilde{U}_p(S^{2d-1}),L^q(S^{2d-1})),
 \end{eqnarray*}
 where $P_n$ runs over all bounded linear operators $P_n:L^q(\Omega_d)\longrightarrow L^q(\Omega_d)$ whose range has dimension $n$ and $\widetilde{P}_n$ over all bounded linear operators $\widetilde{P}_n:L^q(S^{2d-1})\longrightarrow L^q(S^{2d-1})$ whose range has dimension $n$;
 \begin{eqnarray*}
 b_{n}(\Lambda_{\ast} U_p(\Omega_d),L^q(\Omega_d))&=& \displaystyle \sup_{ X_{n+1}} \hspace{0.2cm}\displaystyle \sup\{\lambda:\lambda U_q \cap X_ {n+1}\subseteq \Lambda_{\ast} U_p(\Omega_d) \}\\
 &=&\displaystyle \sup_{ X_{n+1}} \hspace{0.2cm}\displaystyle \sup\{\lambda:\lambda U_q \cap X_ {n+1}\subseteq (\widetilde{\Lambda}\widetilde{U}_p)\circ\Upsilon \}\\
 &=&\displaystyle \sup_{ \widetilde{X}_{n+1}\circ\Upsilon} \hspace{0.2cm}\displaystyle \sup\{\lambda:\lambda (\widetilde{U}_{q}\circ\Upsilon) \cap (\widetilde{X}_{n+1}\circ\Upsilon)\subseteq (\widetilde{\Lambda}\widetilde{U}_p)\circ\Upsilon \}\\
 &=&\displaystyle \sup_{ \widetilde{X}_{n+1}} \hspace{0.2cm}\displaystyle \sup\{\lambda:\lambda \widetilde{U}_{q} \cap \widetilde{X}_{n+1}\subseteq \widetilde{\Lambda}\widetilde{U}_p \}
 =b_n(\widetilde{\Lambda}\widetilde{U}_p(S^{2d-1}),L^q(S^{2d-1})),
 \end{eqnarray*}
 where $X_{n+1}$ runs over all subspaces of $L^q(\Omega_d)$ of dimension $n+1$ and $\widetilde{X}_{n+1}$ over all subspaces of $L^q(S^{2d-1})$ of dimension $n+1$.
 \end{proof}

 \begin{myrem}
  For $\gamma\in \R$, we donote $\Lambda^\gamma=\{\lambda_{m,n}^\gamma\}_{m,n\in \N}$, with $\lambda_{m,n}^\gamma=\lambda^\gamma(m+n)$, where the function 
$\lambda^\gamma:[0,\infty)\longrightarrow \R$ is defined by $\lambda^\gamma(t)=(t(t+2d-2))^{-\gamma/2}$ if $t>0$ and $\lambda(0)=0$. The Sobolev space $\overline{W}_p^\gamma$ on $\Omega_d$ with $\gamma>0$, $1\leq p\leq \infty$, is defined as the set 
  \begin{equation*}
  \overline{W}_p^\gamma=\{\varphi\in L^p(\Omega_d):\Lambda^{-\gamma}\varphi\in L^p(\Omega_d)\}
  =\{c+\Lambda^\gamma\psi:c\in\R,\hspace{0.2cm}\psi\in L^p(\Omega_d)\},
 \end{equation*}
 endowed with norm 
 \begin{equation*}
 \norm{\varphi}{\overline{W}_p^\gamma}=\norm{\Lambda^{-\gamma}\varphi}{p}+\norm{\varphi}{p}.
\end{equation*}
We define the Sobolev classe $W_p^\gamma$ as the set
 \begin{equation*}
  W_p^\gamma=\{\Lambda^{\gamma}\varphi : \varphi \in U_p\}=\Lambda^\gamma U_p,\hspace{0.4cm}\gamma>0,\hspace{0.4cm}1\leq p\leq\infty.
 \end{equation*}
 Now let $\widetilde{\Lambda}^\gamma=\{\lambda^{\gamma}_k\}_{k \in \N}$, $\lambda^{\gamma}_k=\lambda^{\gamma}(k)$. The Sobolev classe $\widetilde{W}_p^\gamma$ on $S^{2d-1}$ is given by 
  \begin{equation*}
   \widetilde{W}_p^\gamma=\{\widetilde{\Lambda}^\gamma \varphi ;\varphi \in \widetilde{U}_p\}=\widetilde{\Lambda}^\gamma\widetilde{U}_p.
 \end{equation*}
 \end{myrem}
\noindent Applying the Lemma \ref{lem2.1} for the operator $\Lambda^{\gamma}$ we get 
 \begin{equation*}
  W_p^\gamma= \{\varphi \circ\Upsilon ; \varphi\in \widetilde{W}_p^\gamma\},
\end{equation*}
and by Theorem \ref{lem2.2} we have that 
\begin{equation}\label{eq:(2.3.4)}
d_n(W_p^\gamma(\Omega_d),L^q(\Omega_d))= d_n(\widetilde{W}_p^\gamma(S^{2d-1}),L^q(S^{2d-1})),
\end{equation}
 \begin{equation}\label{eq:(0010)}
d^n(W_p^\gamma(\Omega_d),L^q(\Omega_d))= d^n(\widetilde{W}_p^\gamma(S^{2d-1}),L^q(S^{2d-1})),
\end{equation}
\begin{equation}\label{eq:(0011)}
\delta_n(W_p^\gamma(\Omega_d),L^q(\Omega_d))= \delta_n(\widetilde{W}_p^\gamma(S^{2d-1}),L^q(S^{2d-1})),
\end{equation}
 \begin{equation}\label{eq:(0012)}
b_n(W_p^\gamma(\Omega_d),L^q(\Omega_d))= b_n(\widetilde{W}_p^\gamma(S^{2d-1}),L^q(S^{2d-1})).
\end{equation}
\indent The estimates in the theorem below  follow as a consequence of (\ref{eq:(2.3.4)}) and of estimates already obtained for the Kolmogorov $n$-width for the Sobolev classes $\widetilde{W}_p^\gamma$ on the real sphere $S^{2d-1}$, which can be found in  \cite{TozoniI, TozoniII, KushpelII, KushpelIII, KushpelV, Kushpel&Tozoni, Tozoni}.

\begin{myth}\label{myth2.1}
  For the Kolmogorov n-width of Sobolev classes on $\Omega_d$, we have in particular the following estimates:
  \begin{itemize}
  \item[(a)] if $1\leq p=q\leq \infty$, $2\leq q\leq p<\infty$, $\gamma>0$,
  \begin{center}
   $d_n(W_p^\gamma(\Omega_d),L^q(\Omega_d))\asymp n^{-\gamma/(2d-1)}$;
  \end{center}
   \item[(b)] if $1\leq p=q\leq \infty$, $2\leq q\leq p\leq\infty$, $\gamma>0$,
   \begin{center}
    $n^{-\gamma/(2d-1)}(\ln n)^{-1/2}\ll d_n(W_p^\gamma(\Omega_d),L^q(\Omega_d))\ll n^{-\gamma/(2d-1)}$;
   \end{center}
   \item[(c)] if $2\leq q\leq p <\infty$, $\gamma/(2d-1)>1/2$,
   \begin{center}
    $d_n(W_p^\gamma(\Omega_d),L^q(\Omega_d))\asymp n^{-\gamma/(2d-1)}$;
   \end{center}
   
   \item[(d)] if $2\leq q\leq p \leq \infty$, $\gamma/(2d-1)>1/2$,
   \begin{center}
    $n^{-\gamma/(2d-1)}\ll d_n(W_p^\gamma(\Omega_d),L^q(\Omega_d))\ll n^{-\gamma/(2d-1)}(\ln n)^{1/2}$;
   \end{center}
   
    \item[(e)] if $1\leq p\leq q \leq 2$, $\gamma/(2d-1)>1/p-1/q$,
    \begin{center}
   $d_n(W_p^\gamma(\Omega_d),L^q(\Omega_d))\asymp n^{-\gamma/(2d-1)+1/p-1/q}$; 
  \end{center}
  
   \item[(f)] if $1\leq p\leq q \leq 2$, $\gamma>0$,
    \begin{center}
   $d_n(W_p^\gamma(\Omega_d),L^q(\Omega_d))\asymp n^{-\gamma/(2d-1)}$; 
  \end{center}
  
  \item[(g)] if $1\leq p\leq q \leq 2$, $\gamma>0$,
    \begin{center}
   $n^{-\gamma/(2d-1)}(\ln n)^{-1/2}\ll d_n(W_p^\gamma(\Omega_d),L^q(\Omega_d))\ll n^{-\gamma/(2d-1)}$;
  \end{center}
  
  \item[(h)] if $1\leq p \leq 2 \leq q < \infty$, $\gamma/(2d-1) > 1/p$,
  \begin{center}
   $d_n(W_p^\gamma(\Omega_d),L^q(\Omega_d))\asymp n^{-\gamma/(2d-1)+1/p-1/2}$;
  \end{center}
   
   \item[(i)] if $1\leq p \leq 2 \leq q \leq \infty$, $\gamma/(2d-1) >1/p$,
  \begin{center}
   $n^{-\gamma/(2d-1)+1/p-1/2}\ll d_n(W_p^\gamma(\Omega_d),L^q(\Omega_d))\ll n^{-\gamma/(2d-1)+1/p-1/2}(\ln n)^{1/2}$.
  \end{center}
  
  \end{itemize}

 \end{myth}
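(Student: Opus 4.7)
The plan is to reduce every case (a)--(i) directly to already-known estimates for Kolmogorov widths of Sobolev classes $\widetilde{W}_p^\gamma$ on the real sphere $S^{2d-1}$, via the identity
\begin{equation*}
 d_n(W_p^\gamma(\Omega_d),L^q(\Omega_d))= d_n(\widetilde{W}_p^\gamma(S^{2d-1}),L^q(S^{2d-1}))
\end{equation*}
established in (\ref{eq:(2.3.4)}). So the proof is not really a fresh computation but a careful bookkeeping argument: once we recognize that the multiplier sequence $\lambda^\gamma_{m,n}=\lambda^\gamma(m+n)$ defining $W_p^\gamma$ on $\Omega_d$ is of the form $\Lambda_*$ studied in Lemma~\ref{lem2.1}, the widths on $\Omega_d$ collapse to widths on the ambient real sphere $S^{2d-1}$, whose dimension as a real manifold is $2d-1$.

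First I would recall that the multiplier $\lambda^\gamma(t)=(t(t+2d-2))^{-\gamma/2}$ (with $\lambda^\gamma(0)=0$) is bounded on $[0,\infty)$ for $\gamma>0$, so the hypothesis of Lemma~\ref{lem2.1} is met; moreover the boundedness of $\Lambda^\gamma:L^p(\Omega_d)\to L^q(\Omega_d)$ in the required exponent range follows from the corresponding boundedness of $\widetilde{\Lambda}^\gamma$ on $S^{2d-1}$, again by the norm identification $\|\Lambda^\gamma_*\|_{p,q}=\|\widetilde{\Lambda}^\gamma\|_{p,q}$ proved there. Then I would invoke Theorem~\ref{lem2.2} (specialized to $\Lambda_*^\gamma=\Lambda^\gamma$), giving the displayed equality of Kolmogorov widths. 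Finally, for each pair of exponents $(p,q)$ and each range of $\gamma$ listed in (a)--(i), I would cite the matching estimate for $d_n(\widetilde{W}_p^\gamma(S^{N}),L^q(S^{N}))$ from the references \cite{TozoniI,TozoniII,KushpelII,KushpelIII,KushpelV,Kushpel&Tozoni,Tozoni} with $N=2d-1$, so that the exponent $\gamma/(N-1+1)=\gamma/N$ becomes exactly $\gamma/(2d-1)$, and the cross terms $1/p-1/q$, $1/p-1/2$, and the $(\ln n)^{\pm 1/2}$ logarithmic factors transfer verbatim.

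Concretely, case (a) comes from the sharp estimate $d_n(\widetilde{W}_p^\gamma(S^{2d-1}),L^q(S^{2d-1}))\asymp n^{-\gamma/(2d-1)}$ for $p=q$ or $2\leq q\leq p<\infty$; cases (b), (d), (g), (i) reproduce the corresponding upper/lower bounds with the extra $(\ln n)^{\pm 1/2}$ gap that appears in the literature when either $p$ or $q$ equals $\infty$ (or in the small-smoothness regime); cases (c), (e), (f), (h) follow from the sharp small-smoothness and mixed-norm ($1\leq p\leq q\leq 2$ or $1\leq p\leq 2\leq q$) estimates established in the same references. The whole proof is just a table-lookup argument once (\ref{eq:(2.3.4)}) is in hand.

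The only step that requires attention, and which I would flag as the \emph{point to double-check}, is the matching of hypotheses: each cited theorem on $S^{2d-1}$ is valid in a specific range of $p,q,\gamma$ that must be verified to coincide with the range stated in (a)--(i) here. In particular, the smoothness assumptions $\gamma/(2d-1)>1/2$ in (c)--(d) and $\gamma/(2d-1)>1/p$ in (h)--(i), and $\gamma/(2d-1)>1/p-1/q$ in (e), are precisely those under which the relevant sharp estimates on $S^{2d-1}$ are available; no new analytic work is needed beyond checking that our $\Lambda^\gamma$ produces the Sobolev class $\widetilde{W}_p^\gamma$ treated in those papers. No logarithmic factor or exponent needs to be recomputed --- all of them transfer through the isometry $f\mapsto f\circ\Upsilon$ because $\sigma_d=\tau_{2d}\circ\Upsilon$ preserves $L^q$ norms.
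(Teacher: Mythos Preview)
Your proposal is correct and matches the paper's approach exactly: the paper does not give a separate proof of this theorem but simply states (just before the theorem) that the estimates follow from the identity (\ref{eq:(2.3.4)}) together with the known estimates for $d_n(\widetilde{W}_p^\gamma(S^{2d-1}),L^q(S^{2d-1}))$ from \cite{TozoniI, TozoniII, KushpelII, KushpelIII, KushpelV, Kushpel&Tozoni, Tozoni}. Your write-up is in fact more detailed than the paper's own justification.
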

 Other estimates for the Kolmogorov $n$-widths of the Sobolev classes $W_p^\gamma(\Omega_d)$, as also to the $n$-widths of Gelfand, linear and Bernstein, can be obtained using (\ref{eq:(0010)})-(\ref{eq:(0012)}).
 \begin{myrem}
   For $\gamma,\xi\in \R$, $\gamma>0$ and $\xi\geq 0$ , we denote $\Lambda_{\ast}^{(1)}=\{\lambda_{m,n}^{(1),\ast}\}_{m,n\in \N}$, with $\lambda_{m,n}^{(1),\ast}=\lambda^{(1)}(m+n)$, where the function 
$\lambda^{(1)}:[0,\infty)\longrightarrow \R$ is defined by $\lambda^{(1)}(t)=t^{-\gamma}(\ln t)^{-\xi}$ for $t>1$ and $\lambda^{(1)}(t)=0$ for $0\leq t\leq 1$. We have that $\Lambda_{\ast}^{(1)} U_p$ is a set of finitely differentiable functions on $\Omega_d$, in particular, $\Lambda_{\ast}^{(1)} U_p$ is a Sobolev-type class 
if $\xi=0$. Now for $\gamma,r \in \R$, with $\gamma,r>0$, we denote $\Lambda_{\ast}^{(2)}=\{\lambda_{m,n}^{(2),\ast}\}_{m,n\in \N}$ with $\lambda_{m,n}^{(2),\ast}=\lambda^{(2)}(m+n)$, where the function $\lambda^{(2)}:[0,\infty)\longrightarrow \R$ is defined by $\lambda^{(2)}(t)=e^{-\gamma t^r}$ for $t\geq 0$. We have that $\Lambda_{\ast}^{(2)}U_p$ is a set of infinitely differentiable $(0<r<1)$ or analytic $(r\geq 1)$ functions on $\Omega_d$.
 \end{myrem}
 \indent The results of the next three theorems follow as a consequence of the Theorem \ref{lem2.2} and of results proved in \cite{Tozoni}.
 \begin{myth}
  If $1\leq p\leq \infty$, $2\leq q\leq\infty$ and $\gamma/(2d-1) > 1/p$, we have
\begin{center}
 $d_n(\Lambda_{\ast}^{(1)}U_p,L^q)\ll n^{-\gamma/(2d-1)+(1/p-1/2)_+}(\ln n)^{-\xi}\left \{ \begin{matrix} q^{1/2}, &\mbox{if } q<\infty,
\\ (\ln n)^{1/2}, & \mbox{if }q=\infty,\end{matrix}\right.$
\end{center}
\noindent and for $\gamma/(2d-1)> 1/p-1/q$,
 \begin{center}
 $d_n(\Lambda_{\ast}^{(1)}U_p,L^q)\gg n^{-\gamma/(2d-1)}(\ln n)^{-\xi}\vartheta_n$,
\end{center}
where
\begin{center}
 $\vartheta_n=\left \{ \begin{matrix} 1, &\mbox{if }1\leq p\leq 2, 1<q\leq 2,
\\ 1, & \mbox{if }2\leq p<\infty, 2\leq q\leq\infty,
\\ 1,& \mbox{if }1\leq p\leq 2 \leq q\leq\infty,
\\ (\ln n)^{-1/2}, & \mbox{if }1\leq p \leq 2, q=1,
\\ (\ln n)^{-1/2}, & \mbox{if }p=\infty, 2\leq q \leq \infty.\end{matrix}\right.$
\end{center}

\noindent In particular, if $\gamma/(2d-1)> 1/p, 2\leq p,q<\infty$,
\begin{equation*}
 d_n(\Lambda_{\ast}^{(1)}U_p,L^q)\asymp n^{-\gamma/(2d-1)}(\ln n)^{-\xi}.
\end{equation*}
 \end{myth}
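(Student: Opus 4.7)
The plan is to reduce the estimates on the complex sphere to the corresponding estimates on the real sphere $S^{2d-1}$, and then invoke the results from \cite{Tozoni}. Concretely, observe that $\lambda^{(1)}$ is a bounded function on $[0,\infty)$, and the multiplier sequence $\Lambda_*^{(1)}=\{\lambda^{(1)}(m+n)\}_{m,n\in\N}$ on $\Omega_d$ is exactly of the type covered by Lemma \ref{lem2.1}, with associated one-index sequence $\widetilde{\Lambda}^{(1)}=\{\lambda^{(1)}(k)\}_{k\in\N}$ on $S^{2d-1}$. Therefore Theorem \ref{lem2.2} applies and yields
\begin{equation*}
d_n(\Lambda_*^{(1)} U_p(\Omega_d), L^q(\Omega_d)) = d_n(\widetilde{\Lambda}^{(1)} \widetilde{U}_p(S^{2d-1}), L^q(S^{2d-1})).
\end{equation*}

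Once this identification is in place, the proof reduces to citing the upper and lower bounds for the multiplier operator $\widetilde{\Lambda}^{(1)}$ on the real sphere $S^{2d-1}$, which were established in \cite{Tozoni}. The real sphere $S^{2d-1}$ has dimension $2d-1$, which is why the exponent on $n$ in every estimate involves $\gamma/(2d-1)$ rather than $\gamma/(d-1)$: one simply substitutes the dimension parameter $2d-1$ into the generic formulas from \cite{Tozoni} for $\widetilde{\Lambda}^{(1)}$ on a real sphere. Each range of $(p,q)$ listed in the statement corresponds to a case already treated in that reference, and the logarithmic correction $\vartheta_n$ is exactly the case-by-case factor obtained there.

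Hence the proof has essentially two lines: invoke Theorem \ref{lem2.2} to transfer the $n$-width to $S^{2d-1}$, and then quote the corresponding bound from \cite{Tozoni} for the real-sphere multiplier $\widetilde{\Lambda}^{(1)}$ with dimension parameter $2d-1$. The sharp order statement for $\gamma/(2d-1)>1/p$ and $2\leq p,q<\infty$ follows by matching the upper bound (which in that range has $\vartheta_n=1$ and no logarithmic blow-up in $q$) with the corresponding lower bound from the general table.

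The only potential obstacle is purely bookkeeping: making sure that the conditions stated in \cite{Tozoni} for the lower estimates on $\widetilde{\Lambda}^{(1)}$ cover each of the five subcases forming $\vartheta_n$, and that the upper estimate there is stated with the right dependence on $q$ (the distinction $q<\infty$ versus $q=\infty$ giving $q^{1/2}$ vs.\ $(\ln n)^{1/2}$). Since all of this is already proven on $S^{2d-1}$ in \cite{Tozoni}, no additional analysis on $\Omega_d$ is needed beyond the transfer identity from Theorem \ref{lem2.2}.
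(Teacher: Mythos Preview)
Your proposal is correct and matches the paper's approach exactly: the paper states that this theorem (together with the two that follow it) is a consequence of Theorem~\ref{lem2.2} and of the results proved in \cite{Tozoni}, which is precisely the transfer-and-cite argument you describe.
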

\begin{myth}
 For $\gamma,r\in\R$ such that  $\gamma>0$, $0<r\leq1$ and for all  $k\in\N$ we have that 
\begin{equation*}
 d_k(\Lambda_{\ast}^{(2)}U_p,L^q)\gg e^{-\mathcal{R}_{\ast}k^{r/(2d-1)}}\vartheta_k,
\end{equation*}
\noindent where 
 \begin{equation}
  \mathcal{R}_{\ast}=\gamma\ \left(\frac{(2d-1)!}{2}\right) ^{r/(2d-1)},\nonumber
 \end{equation}
and  $\vartheta_k$ is given in the previous theorem.
\end{myth}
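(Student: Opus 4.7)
The plan is to reduce to a known result on the real sphere via the transference machinery established earlier in the paper. Since $\lambda^{(2)}(t) = e^{-\gamma t^r}$ is bounded on $[0,\infty)$ and $\lambda^{(2),\ast}_{m,n} = \lambda^{(2)}(m+n)$, Lemma \ref{lem2.1} applies to $\Lambda_{\ast}^{(2)}$, and Theorem \ref{lem2.2} yields
\begin{equation*}
d_k(\Lambda_\ast^{(2)} U_p(\Omega_d), L^q(\Omega_d)) \;=\; d_k(\widetilde{\Lambda}^{(2)} \widetilde{U}_p(S^{2d-1}), L^q(S^{2d-1})),
\end{equation*}
where $\widetilde{\Lambda}^{(2)} = \{e^{-\gamma k^r}\}_{k \in \N}$ is the scalar multiplier on the real sphere $S^{2d-1}$. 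So the whole task is to transfer a known lower bound from the real-sphere side.

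Next, I would invoke the lower bound estimates proved in \cite{Tozoni} for Kolmogorov $n$-widths of multiplier operators on $S^{N-1}$ associated with rapidly decaying sequences of the form $e^{-\gamma k^r}$, $0 < r \leq 1$. Applied to $N = 2d$, those results supply a lower bound of the form $e^{-\mathcal{R}\, k^{r/(2d-1)}}\,\vartheta_k$, where the factor $\vartheta_k$ depends only on the pair $(p,q)$ and not on the decay rate; one then checks that this factor is exactly the one listed in the previous theorem, so that it carries over unchanged.

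Third, I would identify the constant $\mathcal{R}$ with the stated value $\mathcal{R}_\ast = \gamma\bigl((2d-1)!/2\bigr)^{r/(2d-1)}$. This arises from the asymptotics $\sum_{k \leq N} \dim \mathcal{H}_k(S^{2d-1}) \sim 2N^{2d-1}/(2d-1)!$: the Bernstein-type lower-bound construction in \cite{Tozoni} works with a block of spherical harmonics up to some degree $N$ whose total dimension is comparable to $k$, forcing $N \asymp (k(2d-1)!/2)^{1/(2d-1)}$; evaluating the multiplier $\lambda^{(2)}(N)$ at this $N$ produces precisely the factor $e^{-\mathcal{R}_\ast k^{r/(2d-1)}}$.

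The principal obstacle is keeping track of the sharp leading coefficient in the exponent. The decay rate $r$ is slow enough ($0 < r \leq 1$) that constants in the exponent of $e^{-\gamma k^r}$ do matter asymptotically, so one must check that the dimension-counting step underlying the lower bound in \cite{Tozoni} delivers exactly $\gamma((2d-1)!/2)^{r/(2d-1)}$ rather than merely a constant of the same order. Once this verification is done, the identity of Theorem \ref{lem2.2} closes the gap between the complex-sphere statement and the corresponding real-sphere result with no further work.
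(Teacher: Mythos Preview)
Your proposal is correct and follows exactly the approach the paper takes: the paper states explicitly that this theorem (together with the two surrounding ones) ``follow as a consequence of the Theorem \ref{lem2.2} and of results proved in \cite{Tozoni},'' which is precisely the transference-then-cite argument you outline. Your additional verification of the constant $\mathcal{R}_\ast$ via the asymptotic $\dim\mathcal{T}_N(S^{2d-1})\sim 2N^{2d-1}/(2d-1)!$ is a useful detail that the paper leaves implicit.
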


\begin{myth}
  For $0<r\leq1$, $1\leq p\leq\infty$, $ 2\leq q\leq\infty$ and for all $k\in\N$, we have that 
 \begin{equation*}
 d_k(\Lambda_{\ast}^{(2)}U_p,L^q)\ll e^{-\mathcal{R}_{\ast}k^{r/(2d-1)}}k^{(1-r/(2d-1))(1/p-1/2)_+}\left \{ \begin{matrix} q^{1/2}, &\mbox{if } q< \infty,\\ 
(\ln k)^{1/2}, & \mbox{if }q=\infty,\end{matrix}\right.
\end{equation*}
where $\mathcal{R}_{\ast}$ is the constant given in the previous theorem.
\end{myth}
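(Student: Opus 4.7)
The plan is to reduce the upper bound on the complex sphere $\Omega_d$ to the corresponding one on the real sphere $S^{2d-1}$ via Theorem \ref{lem2.2}, and then to appeal to the estimate for exponentially decaying multiplier operators on real spheres proved in \cite{Tozoni}.

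Concretely, since $\lambda^{(2)}(t)=e^{-\gamma t^r}$ is bounded on $[0,\infty)$, Theorem \ref{lem2.2} applies to $\Lambda_{\ast}^{(2)}$ and its scalar companion $\widetilde{\Lambda}^{(2)}=\{e^{-\gamma k^r}\}_{k\in\N}$, giving
\begin{equation*}
d_k\bigl(\Lambda_{\ast}^{(2)}U_p(\Omega_d),L^q(\Omega_d)\bigr)=d_k\bigl(\widetilde{\Lambda}^{(2)}\widetilde{U}_p(S^{2d-1}),L^q(S^{2d-1})\bigr).
\end{equation*}
It therefore suffices to establish the stated upper bound on $S^{2d-1}$; this is exactly the specialization to dimension $N=2d-1$ of the upper estimate for exponentially decaying multipliers on a real sphere $S^{N}$ obtained in \cite{Tozoni}.

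The constant $\mathcal{R}_{\ast}$ on the exponential factor arises naturally from the asymptotic $\sum_{j=0}^{M}\dim\mathcal{H}_{j}(S^{N})\sim 2M^{N}/N!$: the natural choice of approximating subspace $X_k=\bigoplus_{j\leq M}\mathcal{H}_{j}$ with $\dim X_k\asymp k$ forces $M\asymp(kN!/2)^{1/N}$, so that the tail is controlled by $|\lambda^{(2)}(M)|=e^{-\gamma M^r}=\exp(-\gamma(N!/2)^{r/N}k^{r/N})$; with $N=2d-1$ this produces exactly $e^{-\mathcal{R}_{\ast} k^{r/(2d-1)}}$. The residual factor $k^{(1-r/(2d-1))(1/p-1/2)_+}$ together with the $q^{1/2}$ (respectively $(\ln k)^{1/2}$) term reflects the cost of passing from the underlying $L^2$-estimate to the $L^p\to L^q$ setting via the norms of the harmonic projectors on $S^{N}$.

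The transfer step is immediate from Theorem \ref{lem2.2}, so the main work in a self-contained proof would be establishing the real-sphere upper bound with the precise constant $\gamma(N!/2)^{r/N}$ and the stated $p,q$-dependence, which is the technically delicate part handled in \cite{Tozoni}. Consequently, the proof here reduces to invoking Theorem \ref{lem2.2} followed by the substitution $N=2d-1$ in that reference.
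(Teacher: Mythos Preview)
Your proposal is correct and follows essentially the same route as the paper: the paper states explicitly that this theorem (together with the two surrounding it) follows as a consequence of Theorem~\ref{lem2.2} and results proved in \cite{Tozoni}, which is exactly your reduction via the identity $d_k(\Lambda_{\ast}^{(2)}U_p(\Omega_d),L^q(\Omega_d))=d_k(\widetilde{\Lambda}^{(2)}\widetilde{U}_p(S^{2d-1}),L^q(S^{2d-1}))$ followed by specialization of the real-sphere estimate to dimension $2d-1$. One minor point: to invoke Theorem~\ref{lem2.2} you need $\Lambda_{\ast}^{(2)}$ bounded from $L^p$ to $L^q$ (not merely $\lambda^{(2)}$ bounded), but this holds by an argument entirely analogous to Remark~\ref{rem2.4}.
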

\indent Other estimates for the Kolmogorov $n$-width for the operator $\Lambda_{\ast}^{(2)}$ when $r\in\R$, $r>1$, can also be obtained from results in \cite{Tozoni}. 
 
\section{Estimates for Levy means}
In this paper, from this point forward, we will consider only the multiplier operators $\Lambda$ associated with the norm $|(m,n)|=\max \{m,n\}$.\\
\indent Given $l,N,m,n,M_1,M_2 \in \N$, with $M_1<M_2$, we consider the following notations:
\begin{center}
  $A_l=\{(m,n)\in \N^2:|(m,n)|\leq l\},\hspace{0.4cm}A_{-1}=\emptyset\hspace{0.4cm},a_l=\#(A_l\setminus A_{l-1})$  
 \end{center}
 \begin{equation*}
 \mathcal{H}_l=\displaystyle\bigoplus_{(m,n)\in A_l\setminus A_{l-1} }\mathcal{H}_{m,n},\hspace{0.6cm}d_l= \dim \mathcal{H}_{l}=\displaystyle\sum_{(m,n)\in A_l\setminus A_{l-1}}d_{m,n},
\end{equation*}
\begin{equation*}
 \mathcal{T}_{N}=\displaystyle\bigoplus_{l=0}^N\mathcal{H}_l=\displaystyle\bigoplus_{(m,n)\in A_N }\mathcal{H}_{m,n},\hspace{0.6cm}\mathcal{T}_{M_1,M_2}=\displaystyle\bigoplus_{(m,n)\in A_{M_2}\setminus A_{M_1}} \mathcal{H}_{m,n}\hspace{0.6cm}and\hspace{0.6cm}s=\dim\mathcal{T}_{M_1,M_2}.
\end{equation*}
In this section and in the following sections we will consider $\mathcal{H}_{m,n}$ and $\mathcal{T}_{M_1,M_2}$ as real vector subspaces of the real vector spaces $L^p(\Omega_d,\R)$, $1\leq p\leq\infty$.
\begin{mypro}\label{pr:(3.1.1)}
 There are positive constants $C_1,C_2,C_3$ such that
 \begin{equation}\label{eq:(3.1.1)}
  \frac{2(2d-1)}{d!(d-1)!}l^{2d-2}-C_1l^{2d-3}\leq \dim \mathcal{H}_{l}\leq \frac{2(2d-1)}{d!(d-1)!}l^{2d-2}+ C_2l^{2d-3},
 \end{equation}
 \begin{equation}\label{eq:(3.1.2)}
  \frac{2}{d!(d-1)!}N^{2d-1}\leq \dim \mathcal{T}_{N}\leq \frac{2}{d!(d-1)!}N^{2d-1}+C_3 N^{2d-2}.
 \end{equation}
In particular, $\dim \mathcal{H}_{l}\asymp l^{2d-2}$ and $\dim \mathcal{T}_{N}\asymp N^{2d-1}$.
\end{mypro}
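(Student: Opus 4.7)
The plan is to reduce both inequalities to explicit arithmetic sums using the dimension estimate (\ref{eq:(2.2.2)}) already at hand, combined with standard power-sum asymptotics.

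First I would describe the ``layer'' $A_l\setminus A_{l-1}$ exactly: it consists of all pairs $(m,n)\in\N^2$ with $\max\{m,n\}=l$, i.e.\ the ``L-shape''
\[
\{(l,0),(l,1),\ldots,(l,l)\}\;\cup\;\{(0,l),(1,l),\ldots,(l-1,l)\},
\]
which has $a_l=2l+1$ elements. Since $d_{m,n}=d_{n,m}$ (visible from the explicit formula), this gives
\[
\dim\mathcal{H}_l\;=\;d_{l,l}\;+\;2\sum_{n=0}^{l-1}d_{l,n}.
\]
The terms with $n=0$ and $n=l$ are handled directly from the closed form for $d_{m,n}$: $d_{l,0}=\binom{l+d-1}{l}=O(l^{d-1})$ and $d_{l,l}=O(l^{2d-3})$, so both are absorbed in the $l^{2d-3}$ error term.

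Next I would apply (\ref{eq:(2.2.2)}) to the ``bulk'' range $1\le n\le l-1$. The main term contributes
\[
\frac{2}{(d-1)!(d-2)!}\sum_{n=1}^{l-1}(l+n)(ln)^{d-2}\;=\;\frac{2\,l^{d-2}}{(d-1)!(d-2)!}\sum_{n=1}^{l-1}(l+n)\,n^{d-2}.
\]
Using the elementary power-sum asymptotics
\[
\sum_{n=1}^{l-1}n^{d-2}=\frac{l^{d-1}}{d-1}+O(l^{d-2}),\qquad \sum_{n=1}^{l-1}n^{d-1}=\frac{l^{d}}{d}+O(l^{d-1}),
\]
I would obtain $\sum_{n=1}^{l-1}(l+n)n^{d-2}=\frac{2d-1}{d(d-1)}\,l^{d}+O(l^{d-1})$, which, multiplied by $2l^{d-2}/((d-1)!(d-2)!)$, yields the claimed leading coefficient $\frac{2(2d-1)}{d!(d-1)!}$ for $l^{2d-2}$. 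The upper-bound error term from (\ref{eq:(2.2.2)}) is bounded by
\[
2C\sum_{n=1}^{l-1}(l+n)\,l^{d-2}n^{d-3}\;\ll\;l^{2d-3},
\]
again of the advertised order (a separate direct count handles $d=2$, where the formula reduces to $d_{m,n}=m+n+1$ and $\dim\mathcal{H}_l=3l^2+3l+1$). Combining the main and error contributions proves (\ref{eq:(3.1.1)}).

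For (\ref{eq:(3.1.2)}) I would simply sum (\ref{eq:(3.1.1)}) over $l=0,\ldots,N$, using once more
$\sum_{l=0}^{N}l^{2d-2}=\frac{N^{2d-1}}{2d-1}+O(N^{2d-2})$, so that the factor $(2d-1)$ cancels and we obtain the leading constant $\frac{2}{d!(d-1)!}$ together with an $O(N^{2d-2})$ remainder; the lower bound is obtained from the same computation by discarding the nonnegative remainder. The only mildly delicate point is bookkeeping of the boundary terms ($n=0$ and $n=l$) and verifying that the $O(l^{2d-3})$ error in (\ref{eq:(2.2.2)}) behaves correctly in the borderline cases $d=2,3$; these are minor, and the routine power-sum manipulations do the rest.
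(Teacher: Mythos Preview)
Your proposal is correct and follows exactly the route the paper indicates: the paper's proof is the one-line remark that (\ref{eq:(3.1.1)}) and (\ref{eq:(3.1.2)}) ``are easily obtained using (\ref{eq:(2.2.2)}) and integration to estimate finite sums,'' and you have supplied precisely that computation in full detail, including the correct leading constant $\tfrac{2(2d-1)}{d!(d-1)!}$. The only place to tighten is the sentence on the lower bound in (\ref{eq:(3.1.2)}): summing the lower bound in (\ref{eq:(3.1.1)}) gives $\tfrac{2}{d!(d-1)!}N^{2d-1}-O(N^{2d-2})$, so obtaining the stated inequality without a negative remainder requires one more look at the integral comparison (or a direct double sum over $A_N$ using the lower bound in (\ref{eq:(2.2.2)})), but this is routine and does not change the method.
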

 \begin{proof}
  The proofs of (\ref{eq:(3.1.1)}) and (\ref{eq:(3.1.2)}) are easily obtained using (\ref{eq:(2.2.2)}) and integration to estimate finite sums.
 \end{proof}
\indent Let $ E =(\mathbb{R}^n,\|\cdot\|)$ be a $n$-dimensional real Banach space with unit ball $B_E=\{x\in \R^n:\norm{x}{}\leq1\}$, and let $|||x|||=(\sum_{k=1}^{n}|x_k|^2)^{1/2}$ and $\langle x,y\rangle=\sum_{k=1}^{n}x_ky_k$ be the euclidean norm and the inner product on $\R^n$. Let $S^{n-1}=\{x\in \R^n:|||x|||= 1\}$ be the euclidean unit sphere in $\R^n$. The Levy mean of the norm $\norm{\cdot}{}$ on $\R^n$ is defined by 
\begin{equation*}
 M(\norm{\cdot}{})=M(\R^n,\norm{\cdot}{})=\left(\displaystyle \displaystyle\int_{S^{n-1}} \norm{x}{}^2  d\mu(x)\right)^{1/2}.
 \end{equation*}
where $\mu$ denotes the normalized Lebesgue measure on $S^{n-1}$.
\begin{myrem}\label{myrem3.8}
 For $M_{1}+1\leq l\leq M_2$ let $A_l\setminus A_{l-1}=\{(m_j^l,n_j^l):1\leq j\leq a_l\}$ such that $|(m_j^l,n_j^l)|\leq|(m_{j+1}^l,n_{j+1}^l)|$ for $1\leq j\leq a_l-1$ and let $\{Y_1^{(m_j^l,n_j^l)},\ldots,Y_{d_{m_j^l,n_j^l}}^{(m_j^l,n_j^l)}\}$ be an orthonormal basis of $\mathcal{H}_{m_j^l,n_j^l}$ consisting of only real functions. We denote $Y_i^{l,j}=Y_i^{(m_j^l,n_j^l)}$, $d_{l,j}=d_{m_j^l,n_j^l}$ and $\mathcal{H}_{l,j}=\mathcal{H}_{m_j^l,n_j^l}$. We consider the orthonormal basis
\begin{center}
$\{\xi_k\}_{k=1}^s=\displaystyle\bigcup_{l=M_1+1}^{M_2}\displaystyle\bigcup_{j=1}^{a_l}\{Y_1^{l,j},\ldots,Y_{d_{l,j}}^{l,j}\}$
\end{center}
of $\mathcal{T}_{M_1,M_2}$ endowed with the order
$Y_1^{M_1+1,1},\ldots,Y_{d_{M_1+1,1}}^{M_1+1,1},\ldots,Y_1^{M_1+1,a_{M_1+1}},\ldots,Y_{d_{M_1+1,a_{M_1+1}}}^{M_1+1,a_{M_1+1}},\\ \ldots,
 Y_1^{M_2,1},\ldots,Y_{d_{M_2,1}}^{M_2,1},\ldots,
 Y_1^{M_2,a_{M_2}},\ldots,Y_{d_{M_2,a_{M_2}}}^{M_2,a_{M_2}}$.
 Let $J:\R^s\longrightarrow \mathcal{T}_{M_1,M_2}$ be the coordinate isomorphism which assigns to $\alpha=(\alpha_1,\ldots ,\alpha_s)\in\R^s$ the function
 \begin{center}
 $J(\alpha)=\displaystyle\sum_{k=1}^s\alpha_k\xi_k=\displaystyle\sum_{l=M_1+1}^{M_2}\displaystyle\sum_{j=1}^{a_l}\displaystyle\sum_{i=1}^{d_{l,j}}\alpha_i^{l,j}Y_i^{l,j}$
\end{center}
where $(\alpha_1,\ldots ,\alpha_s)=(\alpha_1^{M_1+1,1},\ldots ,\alpha_{d_{M_1+1,1}}^{M_1+1,1},\ldots,\alpha_1^{M_1+1,a_{M_1+1}},\ldots ,\alpha_{d_{M_1+1,a_{M_1+1}}}^{M_1+1,a_{M_1+1}},\ldots,$\\
$\alpha_1^{M_2,1},\ldots,\alpha_{d_{M_2,1}}^{M_2,1},\ldots,\alpha_1^{M_2,a_{M_2}},\ldots,\alpha_{d_{M_2,a_{M_2}}}^{M_2,a_{M_2}})$.\\
\indent Consider a function $\lambda:[0,\infty)\longrightarrow \R$, such that  $\lambda(t)\neq 0$ for $t\geq 0$ and let $\Lambda=\{\lambda_{m,n}\}_{m,n\in \N}$ be the sequence of multipliers defined by $\lambda_{m,n}=\lambda(|(m,n)|)$.
For $M_1+1\leq l\leq M_2$ and $1\leq j\leq a_l$, we write $\lambda_j^l=\lambda_{m_j^l,n_j^l}=\lambda(|(m_j^l,n_j^l)|)$. Let  $\Lambda_s=\{\widetilde{\lambda}_k \}_{k=1}^s$ be the numerical sequence
\begin{equation*}
\Lambda_s =\{\underbrace{\lambda_1^{M_1+1},\ldots,{\lambda_1^{M_1+1}}}_{{d_{M_1+1,1}}},\ldots,\underbrace{\lambda_{a_{M_1+1}}^{M_1+1},\ldots,{\lambda_{a_{M_1+1}}^{M_1+1}}}_{{d_{M_1+1,a_{M_1+1}}}},\ldots,\underbrace{\lambda_1^{M_2},\ldots,{\lambda_1^{M_2}}}_{{d_{M_2,1}}},\ldots,\underbrace{\lambda_{a_{M_2}}^{M_2},\ldots,{\lambda_{a_{M_2}}^{M_2}}}_{{d_{M_2,a_{M_2}}}} \}.
\end{equation*}
Consider the multiplier operator $\Lambda_s$ on $\mathcal{T}_{M_1,M_2}$ defined by
\begin{center}
 $\Lambda_s\left(\displaystyle\sum_{k=1}^s\alpha_k\xi_k\right)=\displaystyle\sum_{k=1}^s\widetilde{\lambda}_k\alpha_k\xi_k$.
\end{center}
We also denote by $\Lambda_s$ the multiplier operator defined on $\R^s$ by 
\begin{equation*}
 \Lambda_s(\alpha_1,\ldots,\alpha_s)=\left(\widetilde{\lambda}_1\alpha_1,\ldots,\widetilde{\lambda}_s\alpha_s\right).
 \end{equation*}
For $\xi\in \mathcal{T}_{M_1,M_2} $ and $1\leq p\leq \infty$, we define
\begin{center}
 $\norm{\xi}{\Lambda_s,p}=\norm{\Lambda_s\xi}{p}$,
 \end{center}
 and for $\alpha\in\R^s$ we define
 \begin{center}
  $\norm{\alpha}{(\Lambda_s,p)}=\norm{J(\alpha)}{\Lambda_s,p}$.
 \end{center}
\noindent The application $\mathcal{T}_{M_1,M_2}  \ni\xi\longmapsto\norm{\xi}{\Lambda_s,p}$ is a norm on $\mathcal{T}_{M_1,M_2} $ and  the application $\R^s\ni\alpha\longmapsto\norm{\alpha}{(\Lambda_s,p)}$ is a norm on $\R^s$. We denote
\begin{center}
 $B_{\Lambda_s,p}^s=B_{\Lambda,p}^s=\{\xi \in \mathcal{T}_{M_1,M_2}  : \norm{\xi}{\Lambda_s,p}\leq1 \},$
\end{center}

\begin{center}
 $B_{(\Lambda_s,p)}^n=B_{(\Lambda,p)}^s=\{\alpha \in \C^s : \norm{\alpha}{(\Lambda_s,p)}\leq1 \}.$
\end{center}
If $\Lambda_s$ is the identity operator I, we will write $\norm{\cdot}{I,p}=\norm{\cdot}{p}$, $\norm{\cdot}{(I,p)}=\norm{\cdot}{(p)}$, $B_{I,p}^s=B_{p}^s$  and $B_{(I,p)}^s=B_{(p)}^s$.
\end{myrem}


\begin{myth}\label{theo3.1}
 Let $\lambda:[0,\infty)\rightarrow \R$ be a positive and monotonic function, $s=\dim\mathcal{T}_{M_1,M_2}$ and consider the orthonormal system $\{\xi_k\}^s_{k=1}$ 
 of $\mathcal{T}_{M_1,M_2}$ and the multiplier operator $\Lambda_s$ on $\mathcal{T}_{M_1,M_2}$ as in Remark \ref{myrem3.8}. If $\lambda$ is non-increasing, then there is an absolute 
 constant $C>0$ such that:
 \begin{itemize}
 \item[(a)] If $2\leq p<\infty$, we have
 \begin{equation}
 \nonumber s^{-1/2}\left(\displaystyle\sum_{l=M_1+1}^{M_2}(\lambda(l))^2 d_l\right)^{1/2}\leq M(\norm{\cdot}{(\Lambda_s,p)})\leq C\omega_d^{1/p-1/2}p^{1/2}s^{-1/2}\left(\displaystyle\sum_{l=M_1+1}^{M_2}(\lambda(l-1))^2 d_l\right)^{1/2}.
 \end{equation}
\item[(b)] If $p= \infty$, we have
\begin{equation}
\nonumber s^{-1/2}\left(\displaystyle\sum_{l=M_1+1}^{M_2}(\lambda(l))^2 d_l \right)^{1/2}\leq M(\norm{\cdot}{(\Lambda_s,\infty)})\leq C\omega_d^{-1/2}(\ln s)^{1/2}s^{-1/2}\left(\displaystyle\sum_{l=M_1+1}^{M_2}(\lambda(l-1))^2 d_l \right )^{1/2}.
 \end{equation}
 \item[(c)] If $1\leq p\leq 2$, we have
 \begin{equation*}
  \frac{\omega_d^{1/2}}{2}s^{-1/2}\left(\displaystyle\sum_{l=M_1+1}^{M_2}(\lambda(l))^2 d_l \right)^{1/2}\leq M(\norm{\cdot}{(\Lambda_s,p)})\leq s^{-1/2}\left(\displaystyle\sum_{l=M_1+1}^{M_2}(\lambda(l-1))^2 d_l \right )^{1/2}.
 \end{equation*}
  \item[(d)] If $ p= 2$, we have
 \begin{equation*}
  s^{-1/2}\left(\displaystyle\sum_{l=M_1+1}^{M_2}(\lambda(l))^2 d_l \right)^{1/2}\leq M(\norm{\cdot}{(\Lambda_s,p)})\leq s^{-1/2}\left(\displaystyle\sum_{l=M_1+1}^{M_2}(\lambda (l-1))^2d_l \right )^{1/2}.
 \end{equation*} 
 \end{itemize}
 If $\lambda$ is non-decreasing, then we obtain the estimates in $(a)$, $(b)$, $(c)$ and $(d)$ permuting $\lambda(l)$ for $\lambda(l-1)$.
 
\end{myth}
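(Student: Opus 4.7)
The linchpin is the addition formula: summed over a real orthonormal basis of each $\mathcal{H}_{m,n}$, one has $\sum_i Y_i(z)^2 = d_{m,n}/\omega_d$. Applied block-by-block to the basis $\{\xi_k\}_{k=1}^{s}$ of $\mathcal{T}_{M_1,M_2}$ (grouping the $Y_i^{l,j}$ by their common $(l,j)$ and remembering that $\widetilde{\lambda}_k=\lambda(l)$ for every $\xi_k$ in the $l$-th block), this yields the identity
\[
\sum_{k=1}^{s}\widetilde{\lambda}_k^{\,2}\,\xi_k(z)^2 \;=\; \frac{1}{\omega_d}\sum_{l=M_1+1}^{M_2}\lambda(l)^2 d_l,\qquad z\in\Omega_d,
\]
which is \emph{independent of $z$}. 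Consequently the random function $G(z)=\sum_k\widetilde{\lambda}_k\alpha_k\xi_k(z)$ behaves like a Gaussian process with pointwise-constant variance once $\alpha\sim N(0,I_s)$. Passing from the sphere to the Gaussian via the polar decomposition $\alpha=r\theta$ (with $Er^2=s$ and $\theta$ uniform on $S^{s-1}$) gives the identification $s\,M(\|\cdot\|_{(\Lambda_s,p)})^2 = E_\alpha\|G\|_p^{\,2}$, and every bound in the theorem reduces to an $L^p$ estimate for $\|G\|_p$.

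At $p=2$ (part (d)), orthonormality together with $\int_{S^{s-1}}\alpha_j\alpha_k\,d\mu=\delta_{jk}/s$ yields the exact identity $M(\|\cdot\|_{(\Lambda_s,2)})^2 = s^{-1}\sum_k\widetilde{\lambda}_k^{\,2} = s^{-1}\sum_l\lambda(l)^2 d_l$, from which both inequalities of (d) follow since $\lambda(l)\le\lambda(l-1)$. The lower bounds in (a), (b), (c) all come from the generalised Minkowski integral inequality (``$L^p L^2\le L^2 L^p$'' for $p\ge 2$, and its $p\le 2$ counterpart combined with the $p=2$ identity). With $\Phi(\theta,z)=(\Lambda_s J(\theta))(z)$ and $F(z)=(\int_{S^{s-1}}\Phi(\theta,z)^2\,d\mu(\theta))^{1/2}$, the displayed identity forces $F$ to be constant, so $\|F\|_p = \omega_d^{1/p-1/2}s^{-1/2}(\sum_l\lambda(l)^2 d_l)^{1/2}$, and the Minkowski bound $\|F\|_p\le M(\|\cdot\|_{(\Lambda_s,p)})$ gives the claimed lower bounds (the $\omega_d^{1/2}/2$ prefactor of (c) being the $p\le 2$ manifestation of $\omega_d^{1/p-1/2}$).

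For the upper bounds at $p<\infty$, the Gaussian representation combined with Fubini and the pointwise Gaussian moment bound $E|N(0,\sigma^2)|^p\le(C\sqrt{p}\,\sigma)^p$ gives
\[
E\|G\|_p^{\,p}\;=\;\int_{\Omega_d}E|G(z)|^p\,d\sigma_d(z)\;=\;\omega_d\,\sigma^p c_p^{\,p},\qquad \sigma^2 \;=\; \omega_d^{-1}\sum_l\lambda(l)^2 d_l,
\]
after which Jensen's inequality applied to the concave map $t\mapsto t^{2/p}$ (valid for $p\ge 2$) produces $E\|G\|_p^{\,2}\le(E\|G\|_p^{\,p})^{2/p}=\omega_d^{2/p}\sigma^2 c_p^{\,2}$. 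Dividing by $s$ and using $c_p\le C\sqrt{p}$ produces the upper bound of (a); replacing $\lambda(l)$ by the majorant $\lambda(l-1)$ keeps the bound valid under the non-increasing hypothesis. For $p\le 2$ (upper bound of (c)) the $p=2$ identity combined with Hölder's $\|f\|_p\le\omega_d^{1/p-1/2}\|f\|_2$ suffices. For $p=\infty$ (part (b)) I would combine Gaussian concentration with a net argument: since $G\in\mathcal{T}_{M_1,M_2}$ is a polynomial of degree $\le M_2$, a Bernstein--Markov inequality on $\Omega_d$ lets one replace $\sup_{z\in\Omega_d}|G(z)|$ by the maximum over a discrete net of cardinality $\le s^{O(1)}$, and the standard bound $E\max_{1\le i\le N}|g_i|\le C\sigma\sqrt{\ln N}$ for $N$ centred Gaussians with variance $\le\sigma^2$ yields $E\|G\|_\infty\le C\sigma\sqrt{\ln s}$, from which (b) follows.

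The main obstacle will be this last step. A direct Dudley or Sudakov bound for the \emph{continuous} Gaussian process $G$ on $\Omega_d$ is not sharp enough to give the factor $\sqrt{\ln s}$; the improvement to a logarithmic factor is essentially a discretisation phenomenon, using that $G$ lies in the polynomial space $\mathcal{T}_{M_1,M_2}$ of dimension $s$ and degree $M_2\asymp s^{1/(2d-1)}$, so that a net of cardinality polynomial in $s$ captures the sup up to multiplicative constants. Coordinating these Bernstein--Markov estimates on $\Omega_d$ with a Gaussian union bound over the net is where most of the technical work lives.
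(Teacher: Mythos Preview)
Your argument is essentially correct and closely parallels the paper's: both hinge on the addition formula making $\sum_k\widetilde\lambda_k^{\,2}\xi_k(z)^2$ constant in $z$, after which the $L^p$ estimates reduce to Khintchine/Gaussian moment bounds. Your use of Gaussians directly is equivalent to the paper's detour through Rademacher sums (via Kwapi\'en's lemma) followed by Khintchine; the paper's version is slightly more explicit about the constant $\gamma(p)\asymp\sqrt p$. Your treatment of (d) is in fact sharper than the paper's, since you observe that $\widetilde\lambda_k=\lambda(l)$ exactly on each block, so (d) is an \emph{identity} rather than a pair of inequalities. For the lower bounds in (a), (b) the paper simply invokes monotonicity of $p\mapsto M(\|\cdot\|_{(\Lambda_s,p)})$ from the $p=2$ case, rather than your Minkowski argument; either works.

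The one place where your route is genuinely harder than necessary is the upper bound in (b). You propose a Bernstein--Markov discretisation of $\Omega_d$ to a net of cardinality $s^{O(1)}$ followed by a Gaussian union bound. This can be made to work, but the paper bypasses it entirely with a two-line trick: from the convolution identity $t_s=D_{M_1,M_2}*t_s$ and $\|D_{M_1,M_2}\|_\infty\le s/\omega_d$ one interpolates to get the Nikolskii inequality
\[
\|t_s\|_\infty\le (s/\omega_d)^{1/p}\|t_s\|_p,\qquad t_s\in\mathcal T_{M_1,M_2},\ 1\le p\le\infty,
\]
and then applies the already-proved bound (a) with the specific choice $p=\ln s$, so that $(s/\omega_d)^{1/p}\asymp 1$ while $p^{1/2}=(\ln s)^{1/2}$. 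This ``$p=\ln s$'' device is a standard and very clean substitute for net arguments whenever one has a polynomial-type Nikolskii inequality at hand; it is worth adding to your toolkit, as it turns what you flagged as the main obstacle into a triviality.
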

\begin{proof} Suppose $\lambda$ a non-increasing function. For a continuous function $f$ on $S^{n-1}$ consider the function $\widetilde{f}$ defined on $\R^n \backslash \{0\}$ by $\widetilde{f}(x)=|||x|||^2f\left(x/|||x|||\right)$. It is known that 
 \begin{equation}\label{(3.11)}
 \displaystyle\int_{S^{n-1}} f(x)  d\mu(x)= \frac{2\pi}{n} \displaystyle\int_{\R^n} \widetilde{f}(x)  d\gamma(x),
 \end{equation}
 where  $d\gamma(x)=e^{-\pi|||x|||^2}dx$ denotes the Gaussian measure on $\R^n$. Let $\{r_k\}_{k=1}^\infty$ be the sequence of Rademacher's functions given by $
 r_k(\theta)=\textrm{sign}\hspace{0.1cm} \sin(2^k\pi\theta)$, $\theta\in [0,1]$, $k\in\N$
 and let
 \begin{equation*}
 \delta_i^m(\theta)=m^{-1/2}(r_{(i-1)m+1}(\theta)+\cdots+r_{im}(\theta)),\hspace{0.5cm} m\in\N,\hspace{0.1cm}i=1,2,\ldots,n.
\end{equation*}
It follows by Lemma 2.1 in Kwapień \cite{Kwapien}, p. 585, that if $h:\R^n\longrightarrow \R $ is a continuous function satisfying\\ $h(x_1,\ldots,x_n)e^{-\sum_{k=1}^{n}|x_k|}\rightarrow0$,
uniformly when $\sum_{k=1}^n|x_k|\rightarrow \infty$, then
 \begin{equation}\label{(3.12)}
 \displaystyle\int_{\R^n} h(x)  d\gamma(x)=\displaystyle\lim_{m \to{}\infty}{\displaystyle\int_{0}^{1}h((2\pi)^{-1/2}(\delta_1^m(\theta),\ldots,\delta_n^m(\theta))) }d\theta.
\end{equation}
 Now, we consider $f(x)=\norm{x}{(\Lambda_s,p)}^2$, $x\in S^{s-1}$ and $h(x)=\widetilde{f}(x)=f(x)$, $x\in \R^{s}$ applying (\ref{(3.11)}) and (\ref{(3.12)}), we obtain
 \begin{eqnarray}\label{eq:(65)}
\nonumber\displaystyle\int_{S^{s-1}}\norm{x}{(\Lambda_s,p)}^2d\mu(x) & =&\frac{2\pi}{s}\displaystyle\lim_{m \to{}\infty}\displaystyle\int_{0}^{1}\norm{(2\pi)^{-1/2}(\delta_1^m(\theta),\ldots,\delta_{s}^m(\theta))}{(\Lambda_s,p)}^2d\theta \\
&=&s^{-1}\displaystyle\lim_{m \to{}\infty}\displaystyle\int_{0}^{1}\left(\displaystyle\int_{\Omega_d}\left|\displaystyle\sum_{i=1}^{s}\widetilde{\lambda}_i\delta_i^m(\theta)\xi_i(z)\right|^pd\sigma_d(z)\right)^{2/p}
d\theta.
\end{eqnarray} 
Denoting $\widetilde{\xi}_{(i-1)m+k}(z)=m^{-1/2}\xi_i(z)$, $z \in \Omega_d $ and $\widetilde{\widetilde{\lambda}}_{(i-1)m+k}=\widetilde{\lambda}_i$, for $i=1,\ldots,s;$ $k=1,\ldots,m$ and $m=1,2,\ldots$\\ we obtain 
\begin{equation}\label{eq:(67)}
\displaystyle\sum_{i=1}^{s}\widetilde{\lambda}_i\delta_i^m(\theta)\xi_i(z)=\displaystyle\sum_{j=1}^{ms}r_j(\theta)\widetilde{\widetilde{\lambda}}_j\widetilde{\xi}_j(z).
\end{equation}
Hence, by (\ref{eq:(65)}) and (\ref{eq:(67)}),
\begin{eqnarray}\label{eq:(3.28)}
\displaystyle\int_{S^{s-1}}\norm{x}{(\Lambda_s,p)}^2d\mu(x) &=&s^{-1}\displaystyle\lim_{m \to{}\infty} \displaystyle\int_{0}^{1}\left( \displaystyle\int_{\Omega_d}\left|\displaystyle\sum_{j=1}^{ms}r_j(\theta)\widetilde{\widetilde{\lambda}}_j\widetilde{\xi}_j(z)\right|^p d\sigma_d(z)\right)^{2/p}d\theta .
\end{eqnarray}
Using the addition formula we get
\begin{eqnarray*}
  \displaystyle\sum_{j=1}^{ms}|\widetilde{\widetilde{\lambda}}_j\widetilde{\xi}_j(z)|^2 &=&\displaystyle\sum_{i=1}^{s}\displaystyle\sum_{k=1}^{m}\left|\widetilde{\widetilde{\lambda}}_{(i-1)m+k}\widetilde{\xi}_{(i-1)m+k}(z)\right|^2=\displaystyle\sum_{i=1}^{s}\displaystyle\sum_{k=1}^{m}\left|\widetilde{\lambda}_i\right|^2\left|m^{-1/2}\xi_i(z)\right|^2\\
  &=&\displaystyle\sum_{l=M_1+1}^{M_2}\displaystyle\sum_{j=1}^{a_{l}}(\lambda_j^l)^2\displaystyle\sum_{i=1}^{d_{l,j}}\left|Y_i^{l,j}(z)\right|^2\\
  &\leq&\displaystyle\sum_{l=M_1+1}^{M_2}(\lambda(l-1))^2\displaystyle\sum_{j=1}^{a_{l}}\frac{d_{l,j}}{\omega_d}\\
  &=&\displaystyle\sum_{l=M_1+1}^{M_2}(\lambda(l-1))^2 \frac{d_l}{\omega_d}  
  \end{eqnarray*}
  and analogously,
  \begin{equation}
\nonumber \displaystyle\sum_{j=1}^{2ms}\left|\widetilde{\widetilde{\lambda}}_j\widetilde{\xi}_j(z)\right|^2 \geq \displaystyle\sum_{l=M_1+1}^{M_2}(\lambda(l))^2 \frac{d_l}{\omega_d}.
\end{equation}
Therefore, from Jensen's inequality, Khintchine's inequality (see \cite{Pietsch}, p. 41) and (\ref{eq:(3.28)}), we obtain for $2\leq p\leq\infty$,
\begin{eqnarray}\label{eq:(71)}
\nonumber M(\norm{\cdot}{(\Lambda_s,p)}))&\leq&s^{-1/2}\displaystyle\lim_{m \to{}\infty}\left( \left( \displaystyle\int_{\Omega_d}\displaystyle\int_{0}^{1}\left|\displaystyle\sum_{j=1}^{ms}r_j(\theta)\widetilde{\widetilde{\lambda}}_j\widetilde{\xi}_j(z)\right|^p d\theta d\sigma_d(z)\right)^{2/p} \right)^{1/2}\\ 
\nonumber&\leq& \gamma(p)s^{-1/2}\displaystyle\lim_{m \to{}\infty}\left(  \displaystyle\int_{\Omega_d}\left(\displaystyle\sum_{j=1}^{ms}\left|\widetilde{\widetilde{\lambda}}_j\widetilde{\xi}_j(z)\right|^2 \right)^{p/2} d\sigma_d(z) \right)^{1/p}.\\
\nonumber&\leq&\gamma(p)s^{-1/2}\displaystyle\lim_{m \to{}\infty}\left(  \displaystyle\int_{\Omega_d}\left(\displaystyle\sum_{l=M_1+1}^{M_2}(\lambda(l-1))^2\frac{d_l}{\omega_d}  \right)^{p/2} d\sigma_d(z) \right)^{1/p}\\
\nonumber&\leq&C_1p^{1/2}s^{-1/2}\omega_d^{1/p-1/2} \left(\displaystyle\sum_{l=M_1+1}^{M_2}(\lambda(l-1))^2d_l \right)^{1/2},
\end{eqnarray}
where $C_1$ is obtained from the fact that $\gamma(p)\asymp p^{1/2}$, and hence we get the upper estimate in $(a)$. On the other hand, for $p=1$, it follows from Khintchine's inequality, Jensen's inequality and (\ref{eq:(3.28)})
\begin{eqnarray*}
M(\norm{\cdot}{(\Lambda_s,1)})&=& s^{-1/2}\displaystyle\lim_{m \to{}\infty}\left( \displaystyle\int_{0}^{1}\left( \displaystyle\int_{\Omega_d}\left|\displaystyle\sum_{j=1}^{ms}r_j(\theta)\widetilde{\widetilde{\lambda}}_j\widetilde{\xi}_j(z)\right| d\sigma_d(z)\right)^2d\theta \right)^{1/2}\\
&\geq& s^{-1/2}\displaystyle\lim_{m \to{}\infty} \displaystyle\int_{\Omega_d}\displaystyle\int_{0}^{1} \left|\displaystyle\sum_{j=1}^{ms}r_j(\theta)\widetilde{\widetilde{\lambda}}_j\widetilde{\xi}_j(z)\right| d\theta d\sigma_d(z)\\ 
&\geq& \beta(1)s^{-1/2}\displaystyle\lim_{m \to{}\infty} \displaystyle\int_{\Omega_d} \left(\displaystyle\sum_{j=1}^{ms}\left|\widetilde{\widetilde{\lambda}}_j\widetilde{\xi}_j(z)\right|^2 \right)^{1/2}  d\sigma_d(z)\\ 
&\geq& \frac{1}{2}s^{-1/2}\omega_d^{1/2}\left(\displaystyle\sum_{l=M_1+1}^{M_2}(\lambda(l))^2d_l \right)^{1/2}.
\end{eqnarray*}
Since the Levy mean is an increasing function of p, it follows that 
 \begin{equation*}
 M(\norm{\cdot}{(\Lambda_s,p)})\geq M(\norm{\cdot}{(\Lambda_s,1)})\geq\frac{1}{2}s^{-1/2}\omega_d^{1/2} \left(\displaystyle\sum_{l=M_1}^{M_2}(\lambda(l))^2d_l \right)^{1/2},\hspace{0.4cm}1\leq p\leq2,
 \end{equation*}
 thus we obtain the lower estimate in $(c)$.\\
 \indent Now, we will obtain the inequalities in $(d)$. For $x\in\R^s$, we have
 \begin{eqnarray*}
 \norm{x}{(\Lambda_s,2)}^2&=&\displaystyle\sum_{l=M_1+1}^{M_2}\displaystyle\sum_{j=1}^{a_l}|\lambda_j^l|^2\displaystyle\sum_{i=1}^{d_{l,j}}(x_i^{l,j})^2\norm{Y_i^{l,j}}{2}^2\\
 &\leq&\displaystyle\sum_{l=M_1+1}^{M_2}\left(\displaystyle\sup_{1\leq j\leq a_l} \lambda_j^l\right)^2\displaystyle\sum_{j=1}^{a_l}\displaystyle\sum_{i=1}^{d_{l,j}}(x_i^{l,j})^2\\
 &\leq&\displaystyle\sum_{l=M_1+1}^{M_2}(\lambda(l-1))^2\displaystyle\sum_{j=1}^{a_l}\displaystyle\sum_{i=1}^{d_{l,j}}(x_i^{l,j})^2,
 \end{eqnarray*}
 and
 \begin{equation*}\label{eq:(62)}
  \displaystyle\int_{S^{s-1}}x_i^2d\mu(x)=\frac{1}{s}\displaystyle\sum_{i=1}^{s}\displaystyle\int_{S^{s-1}}x_i^2d\mu(z)=\frac{1}{s}\displaystyle\int_{S^{s-1}}|||x|||^2d\mu(x)=\frac{1}{s},\hspace{0.4cm}  i=1,\ldots,s.
 \end{equation*}
 Thus 
 \begin{eqnarray*}
\displaystyle\int_{S^{s-1}}\norm{x}{(\Lambda_s,2)}^2d\mu(z)&\leq&\displaystyle\sum_{l=M_1+1}^{M_2}(\lambda(l-1))^2\displaystyle\sum_{j=1}^{a_l}\displaystyle\sum_{i=1}^{d_{l,j}}\displaystyle\int_{S^{s-1}}(x_i^{l,j})^2d\mu(x)\\
&=&\frac{1}{s}\displaystyle\sum_{l=M_1+1}^{M_2}(\lambda(l-1))^2d_l, 
\end{eqnarray*}
and consequently
\begin{equation*}
  M(\norm{\cdot}{(\Lambda_s,2)})\leq s^{-1/2}\left(\displaystyle\sum_{l=M_1+1}^{M_2}(\lambda(l-1))^2d_l \right)^{1/2}.
\end{equation*}
Analogously we get
\begin{equation*}
  M(\norm{\cdot}{(\Lambda_s,2)})\geq s^{-1/2}\left(\displaystyle\sum_{l=M_1+1}^{M_2}(\lambda(l))^2d_l \right)^{1/2}.
\end{equation*}
Therefore we obtain $(d)$. Since the Levy mean $M(\norm{\cdot}{(\Lambda_s,p)})$ is an increasing function of $p$ for $1\leq p\leq\infty$, the lower estimates in $(a)$,$(b)$ and the 
upper estimate in $(c)$ follow from $(d)$.\\
\indent Finally, we will find the upper estimate in $(b)$. Any polynomial $t_s\in \mathcal{T}_{M_1,M_2}$, can be expressed as $t_s=D_{M_1,M_2}\ast t_s$, 
where $D_{M_1,M_2}=\sum_{(m,n)\in A_{M_2}\setminus A_{M_1}}Z_e^{(m,n)}$ and from Young's inequality, we get
\begin{equation*}
\norm{t_s}{\infty}=\norm{D_{M_1,M_2}\ast t_s}{\infty}\leq\norm{D_{M_1,M_2}}{\infty}\norm{t_s}{1},
\end{equation*}
and since $D_{M_1,M_2}=D_{M_1,M_2}\ast D_{M_1,M_2}$, then
\begin{equation*}
 \norm{D_{M_1,M_2}}{\infty}\leq\norm{D_{M_1,M_2}}{2}^2=\frac{s}{\omega_d}
\end{equation*}
and thus $\norm{t_s}{\infty}\leq s\norm{t_s}{1}/\omega_d$. Furthermore, if I denotes the identity operator, we have
\begin{equation*}
 \norm{I(t_s)}{\infty}\leq \frac{s}{\omega_d}\norm{t_s}{1},\hspace{0.5cm} \norm{I(t_s)}{\infty}\leq\norm{t_s}{\infty}.
\end{equation*}
Applying the Riesz-Thorin Interpolation Theorem to the above inequalities , we obtain
\begin{equation}\label{eq:(499)}
 \norm{t_s}{\infty}=\norm{I(t_s)}{\infty}\leq \left( \frac{s}{\omega_d}\right)^{1/p}\norm{t_s}{p},\hspace{2cm}1\leq p\leq\infty.
\end{equation}
Similarly, we can show that 
  \begin{equation}\label{eq:(17)}
   \norm{t_s}{p}\leq \left( \frac{s}{\omega_d}\right)^{1/2-1/p}\norm{t_s}{2}, \hspace{1cm}2\leq p\leq \infty. 
  \end{equation}
  To obtain the upper estimate in $(b)$, we apply (\ref{eq:(499)}) and the upper estimate in $(a)$ with $p=\ln s$, and we get 
\begin{eqnarray*}
 M(\norm{\cdot}{(\Lambda_s,\infty)})&\leq&\left(\frac{s}{\omega_d}\right)^{1/p}\left(\displaystyle\int_{S^{s-1}} \norm{x}{(\Lambda_s,p)}^2 d\mu(x)\right)^{1/2}\\
&\leq&\left(\frac{s}{\omega_d}\right)^{1/p}C_1p^{1/2}s^{-1/2}\omega_d^{1/p-1/2}\left(\displaystyle\sum_{l=M_1}^{M_2}(\lambda(l-1))^2d_l \right)^{1/2}\\
&=&eC_1(\ln s)^{1/2}s^{-1/2}\omega_d^{-1/2}\left(\displaystyle\sum_{l=M_1}^{M_2}(\lambda(l-1))^2d_l \right)^{1/2}.
\end{eqnarray*}
If $\lambda$ is a non-decreasing function, the proof is analogous.
  \end{proof}
\section{ Estimates for $n$-widths of general multiplier operators}
Consider a $n$-dimensional Banach space $E=(\R^n,\norm{\cdot}{})$. The dual norm of $\norm{\cdot}{}$ is defined by $\norm{x}{}^0=\sup\{|\langle x,y\rangle|:y\in B_E\}$, where $\langle x,y\rangle$ denotes the usual inner product of the elements $x,y\in \R^n$. The dual space 
$(\R^n,\norm{\cdot}{}^0)$ of $E$ will be denoted by $E^0$.
\begin{myth}(\cite{Pajor})\label{theo:(4.1)}
 There exists an absolute constant $C>0$ such that, for every $0<\rho<1$ and $n\in\N$, there exists a subspace $F_k\subseteq\R^n$, with 
 $\dim F_k=k>\rho n$ and 
 \begin{equation*}
  |||\alpha|||\leq CM\left(\norm{\cdot}{}^0\right)(1-\rho)^{-1/2}\norm{\alpha}{},\hspace{0.5cm}\alpha\in F_k.  
 \end{equation*}
\end{myth}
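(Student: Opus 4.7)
The result is Pajor's random-subspace inequality, a classical theorem of asymptotic geometric analysis, and the plan is to select $F_k$ probabilistically using the Haar measure on the Grassmannian $G_{n,k}$ with $k=\lceil\rho n\rceil+1$, then to control the size of $\norm{\cdot}{}^{0}$ uniformly on the Euclidean sphere of $F_k$ by concentration of measure, and finally to convert this bound back to the required estimate on $|||\cdot|||$ by polarity. The first step I would take is a duality reduction: taking polars inside $F_k$ endowed with the inherited Euclidean inner product, the sought bound $|||\alpha|||\leq A\norm{\alpha}{}$ on $F_k$ (where $A=CM(\norm{\cdot}{}^{0})(1-\rho)^{-1/2}$) is equivalent to $\norm{y}{F_k}^{0}\leq A\,|||y|||$ for $y\in F_k$, with $\norm{\cdot}{F_k}^{0}$ denoting the dual norm computed inside $F_k$. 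Since $\norm{y}{F_k}^{0}\leq\norm{y}{}^{0}$ for $y\in F_k$, it is enough to exhibit $F_k$ on which
\[
\sup_{y\in F_k,\,|||y|||=1}\norm{y}{}^{0}\leq A.
\]

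With this reduction in place, the probabilistic construction comes next. I would choose $F_k$ at random according to the Haar measure on $G_{n,k}$ and apply Levy's concentration inequality on $S^{n-1}$ to the $1$-homogeneous function $x\mapsto\norm{x}{}^{0}$, which concentrates around its Levy mean $M(\norm{\cdot}{}^{0})$ with Gaussian-type tails. Combining this pointwise concentration with an $\varepsilon$-net of $S^{k-1}\cap F_k$ of cardinality at most $(3/\varepsilon)^{k}$, together with a union bound and the standard net-to-sphere passage, would yield an inequality roughly of the form
\[
\mathbb{P}\!\left(\sup_{y\in S^{k-1}\cap F_k}\norm{y}{}^{0}>tM(\norm{\cdot}{}^{0})\right)\leq (3/\varepsilon)^{k}\exp\bigl(-c(t-1)^{2}n\bigr).
\]
For $\varepsilon$ fixed small enough, the right-hand side is strictly less than $1$ as soon as $k\log(3/\varepsilon)<c(t-1)^{2}n$; setting $k\asymp\rho n$ forces $t\asymp(1-\rho)^{-1/2}$, which is exactly the factor appearing in the statement. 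A deterministic $F_k$ with the required uniform bound therefore exists, and the duality reduction above converts this into the desired inequality between $|||\cdot|||$ and $\norm{\cdot}{}$.

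The main obstacle is the concentration estimate itself. Applying Levy's inequality directly against the mean $M(\norm{\cdot}{}^{0})$, rather than against a quantity involving the Lipschitz constant of $\norm{\cdot}{}^{0}$ on $S^{n-1}$ (which can be much larger than $M$), requires either Levy's inequality in its sharp isoperimetric form or passage through a Gaussian model where suprema of Gaussian processes are bounded by $M(\norm{\cdot}{}^{0})$ via minimax-type inequalities. Securing a universal constant $C$ independent of $\norm{\cdot}{}$, together with the precise $(1-\rho)^{-1/2}$ scaling, is the delicate heart of Pajor's theorem, and this balancing of the net cardinality $k\log(3/\varepsilon)$ against the concentration exponent is the step I expect to be the most technical.
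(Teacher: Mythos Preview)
The paper does not prove this theorem at all: it is stated with the citation \cite{Pajor} (Pajor--Tomczak-Jaegermann) and used as a black box in the proofs of Theorems~\ref{1} and~\ref{2}. There is therefore no proof in the paper to compare your proposal against.

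That said, your outline is a fair sketch of the probabilistic intuition behind the result, and your duality reduction is correct. You are also right to flag the central difficulty: naive Lévy concentration on $S^{n-1}$ controls deviations of $\norm{\cdot}{}^{0}$ in terms of its Lipschitz constant (that is, $\max_{|||x|||=1}\norm{x}{}^{0}$), which may be far larger than $M(\norm{\cdot}{}^{0})$, so the net-plus-union-bound argument as written does not directly yield the claimed inequality with the factor $M(\norm{\cdot}{}^{0})$. The original proof in \cite{Pajor} circumvents this via an iteration scheme (successively removing bad directions and reapplying the estimate on the remaining subspace), rather than a single-shot concentration bound; an alternative route goes through Gaussian processes and Sudakov minoration. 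Your proposal identifies the obstacle correctly but does not resolve it, so as a self-contained proof it is incomplete. For the purposes of this paper, however, simply citing \cite{Pajor} is what is done and is entirely appropriate.
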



\begin{myth}\label{1}
 Let $1\leq q\leq p\leq2$, $0<\rho<1$, $s=\dim\mathcal{T}_{N}$, $\mathcal{T}_{N}=\bigoplus_{l=0 }^N\mathcal{H}_l$, $d_l=\dim \mathcal{H}_l$ 
 and let $\lambda:[0,\infty)\rightarrow \mathbb{R}$ be a positive and non-increasing function with $\lambda(t)\neq0$ for $t\geq0$
 and $\Lambda=\{\lambda_{m,n}\}_{m,n\in \mathbb{N}}$, $\lambda_{m,n}=\lambda(|(m,n)|)$. 
 Then there is an absolute constant $C>0$ such that 
 \begin{equation*}  
  \min\{d_{[\rho s-1]}(\Lambda U_p,L^q),d^{[\rho s-1]}(\Lambda U_p,L^q)\} \geq  C(1-\rho)^{1/2}s^{1/2}\left(\displaystyle\sum_{l=1}^{N}(\lambda(l))^{-2}d_l \right)^{-1/2}\vartheta_{q,s} 
   \end{equation*}
 where 
 \begin{center}
 $\vartheta_{q,s} = \left \{ \begin{matrix} (1-1/q)^{1/2}, & \mbox{ }q>1,
\\ (\ln s)^{-1/2}, & \mbox{ }q=1,\end{matrix}\right.$
\end{center}
and $[\rho s-1]$ denotes the integer part of the number $\rho s-1$.
 \end{myth}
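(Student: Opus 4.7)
My plan is to bound the Bernstein width $b_{[\rho s-1]}(\Lambda U_p,L^q)$ from below and then invoke the inequality $b_n(A,X)\leq\min\{d_n(A,X),d^n(A,X)\}$ recorded in the introduction. Since $\lambda_{m,n}\neq 0$, every $\xi\in\mathcal{T}_N$ is of the form $\Lambda(\Lambda^{-1}\xi)$ with $\Lambda^{-1}\xi\in\mathcal{T}_N\subseteq L^p$, so for any subspace $F\subseteq\mathcal{T}_N$ the containment $\mu U_q\cap F\subseteq\Lambda U_p$ is equivalent to the reverse estimate $\norm{\Lambda^{-1}\xi}{p}\leq\mu^{-1}\norm{\xi}{q}$ for every $\xi\in F$. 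I therefore look for $F$ of dimension at least $[\rho s]=[\rho s-1]+1$ on which this inequality holds with $\mu$ as large as the theorem demands.

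Using the coordinate isomorphism $J\colon\R^s\to\mathcal{T}_N$ from Remark 4.2, parameterize $\xi=J(\Lambda_s\beta)$ with $\beta\in\R^s$, so that $\norm{\xi}{q}=\norm{\beta}{(\Lambda_s,q)}$ while $\norm{\Lambda^{-1}\xi}{p}=\norm{J(\beta)}{p}$. Apply Pajor's theorem (Theorem 5.1) to the norm $\norm{\cdot}{(\Lambda_s,q)}$ on $\R^s$, producing $G\subseteq\R^s$ with $\dim G>\rho s$ such that
\begin{equation*}
|||\beta|||\leq CM(\norm{\cdot}{(\Lambda_s,q)}^0)(1-\rho)^{-1/2}\norm{\beta}{(\Lambda_s,q)},\qquad\beta\in G.
\end{equation*}
Since $1\leq p\leq 2$ and $\{\xi_k\}$ is $L^2$-orthonormal, H\"older on the finite-measure space $(\Omega_d,\sigma_d)$ yields $\norm{J(\beta)}{p}\leq\omega_d^{1/p-1/2}|||\beta|||$. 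Chaining these estimates on $G$ gives the required reverse inequality with $\mu^{-1}$ proportional, up to a factor depending only on $d$ and $p$, to $M(\norm{\cdot}{(\Lambda_s,q)}^0)(1-\rho)^{-1/2}$. Taking $F=J(\Lambda_s(G))$ supplies the Bernstein geometry with the correct dimension count $\dim F=\dim G>\rho s\geq[\rho s-1]+1$.

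It remains to control $M(\norm{\cdot}{(\Lambda_s,q)}^0)$. The change of variables $\gamma=\Lambda_s\beta$ in the definition of the dual gives $\norm{\alpha}{(\Lambda_s,q)}^0=N_q^0(\Lambda_s^{-1}\alpha)$, where $N_q(\gamma)=\norm{J(\gamma)}{q}$. Since the Euclidean inner product on $\R^s$ corresponds via $J$ to the $L^2$ inner product on $\mathcal{T}_N$, ordinary H\"older duality delivers $N_q^0(\alpha)\leq\norm{J(\alpha)}{q'}$, hence $\norm{\cdot}{(\Lambda_s,q)}^0\leq\norm{\cdot}{(\Lambda_s^{-1},q')}$ pointwise. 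Because $\lambda^{-1}$ is non-decreasing and $q'\in[2,\infty]$, applying the upper bounds in Theorem 4.3(a),(b) in the non-decreasing case, with $\lambda$ replaced by $\lambda^{-1}$, gives
\begin{equation*}
M(\norm{\cdot}{(\Lambda_s^{-1},q')})\ll s^{-1/2}\left(\sum_{l=1}^{N}\lambda(l)^{-2}d_l\right)^{1/2}\eta_{q,s},
\end{equation*}
where $\eta_{q,s}=(q')^{1/2}$ for $q>1$ and $\eta_{q,s}=(\ln s)^{1/2}$ for $q=1$. Since $(q')^{1/2}\asymp(1-1/q)^{-1/2}$ when $q>1$, inverting reproduces the factor $\vartheta_{q,s}$ in the theorem and the claimed lower bound follows.

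The delicate point of this plan is the duality step $N_q^0\leq N_{q'}$: one must show that the dual of the restricted $L^q$-norm on the subspace $\mathcal{T}_N$ is dominated by the corresponding $L^{q'}$-norm without invoking any $L^q\to L^q$ boundedness of the orthogonal projection onto $\mathcal{T}_N$. The observation that the sup defining $N_q^0$ only involves $L^2$-pairings against $\eta\in\mathcal{T}_N\subseteq L^q$, which are therefore already controlled by H\"older against $J(\alpha)\in L^{q'}$, resolves this subtlety; everything else is a routine combination of Pajor and the Levy mean estimates of Theorem 4.3.
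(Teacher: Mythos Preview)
Your proposal is correct and follows essentially the same route as the paper: both arguments pass to the Bernstein width, apply Theorem~\ref{theo:(4.1)} to the norm $\norm{\cdot}{(\Lambda_s,q)}$ on $\R^s$, dominate the dual norm by $\norm{\cdot}{(\Lambda_s^{-1},q')}$ via the H\"older pairing identity $\langle\alpha,\gamma\rangle=\int_{\Omega_d}J(\alpha)J(\gamma)\,d\sigma_d$, and then invoke Theorem~\ref{theo3.1} with $q'\in[2,\infty]$ and the non-decreasing function $\lambda^{-1}$ to obtain the factor $\vartheta_{q,s}$. The only cosmetic difference is that the paper first reduces from $U_p$ to $U_2$ and then shows $\epsilon B_q^s\cap X_k\subseteq\Lambda_s B_2^s$, whereas you absorb that step into the single inequality $\norm{J(\beta)}{p}\leq\omega_d^{1/p-1/2}|||\beta|||$ and take $F=J(\Lambda_s G)$ directly; your formulation is in fact slightly cleaner, since the paper's choice $X_k=J(F_k)$ should really read $X_k=\Lambda_s J(F_k)$ for the inclusion $\epsilon B_q^s\cap X_k\subseteq\Lambda_s B_2^s$ to be literally correct.
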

 \begin{proof}
  Let $x,y\in\R^s=J^{-1}\mathcal{T}_{N}$. The Hölder's inequality implies that
  \begin{eqnarray*}
 \norm{x}{(\Lambda_s,q)}^0&=&\sup\{|\langle x,y\rangle|: y\in B_{(\Lambda_s,q)}^s\}\\
 &=&\sup\left\lbrace\left|\displaystyle\int_{\Omega_d} (Jx)(Jy) d\sigma_d\right|: Jy\in \Lambda_s^{-1}(B_q^s)\right\rbrace\\ 
 &=&\sup\left\lbrace\left|\displaystyle\int_{\Omega_d}(\Lambda_s^{-1}Jx)(J\overline{y}) d\sigma_d\right|: \overline{y}\in B_{(q)}^s\right\rbrace\\
 &\leq&\sup\{\norm{\Lambda_s^{-1}Jx}{q'}\norm{J\overline{y}}{q}: \overline{y}\in B_{(q)}^s\}\\
 &\leq&\norm{\Lambda_s^{-1}Jx}{q'} =\norm{x}{(\Lambda_s^{-1},q')}
 \end{eqnarray*}
 where $1/q+1/q'=1$ and $\Lambda^{-1}= \{\lambda_{m,n}^{-1}\}_{m,n\in \N}$. Taking $0<\rho<1$, the Theorem \ref{theo:(4.1)} guarantees the existence of 
 a subspace $F_k\subseteq \R^s$, $\dim F_k=k> \rho s$, such that 
 \begin{eqnarray*}
\norm{x}{(2)}=|||x|||&\leq& C'M\left(\norm{\cdot}{(\Lambda_s,q)}^0\right)(1-\rho)^{-1/2}\norm{x}{(\Lambda_s,q)},\hspace{1cm}\\
&\leq& C'M\left(\norm{\cdot}{(\Lambda_s^{-1},q')}\right)(1-\rho)^{-1/2}\norm{x}{(\Lambda_s,q)},\hspace{1cm}
\end{eqnarray*}
for all $x\in F_k$. Thus, for $\epsilon=(C')^{-1}(1-\rho)^{1/2}\left(M\left(\norm{\cdot}{(\Lambda_s^{-1},q')}\right)\right)^{-1}$, we have that $\epsilon B_{(\Lambda_s,q)}^s\cap F_k \subseteq B_{(2)}^s$.\\
From Theorem \ref{theo3.1}, since $2\leq q'\leq \infty$, it follows that 
\begin{equation}
\epsilon \geq C(1-\rho)^{1/2}\left \{ \begin{matrix} (q')^{-1/2}s^{1/2}\left(\displaystyle\sum_{l=1}^{N}(\lambda(l))^{-2}d_l \right)^{-1/2}, & \mbox{ }q'<\infty,
\\ (s/\ln s)^{1/2}\left(\displaystyle\sum_{l=1}^{N}(\lambda(l))^{-2}d_l \right)^{-1/2}, & \mbox{ }q'=\infty,\end{matrix}\right.\nonumber
\end{equation}
and since $1/q+1/q'=1$,
\begin{equation}
 \epsilon \geq C(1-\rho)^{1/2}\left \{ \begin{matrix} (1-1/q)^{1/2}s^{1/2}\left(\displaystyle\sum_{l=1}^{N}(\lambda(l))^{-2}d_l \right)^{-1/2}, & \mbox{ }q>1
\\ (s/\ln s)^{1/2}\left(\displaystyle\sum_{l=1}^{N}(\lambda(l))^{-2}d_k \right)^{-1/2}, & \mbox{ }q =1 .\end{matrix}\right.\nonumber
\end{equation}
We have that  $\Lambda_s=\Lambda\lvert_{\mathcal{T}_{N}}$ and since $B_2^s\subset U_2$, then $\Lambda_s(B_2^s)\subset\Lambda(U_2)$, therefore 
\begin{eqnarray*}
& &\min\{d_{[\rho s-1]}(\Lambda U_p,L^q),d^{[\rho s-1]}(\Lambda U_p,L^q)\}\\
 &\geq&\min\{d_{[\rho s-1]}(\Lambda U_2,L^q),d^{[\rho s-1]}(\Lambda U_2,L^q)\}\\
 &\geq& b_{[\rho s-1]}(\Lambda U_2,L^q)\\
 &\geq& b_{[\rho s-1]}(\Lambda_s B_2^s,L^q).
\end{eqnarray*}
Taking $X_k=J(F_k)$ we obtain $\epsilon B_q^s\cap X_k \subseteq \Lambda_s B_2^s$. Thus by the definition of Bernstein $n$-width
\begin{center}
 $b_{[\rho s-1]}(\Lambda_s B_2^s,L^q)\geq b_{k-1}(\Lambda_s B_2^s,L^q) \geq \epsilon$
\end{center}
and consequently
\begin{equation*}\label{eq:(3.2.3)}
 \min\{d_{[\rho s-1]}(\Lambda U_p,L^q),d^{[\rho s-1]}(\Lambda U_p,L^q)\}\geq \epsilon,
\end{equation*}
thus concluding the proof of the theorem.
 \end{proof}
 The next result follows applying Theorem \ref{1}, (\ref{(1.2)}) and basic properties of $n$-widths.
\begin{mycor}\label{cor1}
 In the conditions of Theorem \ref{1}, we have
 \begin{center}
  $d_{[\rho s-1]}(\Lambda U_p,L^p)\geq C'(1-\rho)^{1/2}s^{1/2}\left(\displaystyle\sum_{l=1}^{N}(\lambda(l))^{-2}d_l \right)^{-1/2}\kappa_s$,
 \end{center}
where 
\begin{center}
 $\kappa_s=\left \{ \begin{matrix} 1, & \mbox{ }1\leq p\leq 2, 1<q\leq 2,
 \\1,& \mbox{ }2\leq p< \infty, 2\leq q\leq \infty,
 \\1,& \mbox{ }1\leq p\leq2\leq q\leq \infty, 
\\ (\ln s)^{-1/2}, & \mbox{ }1\leq p\leq 2, q =1,
\\ (\ln s)^{-1/2}, & \mbox{ }p=\infty, 2\leq q\leq \infty. \end{matrix}\right.$
\end{center}
\end{mycor}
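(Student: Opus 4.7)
The plan is to leverage Theorem \ref{1}, which supplies lower bounds on $\min\{d_n(\Lambda U_p,L^q),d^n(\Lambda U_p,L^q)\}$ in the restricted regime $1\le q\le p\le 2$, and to propagate those bounds to each of the five $(p,q)$ regimes defining $\kappa_s$ using three standard ingredients. First, from $d_n\ge\min\{d_n,d^n\}$ (and the analogue for $d^n$), Theorem \ref{1} controls each width separately throughout its native range. Second, on the probability space $\Omega_d$ one has $U_2\subseteq U_p$ for $p\le 2$ and $\|f\|_{q_0}\le\|f\|_q$ for $q_0\le q$, and both $d_n$ and $d^n$ are monotone in the symmetric convex set, so $d_n(\Lambda U_p,L^q)\ge d_n(\Lambda U_2,L^{q_0})$ whenever $p\le 2$ and $q_0\le q$. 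Third, the duality identity \eqref{(1.2)} reads $d_n(\Lambda U_p,L^q)=d^n(\Lambda U_{q'},L^{p'})$, since the real multiplier operator $\Lambda$ is self-adjoint.

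With these tools I would first handle the regimes in which $p\le 2$. For $1\le p\le 2$ with either $1<q\le 2$ or $q=1$, the inclusion $\Lambda U_p\supseteq\Lambda U_2$ reduces the target to $d_n(\Lambda U_2,L^q)$, and Theorem \ref{1} applied with $(p_1,q_1)=(2,q)$ supplies the bound with the factor $\vartheta_{q,s}$: for $q>1$ fixed this factor is a positive constant (absorbed into $C'$, giving $\kappa_s=1$), while for $q=1$ it equals $(\ln s)^{-1/2}$, as required. The regime $1\le p\le 2\le q\le\infty$ additionally uses $\|f\|_2\le\|f\|_q$ to chain $d_n(\Lambda U_p,L^q)\ge d_n(\Lambda U_p,L^2)\ge d_n(\Lambda U_2,L^2)$, which Theorem \ref{1} controls with $\vartheta_{2,s}\asymp 1$, again yielding $\kappa_s=1$.

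The remaining regimes $2\le p<\infty$, $2\le q\le\infty$ and $p=\infty$, $2\le q\le\infty$ fall outside the native range of Theorem \ref{1}, and duality is the crucial step. By \eqref{(1.2)} the target equals $d^n(\Lambda U_{q'},L^{p'})$ with $q'\le 2$ and $p'\le 2$; then $\Lambda U_{q'}\supseteq\Lambda U_2$ together with set-monotonicity of $d^n$ reduces it to $d^n(\Lambda U_2,L^{p'})$, to which Theorem \ref{1} applies with $(p_1,q_1)=(2,p')$. For $p<\infty$ one has $p'>1$ and $\vartheta_{p',s}=(1-1/p')^{1/2}$ is a positive constant, yielding $\kappa_s=1$; for $p=\infty$ one has $p'=1$ and the factor collapses to $(\ln s)^{-1/2}$, matching the stated $\kappa_s$. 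The main point requiring care is the legitimacy of \eqref{(1.2)}, which needs either compactness of $\Lambda$ or reflexivity of the target space; reflexivity handles every case with $1<p'<\infty$, and the endpoint $p=\infty$ (hence $p'=1$) is covered by a compactness check on $\Lambda$, or alternatively by replaying the argument of Theorem \ref{1} on the finite-dimensional section $\mathcal T_N$ where duality is automatic.
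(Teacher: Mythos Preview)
Your proposal is correct and matches the paper's own approach, which simply states that the corollary follows from Theorem~\ref{1}, the duality identity~\eqref{(1.2)}, and basic properties of $n$-widths. You have spelled out precisely these ingredients: monotonicity of $d_n$ and $d^n$ in the set (via the inclusions $U_2\subseteq U_p$ for $p\le 2$) and in the norm (via $\|\cdot\|_{q_0}\le C\|\cdot\|_q$ for $q_0\le q$), together with the self-adjointness of the real multiplier $\Lambda$ to transfer the $p\ge 2$ cases by duality; your remark that the endpoint $p=\infty$ can be handled by working inside the finite-dimensional section $\mathcal{T}_N$ (where duality is automatic) neatly sidesteps the reflexivity/compactness hypothesis in~\eqref{(1.2)}.
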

\begin{myth}\label{2}
Let $\lambda:(0,\infty)\longrightarrow \mathbb{R}$ be a positive and non-increasing function 
and let $\Lambda=\{\lambda_{m,n}\}_{m,n\in \mathbb{N}}$, $\lambda_{m,n}=\lambda(|(m,n)|)$. Suppose $1\leq p\leq 2\leq q\leq \infty $ and that the multiplier operator $\Lambda$ is bounded from $L^1$ to $L^2$. Let
 $\{N_k\}_{k=0}^\infty$ and $\{m_k\}_{k=0}^M$ be sequences of natural numbers such that $N_k<N_{k+1}$, $N_0=0$ and 
 $\sum_{k=0}^{M}m_k\leq \beta$. Then there exists an absolute constant $C>0$ such that
 \begin{center}
 $d_\beta(\Lambda U_p, L^q)\leq C \left(\displaystyle\sum_{k=1}^{M}\lambda(N_k)\varrho_{m_k}+\displaystyle\sum_{k=M+1}^{\infty}\lambda(N_k)\left(\theta_{N_k,N_{k+1}}\right)^{1/p-1/q}\right)$,
 \end{center}
 where 
\begin{equation*}
  \varrho_{m_k}=\frac{\theta_{N_k,N_{k+1}}^{1/p}}{(m_k)^{1/2}}\left \{ \begin{matrix} q^{1/2}, & \mbox{ }2\leq q<\infty,
\\ (\ln \theta_{N_k,N_{k+1}})^{1/2}, & \mbox{ }q=\infty,\end{matrix}\right.\hspace{0.4cm}
and \hspace{0.4cm}\theta_{N_k,N_{k+1}}=\displaystyle\sum_{j=N_k+1}^{N_{k+1}}\dim \mathcal{H}_{j}, k\geq 1.
\end{equation*}
\end{myth}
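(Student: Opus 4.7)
The plan is to construct a $\beta$-dimensional approximating subspace $Y=\bigoplus_{k=1}^{M}V_k\subset L^q(\Omega_d)$ by decomposing $\Lambda$ shell-wise: write $\Lambda\varphi=\sum_{k\geq 0}\Lambda P_k\varphi$, where $P_k$ is a smoothed shell projector onto $\mathcal{T}_{N_k,N_{k+1}}$ (chosen of de la Vallée Poussin type so that it is uniformly bounded on $L^p$ for all $1\leq p\leq\infty$ while reproducing polynomials in $\mathcal{T}_{N_k,N_{k+1}}$). Each $V_k\subset\mathcal{T}_{N_k,N_{k+1}}$ will be chosen with $\dim V_k=m_k$, so that $\dim Y\leq\sum_{k=1}^{M}m_k\leq\beta$. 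The error then splits
\begin{equation*}
d_\beta(\Lambda U_p, L^q)\leq \sup_{\varphi\in U_p}\Bigl(\sum_{k=1}^{M}\mathrm{dist}_{L^q}(\Lambda P_k\varphi, V_k)+\sum_{k=M+1}^{\infty}\|\Lambda P_k\varphi\|_q\Bigr),
\end{equation*}
so the problem reduces to bounding each piece.

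For the tail $k>M$, I would chain three standard estimates: (i) on the $k$-th shell $\lambda_{m,n}\leq\lambda(N_k)$ by monotonicity, so $\|\Lambda P_k\varphi\|_2\leq\lambda(N_k)\|P_k\varphi\|_2$ by $L^2$-orthogonality of the $\mathcal{H}_{m,n}$; (ii) the Nikolskii inequality (\ref{eq:(499)}) gives $\|t\|_q\ll\theta_{N_k,N_{k+1}}^{1/2-1/q}\|t\|_2$ for $t\in\mathcal{T}_{N_k,N_{k+1}}$; (iii) (\ref{eq:(17)}) together with uniform $L^p$-boundedness of $P_k$ yields $\|P_k\varphi\|_2\ll\theta_{N_k,N_{k+1}}^{1/p-1/2}\|\varphi\|_p$. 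Multiplying these three produces the stated tail bound $\lambda(N_k)\theta_{N_k,N_{k+1}}^{1/p-1/q}$. The hypothesis $\Lambda:L^1\to L^2$ bounded is used to ensure $\Lambda\varphi\in L^2$ (so the shell decomposition converges unconditionally) and to justify interchanging sum and $L^q$-norm via Minkowski.

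For the core $k\leq M$, the crucial ingredient is the Levy-mean machinery of Section~4. On the shell $\mathcal{T}_{N_k,N_{k+1}}$ of dimension $s_k=\theta_{N_k,N_{k+1}}$, apply a Pajor--Tomczak-Jaegermann/Gordon-type Kolmogorov-width bound of the form $d_{m_k}(T(B_2^{s_k}),L^q)\ll\sqrt{s_k/m_k}\,M(\|T\cdot\|_q)$, taking $T=\Lambda_{s_k}$. On the shell $\lambda$ is effectively constant equal to $\lambda(N_k)$, so Theorem~\ref{theo3.1}(a)-(b) gives $M(\|\cdot\|_{(\Lambda_{s_k},q)})\ll\lambda(N_k)\,q^{1/2}\omega_d^{1/q-1/2}$ when $2\leq q<\infty$ (and the analogous $(\ln s_k)^{1/2}$ bound when $q=\infty$). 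To pass from $U_p$ on the shell to the Euclidean unit ball $B_2^{s_k}$ I would use (\ref{eq:(17)}), which contributes a factor $s_k^{1/p-1/2}$; combining these, the error for block $k$ is bounded by $\lambda(N_k)\varrho_{m_k}$ with $\varrho_{m_k}$ exactly as stated. Choosing each $V_k$ as the subspace realizing this width and summing over $k=1,\ldots,M$ produces the core contribution.

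The main obstacle I anticipate is matching the constants precisely through the chain Levy-mean $\to$ Kolmogorov-width $\to$ rescaling from $U_p$ to $B_2^{s_k}$, so that the dimensional factor comes out exactly as $\theta_{N_k,N_{k+1}}^{1/p}/m_k^{1/2}$ with no extraneous polynomial in $s_k$; this is where the full strength of Theorem~\ref{theo3.1} and the uniform $L^p$-boundedness of the smoothed projectors $P_k$ must be exploited carefully. A secondary subtlety is that the random-subspace construction produces the $V_k$ in independent probabilistic selections, which one must argue can be done simultaneously (e.g.\ by intersecting finitely many events of positive probability) so that the direct sum $Y=\bigoplus_k V_k$ is well-defined and has total dimension at most $\beta$.
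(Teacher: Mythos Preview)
Your overall architecture---shell decomposition, Pajor--Tomczak-Jaegermann plus Levy means for the core blocks $k\le M$, and Nikolskii-type embeddings for the tail $k>M$---matches the paper's proof. The factor $\theta^{1/p-1/2}\cdot(\theta/m_k)^{1/2}=\theta^{1/p}/m_k^{1/2}$ arises exactly as you describe, and your concern about ``simultaneous probabilistic selection'' is unfounded: Theorem~\ref{theo:(4.1)} is an existence statement, so for each $k\le M$ one simply takes the deterministic subspace it provides.

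The one substantive difference is your choice of projectors. You propose de la Vall\'ee Poussin--type operators $P_k$ that are uniformly bounded on $L^p$ while reproducing $\mathcal{T}_{N_k,N_{k+1}}$. This is both unnecessary and not obviously available here: the shells are defined via $|(m,n)|=\max\{m,n\}$, not the Laplace eigenvalue $m+n$, and uniform $L^p$-boundedness of smoothed multipliers of the form $\psi(\max\{m,n\})$ is not a standard result. The paper sidesteps this entirely by using the \emph{sharp} spectral projections $\phi_{N_k,N_{k+1}}=S_{N_{k+1}}-S_{N_k}$ and obtaining the bound $\|\phi_{N_k,N_{k+1}}\varphi\|_2\le \omega_d^{1/2-1/p}\theta_{N_k,N_{k+1}}^{1/p-1/2}\|\varphi\|_p$ via Young's inequality at $p=1$ (the kernel $\sum Z_e^{(m,n)}$ has $L^2$-norm $(\theta/\omega_d)^{1/2}$), the trivial bound at $p=2$, and Riesz--Thorin interpolation in between. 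This replaces your step (iii) and removes any need for $L^p$-boundedness of the projectors. With that modification your sketch becomes the paper's proof: one obtains the embedding $\Lambda U_p\subseteq\bigoplus_k \lambda(N_k)\theta_{N_k,N_{k+1}}^{1/p-1/2}B_2^{N_k,N_{k+1}}$ and then applies $d_{m_k}(B_2^{s_k},L^q)\ll (s_k/m_k)^{1/2}q^{1/2}$ (proved via duality \eqref{(1.2)} and Theorem~\ref{theo:(4.1)} with the unweighted Levy mean $M(\|\cdot\|_{(q)})$) for the core, and the Nikolskii embedding $B_2\subset\theta^{1/2-1/q}B_q$ for the tail.
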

\begin{proof}
 Let $\mathcal{T}_{M_1,M_2}= \bigoplus_{l=M_1+1}^{M_2} \mathcal{H}_l$, $s=\dim\mathcal{T}_{M_1,M_2}=\sum_{l=M_1+1}^{M_2}d_l$, 
 $B_p^s=U_p\cap \mathcal{T}_{M_1,M_2} $, $B_{(p)}^s=J^{-1}B_p^s$, $\lambda(0)=0$ and $0<\rho<1$. The Theorem \ref{theo:(4.1)} guarantees 
 the existence of a subspace $F_k$ of $\R^s$, with $\dim F_k=k>\rho s$, such that, for all $\alpha\in F_k$,
 \begin{equation*}
  \norm{x}{(2)}\leq CM\left(\norm{\cdot}{(q)}\right)(1-\rho)^{-1/2}\norm{x}{(q)}^0.
 \end{equation*}
 If $m=s-k$, then $(1-\rho)^{-1/2}<\left(s/m\right)^{1/2}$. Applying the Theorem \ref{theo3.1} for $\Lambda_s=Id$, we get 
 \begin{equation*}\label{eq:(81)}
\norm{x}{(2)}\leq C\norm{x}{(q)}^0\left(\frac{s}{m}\right)^{1/2} \left \{ \begin{matrix}  q^{1/2}, & \mbox{ }2\leq q<\infty,
\\ (\ln s)^{1/2}, & \mbox{ }q =\infty. \end{matrix}\right.
\end{equation*}
Therefore, by (\ref{(1.2)})
\begin{eqnarray}\label{eq:(82)}
\nonumber d_m\left(B_2^s,L^q\cap \mathcal{T}_{M_1,M_2}\right)&=&d^m\left(\left(B_{(q)}^s\right)^0,(\R^s,\norm{\cdot}{(2)} )\right)\\
\nonumber &\leq&\displaystyle\sup_{x\in \left(B_{(q)}^s\right)^0\cap F_k} \norm{x}{(2)}\\
&\leq&C\left(\frac{s}{m}\right)^{1/2} \left \{ \begin{matrix}  q^{1/2}, & \mbox{ }2\leq q<\infty,
\\ (\ln s)^{1/2}, & \mbox{ }q =\infty. \end{matrix}\right.
\end{eqnarray}

Let $B_p^{N_k,N_{k+1}}=U_p\cap\mathcal{T}_{N_k,N_{k+1}}$ and for $f\in \Lambda U_p\subseteq L^2(\Omega_d)$, let $S_N(f)=\sum_{(m,n)\in A_N}f\ast \widetilde{Z}^{(m,n)} $, $\phi_{N_k,N_{k+1}}(f):=S_{N_{k+1}}(f)-S_{N_k}(f)$, $k\geq 1$, $\phi_{N_0,N_1}(f)=S_{N_1}(f)$. Since $S_{N_k}(f)\longrightarrow f$ in $L^2(\Omega_d)$, then
\begin{equation*}
 \displaystyle\lim_{j \to{}\infty}{\left\|\displaystyle\sum_{k=j}^\infty \phi_{N_k,N_{k+1}}(f)\right\|_2}=0.
\end{equation*}
Then it is easy to see that 
\begin{equation*}\label{eq:(83)}
 \Lambda U_p \subseteq \displaystyle\bigoplus_{k=0}^{\infty}(\phi_{N_k,N_{k+1}}\circ \Lambda) U_p,\hspace{0.4cm}1\leq p\leq 2.
\end{equation*}
But, for each $\varphi\in U_p$, $ \phi_{N_k,N_{k+1}}(\Lambda\varphi)=\Lambda(\phi_{N_k,N_{k+1}}(\varphi))$, therefore
\begin{equation}\label{eq:(84)}
 \Lambda U_p \subseteq \displaystyle\bigoplus_{k=0}^{\infty}(\Lambda\circ  \phi_{N_k,N_{k+1}}) U_p.
\end{equation}
Now, given $\varphi\in U_p$, 
\begin{eqnarray}\label{eq:(85)}
\nonumber \norm{(\Lambda\circ  \phi_{N_k,N_{k+1}})\varphi}{2} &\leq&\left\|\displaystyle\sum_{l=N_k+1}^{N_{k+1}}\lambda(l-1) \displaystyle\sum_{(m,n)\in A_l\setminus A_{l-1}} \varphi\ast \widetilde{Z}^{(m,n)}\right\|_{2}\\
 \nonumber &\leq&\lambda(N_k)\left\|\displaystyle\sum_{l=N_k+1}^{N_{k+1}}\displaystyle\sum_{(m,n)\in A_l\setminus A_{l-1}} \varphi\ast \widetilde{Z}^{(m,n)}\right\|_{2}\\
 &=&\lambda(N_k)\norm{\phi_{N_k,N_{k+1}}(\varphi)}{2},
 \end{eqnarray}
and by Young's inequality
\begin{eqnarray}\label{eq:(86)}
\norm{\phi_{N_k,N_{k+1}}(\varphi)}{2}&\leq&\norm{\varphi}{p}\left\|\displaystyle\sum_{(m,n)\in A_{N_{k+1}}\setminus A_{N_k}} Z_e^{(m,n)}\right\|_{1/(3/2-1/p)}. 
\end{eqnarray}
Using properties of the zonal harmonic functions, we have
\begin{eqnarray*}
 \left\|\displaystyle\sum_{(m,n)\in A_{N_{k+1}}\setminus A_{N_k}} Z_e^{(m,n)}\right\|_{2}^2&=&\displaystyle\sum_{(m,n)\in A_{N_{k+1}}\setminus A_{N_k}}\displaystyle\int_{\Omega_d}Z_e^{(m,n)}\overline{Z_e^{(m,n)}}d\sigma_d\\
 &=&\displaystyle\sum_{(m,n)\in A_{N_{k+1}}\setminus A_{N_k}} Z_e^{(m,n)}(e)\\
 &=&\displaystyle\sum_{(m,n)\in A_{N_{k+1}}\setminus A_{N_k}}\frac{d_{m,n}}{\omega_d}=\frac{\theta_{N_k,N_{k+1}}}{\omega_d}
\end{eqnarray*}
and thus by (\ref{eq:(86)}), for $p=1$ we obtain $ \norm{\phi_{N_k,N_{k+1}}(\varphi)}{2}\leq \omega_d^{-1/2}\theta_{N_k,N_{k+1}}^{1/2}\norm{\varphi}{1}$. Furthermore, for $p=2$, and $\varphi\in U_2$ we get $\norm{\phi_{N_k,N_{k+1}}(\varphi)}{2}\leq\norm{\varphi}{2}$. Applying the Riesz-Thorin Interpolation theorem to the last two inequalities, it follows for $1\leq p\leq 2$ that
\begin{equation*}
\norm{\phi_{N_k,N_{k+1}}\varphi}{2}\leq\omega_d^{1/2-1/p}\theta_{N_k,N_{k+1}}^{1/p-1/2}\norm{\varphi}{p}\leq\omega_d^{1/2-1/p}\theta_{N_k,N_{k+1}}^{1/p-1/2}.
\end{equation*}
Hence, from (\ref{eq:(85)}) and (\ref{eq:(84)}), 
\begin{equation}\label{eq:(87)}
 \Lambda U_p \subseteq \displaystyle\bigoplus_{k=0}^{\infty}\omega_d^{1/2-1/p}\lambda(N_k)\theta_{N_k,N_{k+1}}^{1/p-1/2}B_2^{N_k,N_{k+1}}.
\end{equation}
By (\ref{eq:(17)}) for $2\leq q\leq\infty$, we have for $\varphi\in B_2^{N_k,N_{k+1}}$ that
\begin{eqnarray*}
\norm{\varphi}{q}&=&\norm{\phi_{N_k,N_{k+1}}\varphi}{q}\leq\omega_d^{1/q-1/2}\theta_{N_k,N_{k+1}}^{1/2-1/q}\norm{\phi_{N_k,N_{k+1}}\varphi}{2}
 \leq\omega_d^{1/q-1/2}\theta_{N_k,N_{k+1}}^{1/2-1/q},
\end{eqnarray*}
and therefore $B_2^{N_k,N_{k+1}}\subseteq\omega_d^{1/q-1/2}\theta_{N_k,N_{k+1}}^{1/2-1/q}B_q^{N_k,N_{k+1}}$. Consequently by (\ref{eq:(87)}) 
\begin{equation}\label{eq (4.29)}
 \Lambda U_p\subseteq\displaystyle\bigoplus_{k=0}^{M}\omega_d^{1/2-1/p}\theta_{N_k,N_{k+1}}^{1/p-1/2}\lambda(N_k)B_2^{N_k,N_{k+1}}
+ \displaystyle\bigoplus_{k=M+1}^{\infty}\omega_d^{1/q-1/p}\lambda(N_k)\theta_{N_k,N_{k+1}}^{1/p-1/q}B_q^{N_k,N_{k+1}}.
\end{equation}

Finally, using (\ref{eq:(82)}), (\ref{eq (4.29)}) and properties of $n$-widths, we get 
\begin{eqnarray*}
 d_\beta(\Lambda U_p,L^q) &\leq&\displaystyle\sum_{k=0}^M\omega_d^{1/2-1/p}\lambda(N_k)\theta_{N_k,N_{k+1}}^{1/p-1/2}d_{m_k}(B_2^{N_k,N_{k+1}},L^q\cap \mathcal{T}_{N_k,N_{k+1}})\\
 &+& \displaystyle\sum_{k=M+1}^\infty\omega_d^{1/q-1/p}\lambda(N_k)\theta_{N_k,N_{k+1}}^{1/p-1/q}d_0(B_q^{N_k,N_{k+1}},L^q\cap \mathcal{T}_{N_k,N_{k+1}})\\
 &\leq&C\left(\displaystyle\sum_{k=0}^M\lambda(N_k)\varrho_{m_k}+\displaystyle\sum_{k=M+1}^\infty\lambda(N_k)\theta_{N_k,N_{k+1}}^{1/p-1/q}\right).
\end{eqnarray*}
\end{proof}
\begin{myrem}\label{rem4.1}
 We will improve the estimate obtained in the previous theorem, specifying the sequences $N_k$ and $m_k$. We define $N_1=N\in \N$ and
 \begin{equation*}
  N_{k+1}=\min\{l \in \N :e\lambda(l)\leq \lambda(N_k)\}.
 \end{equation*}
 Let $\theta_{N_k,N_{k+1}}$ be as in Theorem \ref{2}. For $\epsilon>0$ we define
 \begin{equation*}
  M=\left[\frac{\ln \theta_{N_1,N_2}}{\epsilon}\right],\hspace{0.3cm}m_k=[e^{-\epsilon k}\theta_{N_1,N_2}]+1,\hspace{0.3cm}k=1,\ldots,M,\hspace{0.3cm}m_0=\theta_{0,N}.
 \end{equation*}
 Hence,
 \begin{eqnarray}\label{eq:(89)}
 \displaystyle\sum_{k=1}^M m_k&=&\displaystyle\sum_{k=1}^M([e^{-\epsilon k}\theta_{N_1,N_2}]+1)
\leq M+\theta_{N_1,N_2}\displaystyle\sum_{k=0}^M(e^{-\epsilon })^k\leq C_\epsilon\theta_{N_1,N_2},
\end{eqnarray}
where $C_\epsilon>0$, depends only on $\epsilon$. Applying the previous theorem for
\begin{equation*}
 \beta=m_0+\displaystyle\sum_{k=1}^M m_k=\displaystyle\sum_{s=0}^N \dim \mathcal{H}_s+\displaystyle\sum_{k=1}^Mm_k ,
\end{equation*}
and writing $d_\beta=d_\beta(\Lambda U_p,L^q)$, we have 
\begin{eqnarray*}
 d_\beta&\leq&C\displaystyle\sum_{k=1}^M\lambda(N_k)\frac{\theta_{N_k,N_{k+1}}^{1/p}}{ m_k^{1/2}}\left \{ \begin{matrix}  q^{1/2}, & \mbox{ }2\leq q<\infty,
 \\ \left(\ln \theta_{N_k,N_{k+1}}\right)^{1/2}, & \mbox{ }q =\infty, \end{matrix}\right.\\
 &+&C\displaystyle\sum_{k=M+1}^\infty\lambda(N_k)\theta_{N_k,N_{k+1}}^{1/p-1/q}\\
 &\leq&eC\lambda(N)\displaystyle\sum_{k=1}^M e^{-k(1-\epsilon/2)}\frac{\theta_{N_k,N_{k+1}}^{1/p}}{ \theta_{N_1,N_2}^{1/2}}\left \{ \begin{matrix}  q^{1/2}, & \mbox{ }2\leq q<\infty,
 \\ \left(\ln \theta_{N_k,N_{k+1}}\right)^{1/2}, & \mbox{ }q =\infty, \end{matrix}\right.\\
 &+&eC\lambda(N)\displaystyle\sum_{k=M+1}^\infty e^{-k}\theta_{N_k,N_{k+1}}^{1/p-1/q}. 
\end{eqnarray*}
\end{myrem}

\begin{mydef}
 Let  $N_k,M$ and $\theta_{N_k,N_{k+1}}$ be as in the previous remark. We say that $\Lambda=\{\lambda_{m,n}\}_{m,n\in \N}\in K_{\epsilon,p}$, $\epsilon>0$, $1\leq p\leq 2$, if
 $\lambda(k+1)\leq\lambda(k)$, $N_{k+1}>N_k$, for all $k\in \N$ and if for all $N\in \N$, there exists a constant $C_{\epsilon,p}$, depending only on $d$, $\epsilon$ and $p$, such that
 \begin{equation}
  \displaystyle\sum_{k=1}^M e^{-k(1-\epsilon/2)}\frac{\theta_{N_k,N_{k+1}}^{1/p}}{ \theta_{N_1,N_2}^{1/2}}\leq C_{\epsilon,p}\theta_{N_1,N_2}^{1/p-1/2}.
 \end{equation}
\end{mydef}
\begin{mycor}\label{cor:(3.3.1)}
 Let $\Lambda=\{\lambda_{m,n}\}_{m,n\in \N}$ and $\theta_{N_k,N_{k+1}}$ be as in the previous theorem, and let $\epsilon> 0$, $N_k$, $M$, $\{m_k\}_{k=0}^M$ and $\beta$
  be as in the previos remark. Let $1\leq p\leq 2\leq q\leq \infty$ and suppose that $\Lambda\in K_{\epsilon,p}$, for a fixed $\epsilon>0$. 
  Then there exists a constant   $C_{\epsilon,p}>0$, such that
 \begin{eqnarray*}
  d_\beta(\Lambda U_p,L^q)&\leq&C_{\epsilon,p}\lambda(N)\theta_{N_1,N_2}^{1/p-1/2}\left \{ \begin{matrix}  q^{1/2}, & \mbox{ }2\leq q<\infty,
 \\ \displaystyle\sup_{1\leq k\leq M}\left(\ln \theta_{N_k,N_{k+1}}\right)^{1/2}, & \mbox{ }q =\infty, \end{matrix}\right.\\
 &+&C_{\epsilon,p}\lambda(N)\displaystyle\sum_{k=M+1}^\infty e^{-k}\theta_{N_k,N_{k+1}}^{1/p-1/q}.
 \end{eqnarray*}
\end{mycor}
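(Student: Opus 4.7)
The plan is to combine Theorem \ref{2} with the particular parameter choices fixed in Remark \ref{rem4.1} and then use the hypothesis $\Lambda\in K_{\epsilon,p}$ to bound the resulting series.

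First, I would verify that the choices $\{N_k\}$, $\{m_k\}$, $M$ and $\beta$ made in Remark \ref{rem4.1} satisfy the hypotheses of Theorem \ref{2}: the sequence $\{N_k\}$ is strictly increasing by the requirement $\Lambda\in K_{\epsilon,p}$, and the inequality $\sum_{k=0}^M m_k\le\beta$ is built into the definition of $\beta$. The iterated defining recursion $e\,\lambda(N_{k+1})\le\lambda(N_k)$ with $N_1=N$ yields $\lambda(N_k)\le e\,\lambda(N)\,e^{-k}$, and this is exactly what allows the estimate displayed at the end of Remark \ref{rem4.1} to factor out $\lambda(N)$ weighted by a geometric sequence $e^{-k}$.

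Second, starting from that end-of-remark bound, I would handle the two sums separately. For the first sum I would split on the value of $q$. When $2\le q<\infty$ the factor $q^{1/2}$ does not depend on $k$, so it can be pulled outside the sum; what remains is precisely the series
\begin{equation*}
\sum_{k=1}^M e^{-k(1-\epsilon/2)}\,\frac{\theta_{N_k,N_{k+1}}^{1/p}}{\theta_{N_1,N_2}^{1/2}},
\end{equation*}
which is bounded by $C_{\epsilon,p}\,\theta_{N_1,N_2}^{1/p-1/2}$ directly from the definition of $K_{\epsilon,p}$. When $q=\infty$ the factor $(\ln\theta_{N_k,N_{k+1}})^{1/2}$ does depend on $k$, so I would bound it by $\sup_{1\le k\le M}(\ln\theta_{N_k,N_{k+1}})^{1/2}$, pull this supremum outside the sum, and then apply the same $K_{\epsilon,p}$ estimate. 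The second (tail) sum $\sum_{k=M+1}^\infty e^{-k}\theta_{N_k,N_{k+1}}^{1/p-1/q}$ is already in the form that appears in the statement, so it requires no further manipulation.

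There is no substantive analytic obstacle: the hard work has been carried out in Theorem \ref{2} (the decomposition of $\Lambda U_p$ into blocks and the Pajor-type bound on $d_m(B_2^s,L^q\cap\mathcal{T}_{M_1,M_2})$) and in Remark \ref{rem4.1} (the balanced choice of $N_k$ and $m_k$), while the class $K_{\epsilon,p}$ was defined precisely to make the critical series summable. The only subtlety is the $q=\infty$ case, where one must move the $(\ln\theta_{N_k,N_{k+1}})^{1/2}$ factor outside as a supremum before invoking $K_{\epsilon,p}$; this is the sole point at which the argument departs from a purely mechanical substitution.
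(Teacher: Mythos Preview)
Your proposal is correct and matches the paper's approach exactly: the corollary has no explicit proof in the paper because it is meant to follow immediately by inserting the $K_{\epsilon,p}$ bound into the final displayed estimate of Remark~\ref{rem4.1}, with the $q=\infty$ case handled by pulling out the supremum of $(\ln\theta_{N_k,N_{k+1}})^{1/2}$ just as you describe.
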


\begin{myrem}
 We note that the Theorems \ref{1} and \ref{2}, and the Corollaries  \ref{cor1} and \ref{cor:(3.3.1)} hold if we consider  
 a function $\lambda$ such that $t\rightarrow |\lambda(t)|$ is a positive and non-increasing fuction. Simply we change  $\lambda(t)$ by $|\lambda(t)|$.
 \end{myrem}
 \section{Applications}
 Let $\lambda^{(1)},\lambda^{(2)}:[0,\infty)\rightarrow\R$ be defined by $\lambda^{(1)}(t)=t^{-\gamma}(\ln t)^{-\xi}$ for $t>1$, $\lambda^{(1)}(t)=0$ for $0\leq t\leq 1$, and $\lambda^{(2)}(t)=e^{-\gamma t^r}$, $t\geq 0$, where $\gamma,\xi,r\in\R$, $\gamma,r>0$, $\xi\geq 0$. In this section we consider the multiplier operators associated with the sequences $\Lambda^{(1)}=(\lambda_{m,n}^{(1)})_{m,n\in\N}$ and $\Lambda^{(2)}=(\lambda_{m,n}^{(2)})_{m,n\in\N}$ where $\lambda_{m,n}^{(1)}=\lambda^{(1)}(|(m,n)|)$, $\lambda_{m,n}^{(2)}=\lambda^{(2)}(|(m,n)|)$, $|(m,n)|=\max\{m,n\}$.
 \begin{myrem}\label{rem2.3}
  We will prove that, if $\gamma>(2d-1)/2$, then the multiplier operator $\Lambda^{(1)}$ is bounded from $L^1$ to $L^2$. Given $\varphi\in U_1$, for each $k\in \N^*$, let 
  \begin{equation*}
  \varphi_k=\displaystyle\sum_{l=1}^k\displaystyle\sum_{(m,n)\in A_l\setminus A_{l-1}}\varphi\ast \widetilde{Z}^{(m,n)}.
 \end{equation*}
 We obtain that \begin{equation*}
\Lambda^{(1)}\varphi_k=\displaystyle\sum_{l=1}^k\displaystyle\sum_{(m,n)\in A_l\setminus A_{l-1}}\lambda_{m,n}\varphi\ast \widetilde{Z}^{(m,n)},
\end{equation*}
and thus since $d_l\asymp l^{2d-2}$, we obtain 
\begin{eqnarray*}
\norm{\Lambda^{(1)}\varphi_k}{2}^2&=&\displaystyle\int_{\Omega_d}\Lambda^{(1)}\varphi_k\overline{\Lambda^{(1)}\varphi_k}d\sigma_d
=\displaystyle\sum_{l=1}^k\displaystyle\sum_{(m,n)\in A_l\setminus A_{l-1}}|\lambda_{m,n}|^2\norm{\varphi\ast \widetilde{Z}^{(m,n)}}{2}^2\\
&\leq&\displaystyle\sum_{l=1}^k|\lambda(l-1)|^2\displaystyle\sum_{(m,n)\in A_l\setminus A_{l-1}}\norm{\varphi\ast \widetilde{Z}^{(m,n)}}{2}^2
\leq\displaystyle\sum_{l=1}^k|\lambda(l-1)|^2\displaystyle\sum_{(m,n)\in A_l\setminus A_{l-1}}\frac{d_{m,n}}{\omega_d}\\
&=&\frac{1}{\omega_d}\displaystyle\sum_{l=1}^k\lambda^2(l-1)d_l
\leq C_1\displaystyle\sum_{l=1}^k(l-1)^{-2\gamma}(\ln(l-1))^{-2\xi}l^{2d-2}\\
 &\leq&C_2\displaystyle\sum_{l=1}^kl^{-2\gamma+2d-2}\leq C_3.
 \end{eqnarray*}
 Therefore 
 \begin{center}
 $\norm{\Lambda^{(1)}\varphi}{2}=\displaystyle\sup_k\norm{\Lambda^{(1)}\varphi_k}{2}\leq (C_3)^{1/2}$,
\end{center}
and thus $\Lambda^{(1)}$ is bounded from $L^1$ to $L^2$.
 \end{myrem}
 \begin{myth}\label{myth2.5}
For $1\leq p\leq\infty$, $2\leq q\leq \infty$, $\gamma>(2d-1)/p$ if $1\leq p\leq 2$ and $\gamma>(2d-1)/2$ 
 if $p\geq 2$,  and for all $m\in \N$, we have
 \begin{equation}
 d_m(\Lambda^{(1)}U_p,L^q)\ll m^{-\gamma/(2d-1)+(1/p-1/2)_+}(\ln m)^{-\xi}\left \{ \begin{matrix}  q^{1/2}, & \mbox{ }2\leq q<\infty,
 \\ (\ln m)^{1/2}, & \mbox{ }q =\infty. \end{matrix}\right.
\end{equation}
\end{myth}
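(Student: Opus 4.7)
The plan is to apply Corollary \ref{cor:(3.3.1)} to the sequence $\Lambda^{(1)}$ after verifying the admissibility condition $\Lambda^{(1)}\in K_{\epsilon,p}$, and then to reduce the range $p\ge 2$ to the endpoint $p=2$ by a standard embedding argument. The index $m$ in the Kolmogorov width will be matched to the parameter $\beta$ in the corollary by choosing $N\asymp m^{1/(2d-1)}$.

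First, consider $1\le p\le 2\le q\le\infty$ with $\gamma>(2d-1)/p$. Boundedness from $L^1$ to $L^2$ was already established in Remark \ref{rem2.3}. The function $\lambda^{(1)}$ is positive and non-increasing on $(1,\infty)$, so the sequence $\{N_k\}$ of Remark \ref{rem4.1} is well defined, and a direct comparison using $\lambda^{(1)}(t)=t^{-\gamma}(\ln t)^{-\xi}$ gives $N_{k+1}\asymp N_k\,e^{1/\gamma}$, up to multiplicative constants absorbed into lower order logarithmic factors. Consequently $N_k\asymp N\,e^{(k-1)/\gamma}$, and using Proposition \ref{pr:(3.1.1)} and the telescoping $\theta_{N_k,N_{k+1}}=\dim\mathcal{T}_{N_{k+1}}-\dim\mathcal{T}_{N_k}\asymp N_k^{2d-1}$. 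In particular $\theta_{N_1,N_2}\asymp N^{2d-1}$. Substituting these estimates in the sum in the definition of $K_{\epsilon,p}$,
\begin{equation*}
\sum_{k=1}^{M} e^{-k(1-\epsilon/2)}\,\frac{\theta_{N_k,N_{k+1}}^{1/p}}{\theta_{N_1,N_2}^{1/2}}\;\ll\; N^{(2d-1)(1/p-1/2)}\sum_{k=1}^{M}e^{-k(1-\epsilon/2-(2d-1)/(p\gamma))},
\end{equation*}
and the hypothesis $\gamma>(2d-1)/p$ makes $(2d-1)/(p\gamma)<1$, so for $\epsilon>0$ sufficiently small the geometric series converges and the whole expression is $\ll \theta_{N_1,N_2}^{1/p-1/2}$. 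Hence $\Lambda^{(1)}\in K_{\epsilon,p}$.

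Now apply Corollary \ref{cor:(3.3.1)} with $\beta=\dim\mathcal{T}_N+\sum_{k=1}^{M}m_k$. By Proposition \ref{pr:(3.1.1)} and (\ref{eq:(89)}), $\beta\asymp N^{2d-1}\asymp m$, i.e. we choose $N\asymp m^{1/(2d-1)}$. The principal term in the corollary becomes
\begin{equation*}
\lambda(N)\,\theta_{N_1,N_2}^{1/p-1/2}\;\asymp\; N^{-\gamma}(\ln N)^{-\xi}\,N^{(2d-1)(1/p-1/2)}\;\asymp\; m^{-\gamma/(2d-1)+(1/p-1/2)}(\ln m)^{-\xi},
\end{equation*}
which, multiplied by the factor $q^{1/2}$ (or $\sup_k(\ln\theta_{N_k,N_{k+1}})^{1/2}\ll(\ln m)^{1/2}$ when $q=\infty$) yields exactly the claimed bound for $1\le p\le 2$. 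For the residual sum $\lambda(N)\sum_{k>M}e^{-k}\theta_{N_k,N_{k+1}}^{1/p-1/q}$, the same comparisons give a geometric series with ratio $e^{-(1-(2d-1)(1/p-1/q)/\gamma)}<1$ (again using $\gamma>(2d-1)/p$), whose sum is $\ll e^{-M\alpha}$ for some $\alpha>0$. Since $M\asymp(\ln\theta_{N_1,N_2})/\epsilon\asymp(\ln N)/\epsilon$, the factor $e^{-M\alpha}$ is a negative power of $N$ that, for $\epsilon$ small enough, absorbs the difference between $1/q$ and $1/2$ in the exponent; thus the residual is dominated by the principal term.

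Finally, to handle $p\ge 2$ with $\gamma>(2d-1)/2$, observe that $\sigma_d(\Omega_d)=\omega_d<\infty$ gives the inclusion $U_p\subseteq\omega_d^{1/2-1/p}U_2$ by Hölder, so $d_m(\Lambda^{(1)}U_p,L^q)\le\omega_d^{1/2-1/p}d_m(\Lambda^{(1)}U_2,L^q)$, and the already proved $p=2$ case delivers the bound with exponent $(1/p-1/2)_+=0$. The main technical point, and the step most prone to error, is the membership $\Lambda^{(1)}\in K_{\epsilon,p}$: one must pick $\epsilon$ small enough (depending on $\gamma,p,q,d$) so that both the geometric series defining $K_{\epsilon,p}$ and the geometric series controlling the residual term converge, while simultaneously $M\asymp(\ln N)/\epsilon$ is large enough to force the residual to be dominated by the principal term. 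Everything else is bookkeeping with Proposition \ref{pr:(3.1.1)} and the explicit form of $\lambda^{(1)}$.
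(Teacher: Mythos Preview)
Your proposal is correct and follows essentially the same route as the paper: estimate the growth of the $N_k$ (the paper does this via an auxiliary parameter $\delta>0$ to get the two-sided bound $e^{1/(\gamma+\delta)}N_k\le N_{k+1}\le e^{1/\gamma}N_k+1$, where you simply write $N_{k+1}\asymp e^{1/\gamma}N_k$), deduce $\theta_{N_k,N_{k+1}}\asymp N_k^{2d-1}$, verify $\Lambda^{(1)}\in K_{\epsilon,p}$ via the resulting geometric series, bound the tail $\sum_{k>M}$, apply Corollary~\ref{cor:(3.3.1)}, and reduce $p\ge 2$ to $p=2$ by the inclusion $U_p\subset U_2$. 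The only cosmetic difference is that the paper bounds the residual by showing $\sigma=\sum_{k>M}e^{-k}\theta_{N_k,N_{k+1}}^{1/p-1/q}\ll 1$ directly, whereas you argue that $e^{-M\alpha}$ is a negative power of $N$ absorbing the gap between $1/q$ and $1/2$; both amount to choosing $\epsilon$ small enough.
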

\begin{proof}
 Suppose $1\leq p\leq 2\leq q\leq\infty$, fix $\delta>0$ and let $\lambda_1,\lambda_2:(0,\infty)\rightarrow\R$ be defined by $\lambda_1(t)=t^{-\gamma}$ and $\lambda_2(t)=t^{-\gamma-\delta}$. Let $a>1$ and let $b,b_1,b_2 \in \R$ 
 such that $e\lambda(b)=\lambda(a)$, $e\lambda_1(b_1)=\lambda_1(a)$ and $e\lambda_2(b_2)=\lambda_2(a)$. We have that $b,b_1,b_2>a$ and that $b_1=e^{1/\gamma}a$, $b_2=e^{1/(r+\delta)}a$ and 
 $b_1=b\left(\ln b/\ln a\right)^{\xi/\gamma}$. Since $b>a$, $b_1>b$ and $b>b_2$, we obtain $e^{1/(\gamma+\delta)}a<b<e^{1/\gamma}a$. 
 
 Taking $a=N_k$, we get $e^{1/(\gamma+\delta)}N_k<b<e^{1/\gamma}N_k$. But $N_k<b\leq N_{k+1}<b+1$ and therefore
\begin{equation}\label{eq:(2)}
 e^{1/(\gamma+\delta)}N_k\leq N_{k+1}\leq e^{1/\gamma}N_k+1,\hspace{0.5cm} k\geq1.
\end{equation}
Considering that $d_l=\dim \mathcal{H}_{l}\asymp l^{2d-2}$, integrating the function $x^{2d-2}$, we obtain
\begin{equation}\label{eq:(4)}
\theta_{N_k,N_{k+1}}\asymp \displaystyle\sum_{s=N_k+1}^{N_{k+1}}s^{2d-2}\leq \displaystyle\sum_{s=0}^{N_{k+1}-1}s^{2d-2}+N_{k+1}^{2d-2}\asymp N_{k+1}^{2d-1}.
\end{equation}
Furthermore using (\ref{eq:(2)}), we get
\begin{eqnarray}\label{eq:(3)}
\nonumber \theta_{N_k,N_{k+1}}&\asymp& \displaystyle\sum_{j=N_k+1}^{N_{k+1}}j^{2d-2}\\
\nonumber&\gg&N_{k+1}^{2d-1}-\frac{N_{k+1}^{2d-1}}{2^{2d-1}}\left(1+\frac{N_k}{N_{k+1}}\right)^{2d-1}\\
\nonumber&\geq& N_{k+1}^{2d-1}\left[1-2^{-2d+1}\left(1+(2d-1)e^{-1/\gamma+\delta}\right)\right]\\
\nonumber&+& N_{k+1}^{2d-1}\left[2^{-2d+1}\left(\frac{(2d-1)(2d-2)}{2!}e^{-2/\gamma+\delta}+\cdots+e^{(-2d+1)/(\gamma+\delta)}\right)\right]\\
&=& N_{k+1}^{2d-1}(1-C_{\gamma,\delta,d}).
\end{eqnarray}
Thus, by (\ref{eq:(3)}) and (\ref{eq:(4)}), it follows that
\begin{equation}\label{eq:(6)}
 \theta_{N_k,N_{k+1}}\asymp N_{k+1}^{2d-1},\hspace{0.5cm}k\geq 1,
\end{equation}
and using (\ref{eq:(2)}) we obtain
\begin{equation}\label{eq:(7)}
 e^{k/(\gamma+\delta)}N\leq N_{k+1}\leq C_\gamma e^{k/\gamma}N, \hspace{0.5cm}k\geq 1.
\end{equation}
Let $M=[\epsilon^{-1}\ln \theta_{N_1,N_2}]$ be as in Remark \ref{rem4.1}. We have that $ M\asymp \epsilon^{-1}\ln N\asymp\epsilon^{-1}\ln s$. 
By (\ref{eq:(6)}) and (\ref{eq:(7)}), 
\begin{eqnarray*}
 \sigma&=&\displaystyle\sum_{k=M+1}^{\infty}e^{-k}\left(\theta_{N_k,N_{k+1}}\right)^{1/p-1/q}\ll\displaystyle\sum_{k=M+1}^{\infty}e^{-k}\left(e^{k/\gamma}N\right)^{(2d-1)(1/p-1/q)}\\
 &=&N^{(2d-1)(1/p-1/q)}\displaystyle\sum_{k=M+1}^{\infty}e^{-k}e^{k(2d-1)(1/p-1/q)/\gamma}
 =N^{(2d-1)(1/p-1/q)}\displaystyle\sum_{k=M+1}^{\infty}e^{-k(1-(2d-1)(1/p-1/q)/\gamma)}.
\end{eqnarray*}
Since $M\asymp\epsilon^{-1}\ln N$, then there exists a positive constant $C$ such that 
\begin{equation}\label{eq:(8)}
 \sigma\ll N^{(2d-1)(1/p-1/q)}\displaystyle\sum_{k=[C\epsilon^{-1}\ln N]}^{\infty}e^{-k(1-(2d-1)(1/p-1/q)/\gamma)},
\end{equation}
and since $1\leq p\leq2\leq q \leq\infty$ and $\gamma>(2d-1)(1/p-1/q)$, we have that $1-(1/p-1/q)(2d-1)/\gamma>0$ and thus
\begin{eqnarray*}
\displaystyle\sum_{k=[C\epsilon^{-1}\ln N]}^{\infty}e^{-k(1-(2d-1)(1/p-1/q)/\gamma)}&\asymp&\frac{e^{-(1-(2d-1)(1/p-1/q)/\gamma)C\epsilon^{-1}\ln N}}{1-e^{-1(1-(2d-1)(1/p-1/q)/\gamma)}}\\
&\ll& N^{-C(1-(2d-1)(1/p-1/q)/\gamma)/\epsilon}.
\end{eqnarray*}
Therefore we obtain by (\ref{eq:(8)}) that 
\begin{equation*}
\sigma\ll N^{(2d-1)(1/p-1/q)-C(1-(2d-1)(1/p-1/q)/\gamma)/\epsilon}.
\end{equation*}
So, taking $0<\epsilon<C(1-(2d-1)(1/p-1/q)/\gamma)/(2d-1)(1/p-1/q)$, we get 
\begin{equation}\label{eq:(11)}
 \sigma \ll 1.
\end{equation}
Next we will prove that $\Lambda^{(1)}\in K_{\epsilon,p}$ for some $\epsilon>0$. Remembering that $M\asymp \epsilon^{-1}\ln N$, by  (\ref{eq:(6)}) and (\ref{eq:(7)}), it follows that 
\begin{eqnarray}\label{eq:(9)}
\nonumber\displaystyle\sum_{k=1}^{M}e^{-k(1-\epsilon/2)}\frac{\theta_{N_k,N_{k+1}}^{1/p}}{\theta_{N_1,N_2}^{1/2}}&\leq&\displaystyle\sum_{k=1}^{[C\epsilon^{-1}\ln N]}e^{-k(1-\epsilon/2)}\frac{\theta_{N_k,N_{k+1}}^{1/p}}{\theta_{N_1,N_2}^{1/2}}\\
\nonumber&\ll&\displaystyle\sum_{k=1}^{[C\epsilon^{-1}\ln N]}e^{-k(1-\epsilon/2)}\frac{(e^{k/\gamma}N)^{(2d-1)/p}}{(e^{1/(\gamma+\delta)}N)^{(2d-1)/2 }}\\
\nonumber&\ll& N^{(2d-1)(1/p-1/2)}\displaystyle\sum_{k=1}^{[C\epsilon^{-1}\ln N]}e^{-k(1-\epsilon/2-(2d-1)/\gamma p)}.
\end{eqnarray}
Since $\gamma/(2d-1)>1/p$, we have $t=-(1-\epsilon/2-(2d-1)/\gamma p)<0$ if $0<\epsilon/2<1-(2d-1)/\gamma p$ and consequently
\begin{equation*}\label{eq:(10)}
 \displaystyle\sum_{k=1}^{M}e^{-k(1-\epsilon/2)}\frac{\theta_{N_k,N_{k+1}}^{1/p}}{\theta_{N_1,N_2}^{1/2}}\leq C_{\epsilon,p}N^{(2d-1)(1/p-1/2)}.
\end{equation*}
By (\ref{eq:(7)})  there exists a constant $C_1$, such that $C_1e^{(2d-1)/(\gamma+\delta)}N^{2d-1}\leq\theta_{N_1,N_2}$ and then 
\begin{equation*}
 \displaystyle\sum_{k=1}^{M}e^{-k(1-\epsilon/2)}\frac{\theta_{N_k,N_{k+1}}^{1/p}}{\theta_{N_1,N_2}^{1/2}}\leq C_{\epsilon,p}^{\prime}\theta_{N_1,N_2}^{1/p-1/2},
\end{equation*}
that is, $\Lambda^{(1)}\in K_{\epsilon,p}$. Now, by Remark \ref{rem2.3} we have that $\Lambda^{(1)}$ is bounded from $L^1(\Omega_d)$ to $L^2(\Omega_d)$, 
and hence from Corollary \ref{cor:(3.3.1)} and (\ref{eq:(11)}), we obtain
\begin{equation*}
 d_{\beta}(\Lambda^{(1)}U_p,L^q)\ll \lambda(N)\theta_{N_1,N_2}^{1/p-1/2}\left \{ \begin{matrix} q^{1/2}, &\mbox{ } 2\leq q< \infty,\\ 
\displaystyle \sup_{1\leq k\leq M}(\ln \theta_{N_k,N_{k+1}})^{1/2}, & \mbox{ }q=\infty,\end{matrix}\right.
+\lambda(N).
\end{equation*}
For $1\leq k\leq M$, using (\ref{eq:(6)}), (\ref{eq:(7)}), the definition of $M$, and the fact that $s\asymp N^{2d-1}$, we obtain
\begin{eqnarray*}
 \theta_{N_k,N_{k+1}}&\ll& (e^{k/\gamma}N)^{2d-1}\leq (e^{M/\gamma}N)^{2d-1}\leq e^{(2d-1)(C \ln N)/\gamma\epsilon}N^{2d-1}\\
 &=& N^{(2d-1)+(2d-1)C /\gamma\epsilon}=\left(N^{2d-1}\right)^{1+C /\gamma\epsilon}\ll s^{1+C /\gamma\epsilon},
\end{eqnarray*}
and thus $ \ln (\theta_{N_k,N_{k+1}})\ll \ln (s^{1+C /\gamma\epsilon})=(1+C/\gamma\epsilon)\ln s\ll\ln s$. Furthermore, by  (\ref{eq:(6)}) and (\ref{eq:(7)}), it follows that $\theta_{N_1,N_2}^{1/p-1/2}\ll N^{(2d-1)(1/p-1/2)}$ and 
therefore
\begin{eqnarray*}
 d_\beta(\Lambda^{(1)}U_p,L^q)&\ll& \lambda(N)N^{(2d-1)(1/p-1/2)}\left \{ \begin{matrix} q^{1/2}, &\mbox{ } 2\leq q< \infty,\\ 
(\ln s)^{1/2}, & \mbox{ }q=\infty, \end{matrix}\right.\\
&=& N^{-\gamma}N^{(2d-1)(1/p-1/2)}(\ln N)^{-\xi}\left \{ \begin{matrix} q^{1/2}, &\mbox{ } 2\leq q< \infty,\\ 
(\ln s)^{1/2}, & \mbox{ }q=\infty. \end{matrix}\right.
\end{eqnarray*}
Now, since $s\asymp N^{2d-1}$, then $N^{-\gamma}\asymp s^{-\gamma/(2d-1)}$, $N^{(2d-1)(1/p-1/2)}\asymp s^{(1/p-1/2)}$ and 
$(\ln N)^{-\xi}\asymp(\ln N^d)^{-\xi}\asymp(\ln s)^{-\xi}$. Consequently
\begin{equation}\label{eq:(12)}
 d_\beta(\Lambda^{(1)}U_p,L^q)\ll s^{-\gamma/(2d-1)+(1/p-1/2)}(\ln s)^{-\xi}\left \{ \begin{matrix} q^{1/2}, &\mbox{ } 2\leq q< \infty,\\ 
(\ln s)^{1/2}, & \mbox{ }q=\infty. \end{matrix}\right.
\end{equation}
From Remark \ref{rem4.1}, $\beta\asymp s+\sum_{j=1}^{M}e^{-\epsilon j}\theta_{N_1,N_2}$ and keeping in mind that  $s\asymp N^{2d-1}$ and $M\asymp \epsilon^{-1}\ln N$, by (\ref{eq:(6)}) and (\ref{eq:(7)}) we get 
\begin{equation*}
 \beta\ll N^{2d-1}+e^{(2d-1)/\gamma}N^{2d-1} \displaystyle\sum_{j=1}^{[C\epsilon^{-1}\ln N]}(e^{-\epsilon})^j\ll N^{2d-1},
\end{equation*}
that is, there exists a constant $C_2\in\N$ such that $\beta\leq C_2N^{2d-1}$. Given $m\in \N$, let $N \in \N$ such that $C_2 N^{2d-1}\leq m\leq C_2 (N+1)^{2d-1}$. It follows by (\ref{eq:(12)}) that 
\begin{center}
 $d_m(\Lambda^{(1)}U_p,L^q)\ll m^{-\gamma/(2d-1)+(1/p-1/2)}(\ln m)^{-\xi}\left \{ \begin{matrix} q^{1/2}, &\mbox{ } 2\leq q< \infty,\\ 
(\ln m)^{1/2}, & \mbox{}q=\infty, \end{matrix}\right.$
\end{center}
for $1\leq p\leq 2$. The case $2\leq p\leq \infty$, follows because $\Lambda(U_p)\subset\Lambda(U_2)$.\end{proof}
\begin{myth}\label{myth2.6}
 For $\gamma>(2d-1)/2$ and for all $m\in \N$, we have
 \begin{equation}
  d_m(\Lambda^{(1)}U_p,L^q)\gg m^{-\gamma/(2d-1)}(\ln m)^{-\xi}\kappa_m,
 \end{equation}
 where 
\begin{center}
 $\kappa_m=\left \{ \begin{matrix} 1, & \mbox{ }1\leq p\leq 2, 1<q\leq 2,
 \\1,& \mbox{ }2\leq p< \infty, 2\leq q\leq \infty,
 \\1,& \mbox{ }1\leq p\leq2\leq q\leq \infty, 
\\ (\ln m)^{-1/2}, & \mbox{ }1\leq p\leq 2, q =1,
\\ (\ln m)^{-1/2}, & \mbox{ }p=\infty, 2\leq q\leq \infty. \end{matrix}\right.$
\end{center}
\end{myth}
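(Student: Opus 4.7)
The strategy is to apply Corollary \ref{cor1} directly: the five case distinctions defining $\kappa_s$ in that corollary coincide exactly with the five cases defining $\kappa_m$ in the statement, so the bound is obtained by evaluating the critical ratio $s^{1/2}\bigl(\sum_{l=1}^N(\lambda(l))^{-2}d_l\bigr)^{-1/2}$ for $\lambda = \lambda^{(1)}$ and then reindexing from $n = [\rho s - 1]$ back to an arbitrary integer $m$.

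To set up, fix $\rho \in (0,1)$, say $\rho = 1/2$, and for each $N \in \N$ let $s = s(N) = \dim \mathcal{T}_N$. By Proposition \ref{pr:(3.1.1)} we have $s \asymp N^{2d-1}$ and $d_l \asymp l^{2d-2}$. The function $\lambda^{(1)}$ is positive and strictly decreasing on $(1,\infty)$, so Corollary \ref{cor1} applies. Comparing the series to an integral,
\begin{equation*}
\sum_{l=1}^{N}(\lambda^{(1)}(l))^{-2}d_l \asymp \sum_{l=2}^{N}l^{2\gamma + 2d - 2}(\ln l)^{2\xi} \asymp N^{2\gamma + 2d - 1}(\ln N)^{2\xi},
\end{equation*}
the last equivalence holding since $2\gamma + 2d - 2 > -1$.

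Feeding this into Corollary \ref{cor1} with $n = [\rho s - 1]$ yields, after cancellation,
\begin{equation*}
d_n(\Lambda^{(1)} U_p, L^q) \gg s^{1/2}\bigl(N^{2\gamma + 2d - 1}(\ln N)^{2\xi}\bigr)^{-1/2}\kappa_s \asymp N^{-\gamma}(\ln N)^{-\xi}\kappa_s.
\end{equation*}
Since $n \asymp s \asymp N^{2d-1}$, one then converts the index: $N^{-\gamma} \asymp n^{-\gamma/(2d-1)}$, $\ln N \asymp \ln n$, and $\kappa_s \asymp \kappa_n$ (the logarithmic factor in $\kappa$ is robust under the change of index). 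So along the subsequence $n = n(N) := [\rho s(N) - 1]$ we get $d_n(\Lambda^{(1)} U_p, L^q) \gg n^{-\gamma/(2d-1)}(\ln n)^{-\xi}\kappa_n$, which is exactly the target inequality along that subsequence.

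The last step is to pass from this subsequence to arbitrary $m$. Because $n(N+1)/n(N) \to 1$ as $N \to \infty$, given $m$ I would choose $N$ with $n(N) \leq m \leq n(N+1)$ and invoke the monotonicity $d_m \geq d_{n(N+1)}$; the bounded factor $n(N+1)/m$ then gets absorbed into the implicit constant, and the logarithmic factors change only by multiplicative constants. The main obstacle, as I see it, is not the computation itself but the case-by-case verification that the five regimes in $\kappa_m$ really are matched by Corollary \ref{cor1}; for the ranges involving $p \geq 2$ (in particular $p = \infty$, $2 \leq q \leq \infty$, and $2 \leq p < \infty$, $2 \leq q \leq \infty$), this matching rests on the duality identity (\ref{(1.2)}) already baked into the corollary, and one must simply confirm that the side condition $\gamma > (2d-1)/2$ is compatible with the hypotheses of Theorem \ref{1} used to derive Corollary \ref{cor1} in each regime.
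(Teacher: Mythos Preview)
Your proposal is correct and follows essentially the same route as the paper: estimate $\sum_{l=1}^{N}(\lambda^{(1)}(l))^{-2}d_l \asymp N^{2\gamma+2d-1}(\ln N)^{2\xi}$, apply Corollary~\ref{cor1} with $\rho=1/2$, and convert the index using $s\asymp N^{2d-1}$. If anything, you are slightly more careful than the paper, which (i) cites Corollary~\ref{cor:(3.3.1)} when it clearly means Corollary~\ref{cor1}, and (ii) passes from the subsequence $[s/3]$ to arbitrary $m$ more implicitly than you do.
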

\begin{proof}
We have that $d_l\asymp l^{2d-2}$ and $s=\dim\mathcal{T}_N\asymp N^{2d-1}$, and thus 
\begin{eqnarray*}
  \left(\displaystyle\sum_{l=1}^N\left(\lambda(l)\right)^{-2}d_l\right)^{-1/2}&\asymp&\left(\displaystyle\sum_{l=1}^N(l^{-\gamma}(\ln l)^{-\xi})^{-2}l^{2d-2}\right)^{-1/2}
  \geq\left(\displaystyle\sum_{l=1}^NN^{2\gamma+2d-2}(\ln N)^{2\xi}\right)^{-1/2}\\
  &\asymp&s^{-\gamma/(2d-1)-1/2}(\ln s)^{-\xi}.
 \end{eqnarray*}
 Applying the Corollary \ref{cor:(3.3.1)} with $\rho=1/2$,  we obtain
 \begin{eqnarray*}
 d_{[(s-2)/2]}(\Lambda^{(1)}U_p,L^q)&\geq& C (1-1/2)^{1/2}s^{1/2}\left(\displaystyle\sum_{l=1}^{N}\left(\lambda(l)\right)^{-2}d_l \right)^{-1/2}\kappa_s\\
&\gg&s^{1/2}s^{-\gamma/(2d-1)-1/2}(\ln s)^{-\xi}\kappa_s
=s^{-\gamma/(2d-1)}(\ln s)^{-\xi}\kappa_s.
\end{eqnarray*}
Finally, taking $m=[s/3]\leq[(s-2)/2]$ we have that $m\asymp s$ and thus
\begin{equation}
\nonumber d_m(\Lambda^{(1)}U_p,L^q)\geq d_{[(s-2)/2]}(\Lambda^{(1)}U_p,L^q)\geq s^{-\gamma/(2d-1)}(\ln s)^{-\xi}\kappa_s\gg m^{-\gamma/(2d-1)}(\ln m)^{-\xi}\kappa_m.
\end{equation}\end{proof}
\begin{myrem}\label{rem2.4}
  We will prove that the multiplier operator $\Lambda^{(2)}$ is bounded from $L^p$ to $L^q$, for $1\leq p,q\leq\infty$.
  We will show that this result is true for $p=1$ and $q=\infty$ and the other cases will follow immediately from inequalities between norms. Given $\varphi\in U_1$, we have that 
   \begin{equation*}
  \Lambda^{(2)}\varphi\sim\displaystyle\sum_{l=1}^{\infty}\displaystyle\sum_{(m,n)\in A_l\setminus A_{l-1}}\lambda_{m,n}\varphi\ast \widetilde{Z}^{(m,n)}
 \end{equation*}
 and thus 
 \begin{eqnarray*}
 \norm{\Lambda^{(2)}\varphi}{\infty}&\leq&\displaystyle\sup_{\omega\in \Omega_d}\displaystyle\sum_{l=1}^{\infty}\left|\displaystyle\sum_{(m,n)\in A_l\setminus A_{l-1}}\lambda_{m,n}\varphi\ast \widetilde{Z}^{(m,n)}(\omega)\right|\leq\displaystyle\sum_{l=1}^{\infty}\norm{a_l}{\infty},
 \end{eqnarray*}
where $a_l(\omega)=\sum_{(m,n)\in A_l\setminus A_{l-1}}\lambda_{m,n}\varphi\ast \widetilde{Z}^{(m,n)}(\omega)$. Let 
\begin{equation*}
 D_l(\omega)=\displaystyle\sum_{(m,n)\in A_l\setminus A_{l-1}}\lambda_{m,n} Z_e^{(m,n)}(\omega),\hspace{0.5cm}\overline{D_l}(\omega)=\displaystyle\sum_{(m,n)\in A_l\setminus A_{l-1}}\lambda^{1/2}_{m,n} Z_e^{(m,n)}(\omega).
\end{equation*}
We have that $D_l=\overline{D_l}\ast\overline{D_l}$ and $a_l=\varphi\ast D_l$. It follows by Young's inequality that 
\begin{eqnarray*}
 \norm{D_l}{\infty}&=&\norm{\overline{D_l}\ast\overline{D_l}}{\infty}\leq\norm{\overline{D_l}}{2}\norm{\overline{D_l}}{2}\\
 &=&\displaystyle\sum_{(m,n)\in A_l\setminus A_{l-1}}\lambda_{m,n}\frac{d_{m,n}}{\omega_d}\\
 &\leq&\frac{\lambda(l-1)}{\omega_d}d_l\leq C_1\lambda(l-1)l^{2d-2}
\end{eqnarray*}
and hence 
\begin{equation}
 \norm{a_l}{\infty}=\norm{\varphi\ast D_l}{\infty}\leq\norm{\varphi}{1}\norm{D_l}{\infty}\leq C_1\lambda(l-1)l^{2d-2}.\nonumber
\end{equation}
 Moreover there is a constant $C_2$ such that $e^{-\gamma l^r}l^{2d}\leq C_2$, for all $l\in \N$ and therefore 
 \begin{equation}
 \norm{a_l}{\infty}\leq C_1\lambda(l-1)l^{2d-2}=C_1 e^{-\gamma (l-1)^r}l^{2d}l^{-2}\leq C_3 e^{-\gamma (l-1)^r}(l-1)^{2d}l^{-2}\leq C_4 l^{-2}.\nonumber
\end{equation}
Thus
\begin{equation}
 \norm{\Lambda^{(2)}\varphi}{\infty}\leq\displaystyle\sum_{l=1}^{\infty}\norm{a_l}{\infty}\leq\displaystyle\sum_{l=1}^{\infty}C_4 l^{-2}=C_5,\nonumber
\end{equation}
so $\Lambda^{(2)}$ is bounded from $L^1(\Omega_d)$ to $L^\infty(\Omega_d)$ .
 \end{myrem}
\begin{myth}\label{myth2.7}
 Consider the sequences $\phi_k=\dim \mathcal{T}_k$, $\psi_k=\phi_k-\phi_k^{1-r/(2d-1)}-1$ 
 and $\kappa_k$ as in Theorem \ref{myth2.6}. Then 
 \begin{equation}\label{eq:(4.2.1)}
 d_{[\psi_k]}(\Lambda^{(2)}U_p,L^q)\gg e^{-\mathcal{R}\phi_k^{r/(2d-1)}}\kappa_k,\hspace{0.3cm}r>0,\hspace{0.2cm} k\in\N,
\end{equation}
\begin{equation}\label{eq:(4.2.2)}
 d_{[\psi_k]}(\Lambda^{(2)}U_p,L^q)\gg e^{-\mathcal{R}\psi_k^{r/(2d-1)}}\kappa_k,\hspace{0.3cm}0<r\leq 2d-1,\hspace{0.2cm} k\in\N,
\end{equation}
\begin{equation}\label{eq:(4.2.3)}
 d_{\phi_k-1}(\Lambda^{(2)}U_p,L^q)\gg e^{-\mathcal{R}\phi_k^{r/(2d-1)}}\kappa_k,\hspace{0.3cm}r> 2d-1,\hspace{0.2cm} k\in\N,
\end{equation}
\begin{equation}\label{eq:(4.2.4)}
 d_k(\Lambda^{(2)}U_p,L^q)\gg e^{-\mathcal{R}k^{r/(2d-1)}}\kappa_k,\hspace{0.3cm}0<r\leq 1,\hspace{0.2cm} k\in\N,
\end{equation}
where
\begin{equation*}
 \mathcal{R}=\gamma\left(\frac{d!(d-1)!}{2}\right)^{r/(2d-1)}.
\end{equation*}
\end{myth}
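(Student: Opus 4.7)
My plan is to apply Corollary \ref{cor1} (which bundles Bernstein-width lower bounds via Pajor's theorem and Levy-mean estimates) with suitable choices of $(N,\rho)$ for each of the four estimates, and to deduce (\ref{eq:(4.2.2)}) and (\ref{eq:(4.2.4)}) from (\ref{eq:(4.2.1)}) by elementary comparisons. The key preliminary is the asymptotic
\[
S_{N}\;:=\;\sum_{l=1}^{N}(\lambda^{(2)}(l))^{-2}d_{l}\;\asymp\;e^{2\gamma N^{r}}\,N^{\beta(r)},
\]
where $\beta(r)=2d-1-r$ for $0<r\le 1$ (a Laplace-type computation, the effective sum being the last $\sim N^{1-r}$ terms) and $\beta(r)=2d-2$ for $r\ge 1$ (consecutive terms grow by a factor $\ge e^{2\gamma r}>1$, so the last term dominates geometrically). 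In addition, using Proposition \ref{pr:(3.1.1)} together with Faulhaber-type estimation of $\sum d_{l}$ one obtains the refined expansion
\[
\mathcal{R}\phi_{k}^{r/(2d-1)}\;=\;\gamma k^{r}+A\,k^{r-1}(1+o(1)),\qquad A>0,
\]
which is the source of the crucial exponential gap $e^{\mathcal{R}\phi_{k}^{r/(2d-1)}-\gamma k^{r}}\ge e^{(A/2)k^{r-1}}$ for large $k$.

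For (\ref{eq:(4.2.1)}) I take $N=k$ and $\rho=([\psi_{k}]+1)/\phi_{k}$, so that $[\rho\phi_{k}-1]=[\psi_{k}]$. One computes $(1-\rho)\asymp k^{-r}$ if $r\le 2d-1$ and $(1-\rho)\asymp k^{-(2d-1)}$ if $r>2d-1$; together with $s=\phi_{k}\asymp k^{2d-1}$ and the sum asymptotics, the Corollary gives $d_{[\psi_{k}]}\gg k^{\alpha(r)}e^{-\gamma k^{r}}\kappa_{k}$, where $\alpha(r)=0$ for $r\le 1$, $\alpha(r)=(1-r)/2$ for $1<r\le 2d-1$, and $\alpha(r)=-(d-1)$ for $r>2d-1$. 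For $r\le 1$ the bound already coincides with $e^{-\mathcal{R}\phi_{k}^{r/(2d-1)}}\kappa_{k}$ up to constants, because the two exponents differ by $O(1)$; for $r>1$ the polynomial loss $k^{\alpha(r)}$ is absorbed by the exponential gain $e^{(A/2)k^{r-1}}$, since $k^{r-1}$ dwarfs $|\alpha(r)|\ln k$. Estimate (\ref{eq:(4.2.2)}) then follows from (\ref{eq:(4.2.1)}): a mean-value estimate yields $|\phi_{k}^{r/(2d-1)}-\psi_{k}^{r/(2d-1)}|\le C$ for $r\le 2d-1$ (using $\phi_{k}-\psi_{k}\asymp\phi_{k}^{1-r/(2d-1)}$ and $\phi_{k}^{r/(2d-1)-1}\le 1$), so $e^{-\mathcal{R}\phi_{k}^{r/(2d-1)}}\asymp e^{-\mathcal{R}\psi_{k}^{r/(2d-1)}}$ up to a bounded multiplicative constant.

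For (\ref{eq:(4.2.3)}), applying the Corollary with $N=k$ at the index $\phi_{k}-1$ would require $\rho=1$; I instead take $N=k+1$ and $\rho=\phi_{k}/\phi_{k+1}$, so $[\rho\phi_{k+1}-1]=\phi_{k}-1$ and $(1-\rho)=d_{k+1}/\phi_{k+1}\asymp k^{-1}$. For $r>2d-1$ (hence $r>1$), using $S_{k+1}\asymp e^{2\gamma(k+1)^{r}}k^{2d-2}$ and $s^{1/2}\asymp k^{(2d-1)/2}$, the polynomial factors cancel exactly and one finds
\[
d_{\phi_{k}-1}\;\gg\;k^{-1/2}\,k^{(2d-1)/2}\,e^{-\gamma(k+1)^{r}}\,k^{-(d-1)}\,\kappa_{k}\;=\;e^{-\gamma(k+1)^{r}}\kappa_{k}.
\]
A careful check of the expansion of $\phi_{k}$ gives $A\ge\gamma r$ (with equality when $d=2$, strict for $d\ge 3$), so $\mathcal{R}\phi_{k}^{r/(2d-1)}\ge\gamma(k+1)^{r}$ asymptotically and hence $e^{-\gamma(k+1)^{r}}\ge e^{-\mathcal{R}\phi_{k}^{r/(2d-1)}}$, proving (\ref{eq:(4.2.3)}).

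Finally, (\ref{eq:(4.2.4)}) is deduced from (\ref{eq:(4.2.1)}) by monotonicity: given $k\in\N$ pick $j$ with $[\psi_{j}]\le k<[\psi_{j+1}]$, so $d_{k}\ge d_{[\psi_{j+1}]}\gg e^{-\mathcal{R}\phi_{j+1}^{r/(2d-1)}}\kappa_{j+1}$. For $r\le 1$, the estimate $\phi_{j+1}-k\lesssim j^{2d-1-r}$ (obtained from $k\ge\psi_{j}$ and $\phi_{j+1}-\phi_{j}\asymp j^{2d-2}$), together with the mean-value bound at $\xi\in(k,\phi_{j+1})$, yields
\[
\phi_{j+1}^{r/(2d-1)}-k^{r/(2d-1)}\;\lesssim\;k^{(r-2d+1)/(2d-1)}\cdot j^{2d-1-r}\;\asymp\;1,
\]
and $\kappa_{j+1}\asymp\kappa_{k}$ since $j\asymp k^{1/(2d-1)}$. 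The most delicate step in the whole plan is the polynomial/exponential trade-off appearing in the proofs of (\ref{eq:(4.2.1)}) and (\ref{eq:(4.2.3)}) for $r>1$: one must verify that the leading correction $Ak^{r-1}$ in the expansion of $\mathcal{R}\phi_{k}^{r/(2d-1)}-\gamma k^{r}$ has the right sign and magnitude to absorb the polynomial loss $k^{\alpha(r)}$, which ultimately reduces to the positivity (and size) of the coefficient of $k^{2d-2}$ in the asymptotic expansion of $\phi_{k}=\dim\mathcal{T}_{k}$.
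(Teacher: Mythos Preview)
Your plan follows the paper's strategy exactly: apply Corollary \ref{cor1} with $N=k$ and $\rho=1-\phi_{k}^{-r/(2d-1)}$ so that $[\rho s-1]=[\psi_{k}]$, estimate $\sum_{l\le N}(\lambda(l))^{-2}d_{l}$, and then derive (\ref{eq:(4.2.2)}) and (\ref{eq:(4.2.4)}) from (\ref{eq:(4.2.1)}) by mean-value comparisons. Your treatments of (\ref{eq:(4.2.2)}) and (\ref{eq:(4.2.4)}) are essentially the paper's.

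Where you diverge is in the handling of the sum $S_{N}$ for $r>1$. You use the \emph{sharp} asymptotic $S_{N}\asymp N^{2d-2}e^{2\gamma N^{r}}$, which leaves a polynomial deficit $k^{\alpha(r)}$ that you then propose to absorb into the exponential gap $\mathcal{R}\phi_{k}^{r/(2d-1)}-\gamma k^{r}\sim Ak^{r-1}$. The paper instead uses the cruder upper bound $S_{N}\ll N^{2d-1-r}e^{2\gamma(N+1)^{r}}$ (its estimate (\ref{eq:(4.2.5)}), which is a valid upper bound for every $r>0$ because $e^{2\gamma((N+1)^{r}-N^{r})}$ dominates any polynomial in $N$); with this bound the polynomial factors from $(1-\rho)^{1/2}s^{1/2}$ cancel exactly and one lands directly on $\gg e^{-\gamma(N+1)^{r}}$. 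This is simpler and avoids the need to know $A>0$. For (\ref{eq:(4.2.3)}) the paper just notes that when $r>2d-1$ one has $\phi_{k}^{1-r/(2d-1)}\in(0,1)$, so $[\psi_{k}]$ differs from $\phi_{k}-1$ by at most one, and (\ref{eq:(4.2.3)}) is essentially (\ref{eq:(4.2.1)}); your detour through $N=k+1$ is not needed.

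The genuine gap in your plan is the claim $A\ge\gamma r$. You verify it for $d=2$ (where indeed $\phi_{k}=(k+1)^{3}$, hence $\mathcal{R}\phi_{k}^{r/3}=\gamma(k+1)^{r}$ and $A=\gamma r$ on the nose), but for $d\ge3$ you only assert that it ``reduces to the positivity (and size) of the coefficient of $k^{2d-2}$ in $\phi_{k}$'' without computing that coefficient. Proposition \ref{pr:(3.1.1)} gives only $\phi_{k}\ge Dk^{2d-1}$, which yields $\mathcal{R}\phi_{k}^{r/(2d-1)}\ge\gamma k^{r}$ and hence $A\ge0$, but says nothing about whether $A\ge\gamma r$. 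Since your route to both (\ref{eq:(4.2.1)}) (for $r>1$) and (\ref{eq:(4.2.3)}) hinges on this, it is not a minor bookkeeping step; you would need to compute $\phi_{k}$ to second order from the explicit formula for $d_{m,n}$. Adopting the paper's looser sum bound removes the need for $A>0$ in (\ref{eq:(4.2.1)}), and for (\ref{eq:(4.2.3)}) the paper's observation that $[\psi_{k}]$ and $\phi_{k}-1$ coincide up to $O(1)$ lets you bypass the sharp inequality $A\ge\gamma r$ altogether.
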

\begin{proof}
For  $\theta,\eta,r>0$ and $\eta\geq r-1$, we have
 \begin{eqnarray}\label{eq:(4.2.5)}
  \displaystyle\sum_{k=1}^{N}e^{\theta k^r}k^\eta=\displaystyle\sum_{k=1}^{N}\left( \frac{1}{\theta r}k^{1-r+\eta} \right) e^{\theta k^r}\theta r k^{r-1}
  \ll N^{1-r+\eta}\displaystyle\int_1^{N+1}e^{\theta x^r}\theta r x^{r-1}dx
  \leq N^{1-r+\eta}e^{\theta (N+1)^r}.
 \end{eqnarray}
Since there is a positive constant $C_1$ such that $(N+1)^r\leq N^r+C_1N^{-1}$, then
\begin{equation*}
 e^{\theta(N+1)^r}\leq  e^{\theta C_r}e^{\theta N^r}=C_2e^{\theta N^r}.
\end{equation*}
Let us fix $N\in \N$ and let $s=\dim\mathcal{T}_N$, $D=2/d!(d-1)!$. By the Proposition \ref{pr:(3.1.1)} we have 
\begin{equation*}\label{eq:(4.2.7)}
  DN^{2d-1}\leq s\leq D N^{2d-1}+C_3N^{2d-2}.
 \end{equation*}
 From the above estimates, we obtain $N^{r}\leq D^{-r/(2d-1)}s^{r/(2d-1)}$ and $N^r\geq D^{-r/(2d-1)} s^{r/(2d-1)}-C_4N^{r-1} $ for a positive constant $C_4$ and for $r>0$. Therefore, for $\mathcal{R}=\gamma D^{-r/(2d-1)}$ and the fact that $s\asymp N^{2d-1}$, we get
 \begin{equation}\label{eq:(4.2.12)}
 -\mathcal{R} s^{r/(2d-1)}\leq -\gamma N^{r}\leq -\mathcal{R} s^{r/(2d-1)}+C_5s^{(r-1)/(2d-1)},
\end{equation}
for $r>0$. The above estimates also hold if we put $N+1$ in the place of $N$. If $\rho=1-s^{-r/(2d-1)}$, using (\ref{eq:(4.2.5)}), we get
\begin{eqnarray*}
 (1-\rho)^{1/2}s^{1/2}\left(\displaystyle\sum_{l=1}^N(\lambda(l))^{-2}d_l\right)^{-1/2}&\gg&s^{-r/2(2d-1)+1/2}\left(N^{1-r+2d-2}e^{2\gamma N^r}\right)^{-1/2}
 \gg e^{-\gamma N^r},
\end{eqnarray*}
 and hence it follows by Corollary \ref{cor:(3.3.1)} and (\ref{eq:(4.2.12)}) that
 \begin{eqnarray*}
 d_{[s-s^{1-r/(2d-1)}-1]}(\Lambda^{(2)}U_p,L^q)&=&d_{[\rho s-1]}(\Lambda^{(2)}U_p,L^q)
 \gg e^{-\gamma N^r}\kappa_s
 \gg e^{-\mathcal{R} s^{r/(2d-1)}}\kappa_s.
\end{eqnarray*}
The above estimate says that, for $r>0$
\begin{equation*}
 d_{[\psi_N]}(\Lambda^{(2)}U_p,L^q)\gg e^{-\mathcal{R} \phi_N^{r/(2d-1)}}\kappa_{\phi_N},
\end{equation*}
but, since $\phi_N=\dim\mathcal{T}_N \asymp N^{2d-1}$, we have that $\kappa_{\phi_N}\asymp\kappa_{N}$ and therefore we get (\ref{eq:(4.2.1)}). 
Furthermore, if $r> 2d-1$, then $[\psi_N]=[\phi_N-\phi_N^{1-{r/(2d-1)}}-1]=\phi_N-1$ and  thus (\ref{eq:(4.2.3)}) follows from (\ref{eq:(4.2.1)}).\\
\indent Consider $0<r\leq2d-1$. Then
\begin{eqnarray*}\label{eq:(4.2.11)}
\left(1-s^{-r/(2d-1)}\right)^{r/(2d-1)}=1-s^{-r/(2d-1)}\frac{r}{2d-1}\left(1+\frac{2d-1-r}{2!(2d-1)}s^{-r/(2d-1)}+\cdots\right)
 =1-s^{-r/(2d-1)}S_s,
\end{eqnarray*}
and there exists a positive constant $C_r$ such that $0<S_s\leq C_r$ for all $s\in\N$. Therefore
\begin{eqnarray*}
 (\psi_N+1)^{r/(2d-1)}&=&\left(s-s^{1-r/(2d-1)}\right)^{r/(2d-1)}=s^{r/(2d-1)}\left(1-s^{-r/(2d-1)}\right)^{r/(2d-1)}\\
 &\geq&s^{r/(2d-1)}\left(1-C_r s^{-r/(2d-1)}\right)=s^{r/(2d-1)}-C_r=\phi_N^{r/(2d-1)}-C_r,
\end{eqnarray*}
thus 
\begin{equation}\label{eq:(4.2.13)}
 e^{-\mathcal{R}(\psi_N+1)^{r/(2d-1)}}\leq e^{-\mathcal{R}(\phi_N^{r/(2d-1)}-C_r)}\ll e^{-\mathcal{R}\phi_N^{r/(2d-1)}}.
\end{equation}
and from (\ref{eq:(4.2.1)}) we get 
\begin{equation}\label{eq:(4.2.15)}
 d_{[\psi_N]}(\Lambda^{(2)}U_p,L^q)\gg e^{-\mathcal{R}(\psi_N+1)^{r/(2d-1)}}\kappa_{N}.
\end{equation}
Now let $0<r\leq 1$. Using the Mean Value Theorem we get $(N+1)^r-N^r=r(N+c)^{r-1}<rN^{r-1}\leq r$, and then 
$1<e^{-\gamma N^r}/e^{-\gamma(N+1)^r}\leq e^{\gamma r}$.
From (\ref{eq:(4.2.1)}) and (\ref{eq:(4.2.12)}) it follows that 
\begin{eqnarray*}
 d_{[\psi_{N+1}]}(\Lambda^{(2)}U_p,L^q)&\gg&e^{-\gamma(N+1)^r}\kappa_{N+1}\gg e^{-\gamma N^r}\kappa_{N}\\
 &\gg& e^{-\mathcal{R}s^{r/(2d-1)}}\kappa_{N}\\
 &=&e^{-\mathcal{R}\phi_N^{r/(2d-1)}}\kappa_{N},
\end{eqnarray*}
and using (\ref{eq:(4.2.13)}) we obtain 
\begin{equation}\label{eq:(4.2.17)}
 d_{[\psi_{N+1}]}(\Lambda^{(2)}U_p,L^q)\gg e^{-\mathcal{R}\phi_N^{r/(2d-1)}}\kappa_{N}
\gg e^{-\mathcal{R}(\psi_N+1)^{r/(2d-1)}}\kappa_{N}.
\end{equation}
If $k\in\N$ and $[\psi_N]<k\leq[\psi_{N+1}]$, then  from (\ref{eq:(4.2.17)})  
\begin{eqnarray*}
 d_k(\Lambda^{(2)}U_p,L^q)&\gg&d_{[\psi_{N+1}]}(\Lambda^{(2)}U_p,L^q)\gg e^{-\mathcal{R}(\psi_N+1)^{r/(2d-1)}}\kappa_{N}\\
 &\gg&e^{-\mathcal{R}\psi_N^{r/(2d-1)}}\kappa_{N}\geq e^{-\mathcal{R}k^{r/(2d-1)}}\kappa_{N}\\
 &\gg&e^{-\mathcal{R}k^{r/(2d-1)}}\kappa_{k}
\end{eqnarray*}
and thus we proved (\ref{eq:(4.2.4)}).\\
\indent Finally, if $0<r\leq 2d-1$, we get $e^{\mathcal{R}l^{r/(2d-1)}}\asymp e^{\mathcal{R}(l+1)^{r/(2d-1)}}$, $l\in\N$ and 
thus it follows from (\ref{eq:(4.2.15)}) that
\begin{equation*}
 d_{[\psi_N]}(\Lambda^{(2)}U_p,L^q)\gg e^{-\mathcal{R}(\psi_N+1)^{r/(2d-1)}}\kappa_{N}\gg e^{-\mathcal{R}\psi_N^{r/(2d-1)}}\kappa_{N},
\end{equation*}
that is, we proved (\ref{eq:(4.2.2)}).
\end{proof}
\begin{myth}\label{myth2.8}
 Let $\phi_k$ be as in Theorem \ref{myth2.7}. Then for $0< r \leq 1$, $1\leq p\leq\infty$, $2\leq q\leq\infty$ 
 and for all $k\in \N$, we have
 \begin{equation}\label{eq:(2.0.0000)}
 d_k(\Lambda^{(2)}U_p,L^q) \ll e^{-\mathcal{R}k^{r/(2d-1)}}k^{(1-r/(2d-1))(1/p-1/2)_+}\left \{ \begin{matrix} 1, & \mbox{ }2\leq q<\infty,
\\ (\ln k)^{1/2}, & \mbox{ }q=\infty, \end{matrix}\right.
 \end{equation}
and for all  $r>1$ and all $k\in\N$,
\begin{equation}\label{eq:(2.0.00)}
 d_{\phi_k}(\Lambda^{(2)}U_p,L^q) \ll e^{-\gamma k^r}\left \{ \begin{matrix} k^{(2d-2)(1/p-1/q)}, & \mbox{ }1\leq p\leq2\leq q\leq\infty,
\\ k^{(2d-2)(1/2-1/q)}, & \mbox{ }2\leq p,q\leq\infty, \end{matrix}\right.
 \end{equation}
where $\mathcal{R}$ is the constant given in Theorem \ref{myth2.7}.
\end{myth}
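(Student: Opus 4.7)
The plan is to split the proof into two regimes based on whether $0 < r \leq 1$ (moderate exponential decay, for which the general machinery of Corollary \ref{cor:(3.3.1)} applies) or $r > 1$ (very rapid decay, for which direct truncation at the $k$-th partial sum suffices). In both regimes I would handle $1 \leq p \leq 2$ directly and obtain the case $2 \leq p \leq \infty$ from $U_p \subseteq U_2$, which accounts for the $(1/p-1/2)_+$ in the first assertion and the switch from $(2d-2)(1/p-1/q)$ to $(2d-2)(1/2-1/q)$ in the second.

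For $0 < r \leq 1$, the first step is to define the sequence $\{N_k\}$ as in Remark \ref{rem4.1}, which for $\lambda^{(2)}$ gives the explicit recursion $\gamma N_{k+1}^r - \gamma N_k^r = 1$, hence $N_k^r = N^r + (k-1)/\gamma$ and $\lambda^{(2)}(N_k) = e^{1-k}\lambda^{(2)}(N)$. A calculus estimate then yields $N_{k+1}-N_k \asymp N_k^{1-r}$ (with the appropriate interpretation when $r=1$), and combining this with $d_l \asymp l^{2d-2}$ gives $\theta_{N_k,N_{k+1}} \asymp N_k^{2d-1-r}$. The next step is to verify $\Lambda^{(2)} \in K_{\epsilon,p}$ for some small $\epsilon > 0$: the sum $\sum_{k=1}^M e^{-k(1-\epsilon/2)} \theta_{N_k,N_{k+1}}^{1/p}/\theta_{N_1,N_2}^{1/2}$ splits according to whether $k \leq \gamma N^r$ (where $N_k \asymp N$ so the summands decay geometrically) or $k > \gamma N^r$ (where $N_k \asymp (k/\gamma)^{1/r}$ and the geometric decay $e^{-k(1-\epsilon/2)}$ beats the polynomial growth $k^{(2d-1-r)/(rp)}$). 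This gives the required bound $\ll \theta_{N_1,N_2}^{1/p-1/2}$. Then I would apply Corollary \ref{cor:(3.3.1)} and control the tail $\sigma = \sum_{k>M} e^{-k}\theta_{N_k,N_{k+1}}^{1/p-1/q} \ll 1$ by the same splitting. The main term becomes $\lambda^{(2)}(N)\, \theta_{N_1,N_2}^{1/p-1/2}$, which with $\beta \asymp \phi_N \asymp N^{2d-1}$ translates via $\gamma N^r \approx \mathcal{R}k^{r/(2d-1)}$ (using the constant $D = 2/(d!(d-1)!)$ from Proposition \ref{pr:(3.1.1)}) into the stated bound, with the $q^{1/2}$ or $(\ln k)^{1/2}$ factor coming directly from the two cases in Corollary \ref{cor:(3.3.1)}.

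For $r > 1$, I would simply take the approximating subspace to be $\mathcal{T}_k$, which has dimension $\phi_k$. Then
\begin{equation*}
d_{\phi_k}(\Lambda^{(2)}U_p,L^q) \leq \sup_{\varphi \in U_p} \Bigl\|\sum_{l=k+1}^{\infty}\lambda^{(2)}(l)\, \varphi \ast K_l\Bigr\|_q, \qquad K_l = \sum_{(m,n):|(m,n)|=l}Z_e^{(m,n)}.
\end{equation*}
For $1 \leq p \leq 2 \leq q \leq \infty$, Young's inequality with $1/s = 1+1/q-1/p$ gives $\|\varphi \ast K_l\|_q \leq \|\varphi\|_p \|K_l\|_s$. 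Using $\|K_l\|_2 = (d_l/\omega_d)^{1/2}$, $\|K_l\|_\infty = d_l/\omega_d$, and log-convexity of $L^s$-norms, one gets $\|K_l\|_s \ll d_l^{1/p-1/q} \asymp l^{(2d-2)(1/p-1/q)}$. Since $r > 1$, the tail $\sum_{l=k+1}^{\infty} e^{-\gamma l^r} l^{(2d-2)(1/p-1/q)}$ is dominated by its first term (as $e^{-\gamma (l+1)^r}/e^{-\gamma l^r} \to 0$), yielding the stated bound. The case $2 \leq p,q \leq \infty$ follows by first invoking $U_p \subseteq U_2$.

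The principal obstacle is the verification of the condition $\Lambda^{(2)} \in K_{\epsilon,p}$ with a constant uniform in $N$, since the sum must be controlled both in the regime where $N_k \asymp N$ (dominant contribution reproducing $\theta_{N_1,N_2}^{1/p-1/2}$) and in the regime where $N_k$ grows like $k^{1/r}$ (where one must ensure $e^{-k(1-\epsilon/2)}$ overwhelms the polynomial growth in $k$). A secondary technical point is the passage from the bound at $\beta \asymp N^{2d-1}$ to the bound at a general integer $k$, which requires a monotonicity argument as in the final paragraph of the proof of Theorem \ref{myth2.5}.
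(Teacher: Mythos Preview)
Your proposal is essentially correct and follows the paper's strategy: for $0<r\leq 1$ you verify $\Lambda^{(2)}\in K_{\epsilon,p}$ and apply Corollary~\ref{cor:(3.3.1)}, and for $r>1$ you approximate by $\mathcal{T}_k$ and sum the tail. The paper does the same, invoking Theorem~\ref{2} with $M=0$, $N_{k+1}=N+k$ for the $r>1$ case rather than computing directly. Your splitting of the $K_{\epsilon,p}$ sum at $k\asymp\gamma N^r$ is a valid alternative to the paper's uniform bound $\theta_{N_k,N_{k+1}}/\theta_{N_1,N_2}\leq C e^{\delta' k}$.

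One technical point in your $r>1$ argument deserves attention: the Young inequality $\|\varphi\ast K_l\|_q\leq\|\varphi\|_p\|K_l\|_s$ with $1/s=1+1/q-1/p$ gives $s\in[1,\infty]$ over the full range $1\leq p\leq 2\leq q\leq\infty$, but your log-convexity estimate for $\|K_l\|_s$ interpolates between $L^2$ and $L^\infty$ and therefore only covers $s\geq 2$, i.e.\ $1/p-1/q\geq 1/2$. For $s<2$ (e.g.\ $p=q=2$) the $L^1$ norm of the shell kernel $K_l$ is not $O(1)$. The fix is the two-step route the paper uses inside Theorem~\ref{2}: first bound $\|\varphi\ast K_l\|_2\ll d_l^{1/p-1/2}\|\varphi\|_p$ by interpolating the projection bound with Young at $p=1$, then use the Nikolskii-type inequality $\|t\|_q\leq (d_l/\omega_d)^{1/2-1/q}\|t\|_2$ for $t\in\mathcal{H}_l$. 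This yields $\|\varphi\ast K_l\|_q\ll d_l^{1/p-1/q}\|\varphi\|_p$ for all $1\leq p\leq 2\leq q\leq\infty$, and the rest of your tail estimate goes through unchanged.
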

\begin{proof}
Suppose $0<r\leq 1$ and let $\lambda(x)=e^{-\gamma x^r}$. For $k\in\N$, let $x_k\in\R$, $e\lambda(x_k)=\lambda(N_k)$. Then $x_k=(N_k^r+1/\gamma)^{1/r}$ and $N_{k}<x_{k}\leq N_{k+1}<x_{k}+1$. Therefore
\begin{equation}\label{eq:(4.2.48)}
  (N_k^r+1/\gamma)^{1/r}\leq N_{k+1}\leq (N_k^r+1/\gamma)^{1/r}+1.
 \end{equation}
 We have that
 \begin{eqnarray*}
 N_{k+1}^r&\leq&(N_k^r+1/\gamma)\left[1+r(N_k^r+1/\gamma)^{-1/r}+\frac{r(r-1)}{2!}(N_k^r+1/\gamma)^{-2/r}+\cdots\right]\\
 &\leq&(N_k^r+1/\gamma)\left[1+C_1(N_k^r+1/\gamma)^{-1/r} \right]
 =(N_k^r+1/\gamma)+C_1(N_k^r+1/\gamma)^{1-1/r}
 \end{eqnarray*}
 and since $0<r\leq1$, then $(N_k^r+1/\gamma)^{1-1/r}\leq 1$ and therefore $ N_{k+1}^r\leq N_k^r+C_1$. Thus 
\begin{equation*}\label{eq:(4.2.49)}
 N_{k}^r+\frac{1}{\gamma}\leq N_{k+1}^r\leq N_k^r+C_1
\end{equation*}
and using that $N_1=N$, we show that
\begin{equation}\label{eq:(4.2.50)}
 N^r+\frac{k}{\gamma}\leq N_{k+1}^r\leq N^r+C_1k.
\end{equation}
By (\ref{eq:(4.2.48)}), $N_{k+1}\asymp (N_k^r+1/\gamma)^{1/r}=N_k\left(1+N_k^{-r}/\gamma\right)^{1/r}$, and  then
\begin{equation}
\nonumber N_{k+1}\asymp N_k\left(1+\frac{1}{r\gamma}N_k^{-r}+\frac{1-r}{2r^2\gamma^2}N_k^{-2r}+\frac{(1-r)(1-2r)}{6r^3\gamma^3}N_k^{-3r}+\cdots \right)^{1/r}.
\end{equation}
Thus $ N_{k+1}-N_{k}\asymp N_k^{1-r}$. On the other hand
\begin{equation}\label{eq:(4.2.51)}
 \theta_{N_k,N_{k+1}}\asymp N_{k}^{2d-r-1},
\end{equation}
consequently, from (\ref{eq:(4.2.50)}), (\ref{eq:(4.2.51)}) and for $\epsilon>0$ and $k$ sufficiently large,  we obtain 
\begin{eqnarray*}
 \left(\frac{\theta_{N_k,N_{k+1}}}{\theta_{N_1,N_{2}}}\right)^{r/(2d-r-1)}&\asymp&\left(\frac{N_k^{2d-r-1}}{N^{2d-r-1}}\right)^{r/(2d-r-1)}=N^{-r}N_k^{r}\\
 &\leq&1+C_1kN^{-r}
  \leq 1+C_1k.
\end{eqnarray*}
Hence, for any $\delta>0$, there exists $C_2>0$ such that
\begin{equation*}
 \left(\frac{\theta_{N_k,N_{k+1}}}{\theta_{N_1,N_{2}}}\right)^{r/(2d-r-1)}\leq C_2e^{\delta k}, \hspace{0.3cm}k\geq 0,
\end{equation*}
and taking $\delta'=\delta(2d-r-1)/r$, we get
\begin{equation}\label{eq:(4.2.52)}
 \frac{\theta_{N_k,N_{k+1}}}{\theta_{N_1,N_{2}}}\leq C_2'e^{\delta' k},\hspace{0.3cm}k\geq 0.
\end{equation}
For $1\leq p\leq 2$,
\begin{eqnarray*}
 \displaystyle\sum_{k=1}^{M}e^{-k(1-\epsilon)} \left(\frac{\theta_{N_k,N_{k+1}}}{\theta_{N_1,N_{2}}}\right)^{1/p}\leq(C_2')^{1/p}\displaystyle\sum_{k=1}^{M}e^{-k(1-\epsilon)}e^{\delta' k/p}
 \leq(C_2')^{1/p}\displaystyle\sum_{k=1}^{\infty}e^{-k(1-\epsilon-\delta'/p)}\leq C_3,
\end{eqnarray*}
 where $\epsilon$, $p$ e $\delta$ are chosen satisfying $\epsilon+\delta'/p<1$. Thus 
 \begin{equation*}\label{eq:(4.2.53)}
 \displaystyle\sum_{k=1}^{M}e^{-k(1-\epsilon)} \frac{\theta_{N_k,N_{k+1}}^{1/p}}{\theta_{N_1,N_{2}}^{1/2}}=\displaystyle\sum_{k=1}^{M}e^{-k(1-\epsilon)} \left(\frac{\theta_{N_k,N_{k+1}}}{\theta_{N_1,N_{2}}}\right)^{1/p}\theta_{N_1,N_{2}}^{1/p-1/2}\leq C_3\theta_{N_1,N_{2}}^{1/p-1/2},
\end{equation*}
and then $\Lambda^{(2)}\in K_{2\epsilon,p}$. Therefore it follows from Corollary \ref{cor:(3.3.1)} that 
\begin{eqnarray}\label{eq:(4.2.54)}
\nonumber d_\beta(\Lambda^{(2)}U_p,L^q) &\leq& C_{\epsilon,p}\lambda(N)\theta_{N_1,N_{2}}^{1/p-1/2}\left \{ \begin{matrix} q^{1/2}, & \mbox{ }2\leq q<\infty,
\\\displaystyle\sup_{1\leq k\leq M} (\ln \theta_{N_k,N_{k+1}})^{1/2}, & \mbox{ }q=\infty, \end{matrix}\right.\\
&+&C_{\epsilon,p}\lambda(N)\displaystyle\sum_{k=M+1}^{\infty}e^{-k}\theta_{N_k,N_{k+1}}^{1/p-1/q}.
\end{eqnarray}
Furthermore, by (\ref{eq:(4.2.51)}) and  (\ref{eq:(4.2.52)}) we have that 
\begin{eqnarray*}
 \lambda(N)\displaystyle\sum_{k=M+1}^{\infty}e^{-k}\theta_{N_k,N_{k+1}}^{1/p-1/q}&=& \lambda(N)\theta_{N_1,N_{2}}^{1/p-1/q}\displaystyle\sum_{k=M+1}^{\infty}e^{-k}\left(\frac{\theta_{N_k,N_{k+1}}}{\theta_{N_1,N_{2}}}\right)^{1/p-1/q}\\
 &\leq&C_4 \lambda(N)N^{(2d-r-1)(1/p-1/q)}\displaystyle\sum_{k=M+1}^{\infty}e^{-k}e^{\delta''k}\\
 &=&C_4 \lambda(N)N^{(2d-r-1)(1/p-1/q)}\left( e^{-(1-\delta'')}\right)^{M+1}\frac{1}{1-e^{-(1-\delta'')}}\\
 &\leq&C_6 \lambda(N)N^{(2d-r-1)(1/p-1/q)}\left( e^{-(1-\delta'')}\right)^{M},
\end{eqnarray*}
where $\delta''=\delta'(1/p-1/q)$, $\delta''<1$. We have that $M\asymp \epsilon^{-1}\ln \theta_{N_1,N_{2}}$ and $\theta_{N_1,N_{2}}\asymp N^{2d-r-1}$, and hence
\begin{eqnarray*}
 \lambda(N)\displaystyle\sum_{k=M+1}^{\infty}e^{-k}\theta_{N_k,N_{k+1}}^{1/p-1/q}&\leq&C_6 \lambda(N)N^{(2d-r-1)(1/p-1/q)}\left( e^{-(1-\delta'')}\right)^{C_5(\ln \theta_{N_1,N_{2}})/\epsilon}
 \leq C_7 \lambda(N),
\end{eqnarray*}
if $0<\epsilon<C_5(1-\delta'')/(1/p-1/q)$. Then it follows by (\ref{eq:(4.2.51)}) and (\ref{eq:(4.2.54)}) that
\begin{equation}\label{eq:(4.2.55)}
 d_\beta(\Lambda^{(2)}U_p,L^q)\ll e^{-\gamma N^r}N^{(2d-r-1)(1/p-1/2)}\left \{ \begin{matrix} q^{1/2}, & \mbox{ }2\leq q<\infty,
\\\displaystyle\sup_{1\leq k\leq M} (\ln \theta_{N_k,N_{k+1}})^{1/2}, & \mbox{ }q=\infty. \end{matrix}\right.
\end{equation}
By  (\ref{eq:(4.2.50)}) and (\ref{eq:(4.2.51)}), we get $\theta_{N_k,N_{k+1}}\asymp N_k^{2d-r-1}=\left(N_k^r\right)^{(2d-r-1)/r}\leq (N^r+C_1(k-1))^{(2d-r-1)/r}$ and considering that $M\leq \epsilon^{-1}\ln \theta_{N_1,N_{2}}$, $\theta_{N_1,N_{2}}\asymp N^{2d-r-1}$ and $N^{2d-1}\asymp s$ we get $\ln \theta_{N_k,N_{k+1}}\asymp \ln s$. Therefore by (\ref{eq:(4.2.55)})
\begin{equation*}\label{eq:(4.2.56)}
 d_\beta(\Lambda^{(2)}U_p,L^q)\ll e^{-\gamma N^r}N^{(2d-r-1)(1/p-1/2)}\left \{ \begin{matrix} q^{1/2}, & \mbox{ }2\leq q<\infty,
\\ (\ln s)^{1/2}, & \mbox{ }q=\infty. \end{matrix}\right.
\end{equation*}
From Remark \ref{rem4.1},  (\ref{eq:(89)}) and (\ref{eq:(4.2.51)}), we get 
\begin{equation*}
 \beta=m_0+\displaystyle\sum_{k=1}^M m_k=\displaystyle\sum_{s=0}^M \dim \mathcal{H}_s +\displaystyle\sum_{k=1}^M m_k
 \leq s + C_\epsilon\theta_{N_1,N_2}\leq s +C_8N^{2d-r-1},
\end{equation*}
and since $N^{2d-1}\asymp s$, it follows that $\beta\leq s +C_9s^{1-r/(2d-1)}$ and then
\begin{equation}\label{eq:(4.2.57)}
 d_{[ s +C_9s^{1-r/(2d-1)}]}(\Lambda^{(2)}U_p,L^q)\ll e^{-\gamma N^r}N^{(2d-r-1)(1/p-1/2)}\left \{ \begin{matrix} q^{1/2}, & \mbox{ }2\leq q<\infty,
\\ (\ln s)^{1/2}, & \mbox{ }q=\infty. \end{matrix}\right.
\end{equation}
Let $\mathcal{T}_N=s +C_9s^{1-r/(2d-1)}$. By (\ref{eq:(4.2.12)}) we have $-\gamma N^r\leq -\mathcal{R}s^{r/(2d-1)}+C_{10}s^{(r-1)/(2d-1)}$ and hence $C_{10}s^{(r-1)/(2d-1)}<C_{11}$. Thus
\begin{eqnarray*}
 -\gamma N^r+\mathcal{R}\tau_N^{r/(2d-1)}&\leq&-\mathcal{R}s^{r/(2d-1)}+\mathcal{R}\tau_N^{r/(2d-1)}+C_{11}\\
  &=&-\mathcal{R}\phi_N^{r/(2d-1)}+\mathcal{R}\phi_N^{r/(2d-1)}\left(1+C_9\phi_N^{-r/(2d-1)}\right)^{r/(2d-1)}+C_{11}\\
 &=&\mathcal{R}\phi_N^{r/(2d-1)}\left(-1+\left(1+C_9\phi_N^{-r/(2d-1)}\right)^{r/(2d-1)}\right)+C_{11} .
\end{eqnarray*}
Taking $N$ sufficiently large, we have $|C_9\phi_N^{-r/(2d-1)}|<1$ and hence we obtain
\begin{eqnarray*}
 & &\left(1+C_9\phi_N^{-r/(2d-1)}\right)^{r/(2d-1)}\\
 &=&1+\displaystyle\sum_{k=1}^\infty\frac{\left(r/(2d-1)\right)\left(r/(2d-1)-1\right)\cdots\left(r/(2d-1)-k+1\right)}{k!}\left(C_9\phi_N^{-r/(2d-1)}\right)^k\\
 &=&1+\frac{C_9r}{2d-1}\phi_N^{-r/(2d-1)}S_N,
\end{eqnarray*}
where $0\leq S_N\leq C_r$ and therefore $-\gamma N^r+\mathcal{R}\tau_N^{r/(2d-1)}\leq C_{12}$. 
Let $l\in\N$, $[\tau_N]\leq l\leq[\tau_{N+1}]$. We have $ 1<e^{-\gamma N^r}/e^{-\gamma (N+1)^r}\leq e^{\gamma r}$, so, using  (\ref{eq:(4.2.57)}), we obtain
\begin{eqnarray*}
  d_{l}(\Lambda^{(2)}U_p,L^q)&\leq& d_{[\tau_N]}(\Lambda^{(2)}U_p,L^q)\\
  &\leq& e^{-\gamma N^r}N^{(2d-r-1)(1/p-1/2)}\left \{ \begin{matrix} 1, & \mbox{ }2\leq q<\infty,
\\ (\ln \phi_N)^{1/2}, & \mbox{ }q=\infty, \end{matrix}\right.\\
&\ll& e^{-\gamma (N+1)^r}\left(N^{2d-1}\right)^{(2d-r-1)(1/p-1/2)/(2d-1)}\left \{ \begin{matrix} 1, & \mbox{ }2\leq q<\infty,
\\ (\ln \phi_N)^{1/2}, & \mbox{ }q=\infty, \end{matrix}\right.\\
&\ll& e^{-\mathcal{R}l^r/(2d-1)}l^{(1-r/(2d-1))(1/p-1/2)}\left \{ \begin{matrix} 1, & \mbox{ }2\leq q<\infty,
\\ (\ln l)^{1/2}, & \mbox{ }q=\infty, \end{matrix}\right.,
\end{eqnarray*}
proving (\ref{eq:(2.0.0000)}) for $1\leq p\leq 2\leq q\leq \infty$. 
The case $2\leq p\leq\infty$  follows because $\Lambda U_p \subset\Lambda U_2 $.\\
\indent Now for $r>1$ we will apply the Theorem \ref{2}. We have $-\gamma(k+1)^r+\gamma k^r\leq-\gamma rk^{r-1}$ for $k\geq 1$ and therefore $e\lambda(k+1)\leq e^{-\gamma rk^{r-1}+1}\lambda(k)$.
Let $a\in\N$ such that $e^{-\gamma rk^{r-1}+1}\leq 1$, for all $k\geq a$. Consider $N\geq a$, $N_0=0$, $N_1=N$, $N_{k+1}=N+k$, $M=0$, $\beta=m_0=n=\phi_N$.
Applying Theorem \ref{2} for $1\leq p\leq 2 \leq q\leq \infty$ we obtain
\begin{equation*}
 d_{\phi_N}(\Lambda^{(2)}U_p,L^q)\ll\displaystyle\sum_{k=1}^\infty \lambda(N_k)\theta_{N_k,N_{k+1}}^{1/p-1/q}=\displaystyle\sum_{k=1}^\infty \lambda(N+k-1)\theta_{N_k,N_{k+1}}^{1/p-1/q}.
\end{equation*}
Since $\lambda(N+k)\leq e^{-k}\lambda(N)$, for $1\leq p\leq2\leq q\leq\infty$ we obtain
\begin{eqnarray*}
 d_{\phi_N}(\Lambda^{(2)}U_p,L^q)&\ll&\lambda(N)\displaystyle\sum_{k=1}^\infty e^{-(k-1)}\theta_{N_k,N_{k+1}}^{1/p-1/q}\ll \lambda(N)\displaystyle\sum_{k=1}^\infty e^{-k}\theta_{N_k,N_{k+1}}^{1/p-1/q}\\
 &\ll& \lambda(N)\displaystyle\sum_{k=1}^\infty e^{-k}\left(\dim \mathcal{H}_{N+k}\right)^{1/p-1/q}\\
 &\ll& \lambda(N)\displaystyle\sum_{k=1}^\infty e^{-k}\left((N+k)^{2d-2}\right)^{1/p-1/q}\\
 &\ll& \lambda(N)\displaystyle\sum_{k=1}^\infty e^{-k}(Nk)^{(2d-2)(1/p-1/q)}\\
 &\ll& \lambda(N)N^{(2d-2)(1/p-1/q)}=e^{-\gamma N^r}N^{(2d-2)(1/p-1/q)}. 
\end{eqnarray*}
Now for $2\leq p,q\leq\infty$, we obtain
\begin{equation*}
 d_{\phi_N}(\Lambda^{(2)}U_p,L^q)\leq d_{\phi_N}(\Lambda^{(2)}U_2,L^q)\ll e^{-\gamma N^r}N^{(2d-2)(1/2-1/q)},
\end{equation*}
and this proves (\ref{eq:(2.0.00)}).
\end{proof}

	%
	%
	%
	%
	%
	
\end{document}